\newtheorem{thm}{Theorem}
\newtheorem{lem}{Lemma}
\title{An Extension of Hejhal's Algorithm to Infinite Volume Fundamental Domains}
\author{Alex Karlovitz\footnote{Partially supported by NSF grant DMS-1802119 of Alex Kontorovich}}
\date{}
\begin{document}

\maketitle

\begin{abstract}
This work presents an algorithm for numerically computing Maass forms and their eigenvalues for Fuchsian groups of infinite covolume.
By Patterson-Sullivan theory, this has the added benefit of computing Hausdorff dimensions of the limit sets of these groups.
To approximate Maass forms, we consider their Fourier expansions in different coordinate systems.
To handle infinite volume fundamental domains, we make use of the concept of flare domains.
We also develop theory about Fourier expansions in flare domains which mimics the classical theory on expansions with respect to cusps.
Finally, we present detailed examples of the algorithm applied to symmetric Schottky groups and infinite volume Hecke groups.
\end{abstract}


\section{Introduction}\label{ch:Intro}

\begin{figure}[h!]
	\centering
	\begin{subfigure}{.5\textwidth}
		\centering
		
		\begin{tabular}{|c|c|}
			\hline
			$a_0$ & 1.0 \\
			\hline
			$a_1$ & 1.3915582132 \\
			\hline
			$a_2$ & 1.042527677 \\
			\hline
			$a_3$ & 0.4185419039 \\
			\hline
			$a_4$ & 0.0082837207 \\
			\hline
			$b_0$ & 0.5693837173 \\
			\hline
			$b_1$ & -8.6593303021E-5 \\
			\hline
			$b_2$ & 2.7659692148E-10 \\
			\hline
			$b_3$ & -2.1134025239E-14 \\
			\hline
			$b_4$ & 4.6505587029E-19 \\
			\hline
		\end{tabular}
		\caption{First five cuspidal coefficients ($a_n$'s) and flare coefficients ($b_n$'s) to 10 digits of precision.}
		\label{efunc:table}
	\end{subfigure}%
	\begin{subfigure}{.5\textwidth}
		\centering
		\includegraphics[width=.9\linewidth]{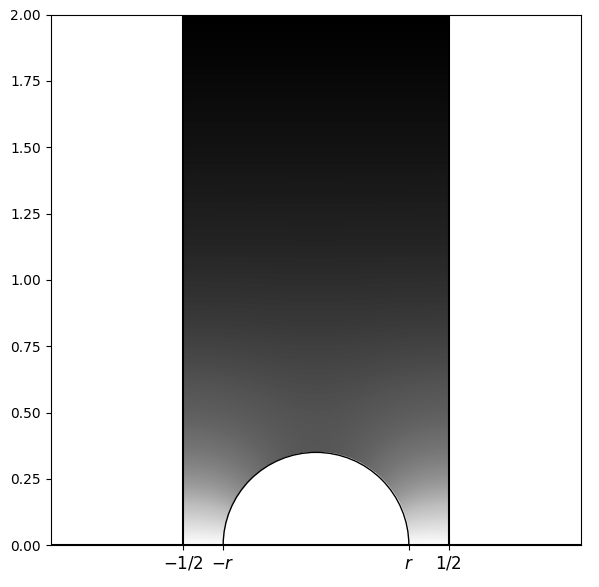}
		\caption{Heat map on a fundamental domain.}
		\label{efunc:fd}
	\end{subfigure}
	\caption{The base eigenfunction for the infinite volume Hecke triangle group with parameter $r = 0.35$.}
	\label{efuncHecke}
\end{figure}

In this paper, we are interested in the spectral analysis of $L^2$ functions on hyperbolic quotient spaces.
Specifically, we consider a Fuchsian group $\Gamma$ acting on the hyperbolic plane $\mathbb{H}$ and attempt to study the associated Maass forms on $L^2(\Gamma\backslash\mathbb{H})$.
Hejhal's algorithm \cite{hejhal1992eigenvalues} allows one to numerically compute the eigenvalues and Fourier coefficients of these automorphic forms.
In its original design, the algorithm has two key requirements for the Fuchsian group:
\begin{enumerate}
	\item $\Gamma$ must have finite covolume
	\item $\Gamma$ must contain a full rank parabolic subgroup
\end{enumerate}
In this paper, we remove both of these requirements.
Instead, we require only that the group be Zariski dense and geometrically finite and that the Hausdorff dimension $\delta$ of the limit set of $\Gamma$ exceeds $1/2$.
(See Appendix \ref{sec:H3} for a discussion of the case of higher dimensional hyperbolic space).

In the infinite covolume case, this work has geometric applications.
According to Patterson-Sullivan theory \cite{patterson1976limit, sullivan1984entropy}, the smallest eigenvalue $\lambda_0$ of the Laplacian on $L^2(\Gamma\backslash\mathbb{H})$ is related to $\delta$ by the formula
$$
\lambda_0 = \delta(1 - \delta)
$$
Since our proposed algorithm efficiently approximates the eigenvalues of the Laplacian, we can use the above formula to also obtain sharp estimates for Hausdorff dimensions of limit sets.

An alternative approach was studied by McMullen, who proposed an algorithm utilizing Markov partitions to compute the Hausdorff dimension of limit sets of Kleinian groups and Julia sets of rational maps \cite{mcmullen1998hausdorff}.
By the above equation, this also effectively computes the smallest eigenvalue of the Laplacian.
McMullen applied his algorithm to infinite volume Hecke groups and symmetric Schottky groups, which are the two main examples discussed in this paper.
The technique used here - making use of Hejhal's algorithm - is able to compute more decimal places and runs somewhat more quickly.
Moreover, Hejhal's algorithm estimates the Fourier coefficients of the Maass form, meaning we obtain an estimate for the \textit{eigenfunction} in addition to the estimated eigenvalue and Hausdorff dimension.
Even further, the algorithm here can potentially be used to find other eigenfunctions and eigenvalues (in the case where any exist \cite{phillips1985spectrum}).

In Figure \ref{efuncHecke}, we show an example of a base eigenfunction computed using the new algorithm (this example is discussed in more detail in Section \ref{ch:Hecke}).
In Section \ref{sec:BaseEfuncs}, we will see that base eigenfunctions are everywhere positive, so the heat map shown in Figure \ref{efunc:fd} is an appropriate tool for visualizing the Maass form.
Note in particular that the function goes to 0 in the infinite volume portion of the fundamental domain.
The eigenfunction here is not a cusp form, and we have normalized it to approach 1 at the cusp.
To get a more detailed view of the heat map, we look at a zoomed-in version in Figure \ref{efunc_zoomed}.
Looking closely, one can spot what appear to be level curves of the eigenfunction running along geodesics perpendicular to the boundaries of the fundamental domain.

\begin{figure}
	\centering
	\includegraphics[width=.9\linewidth]{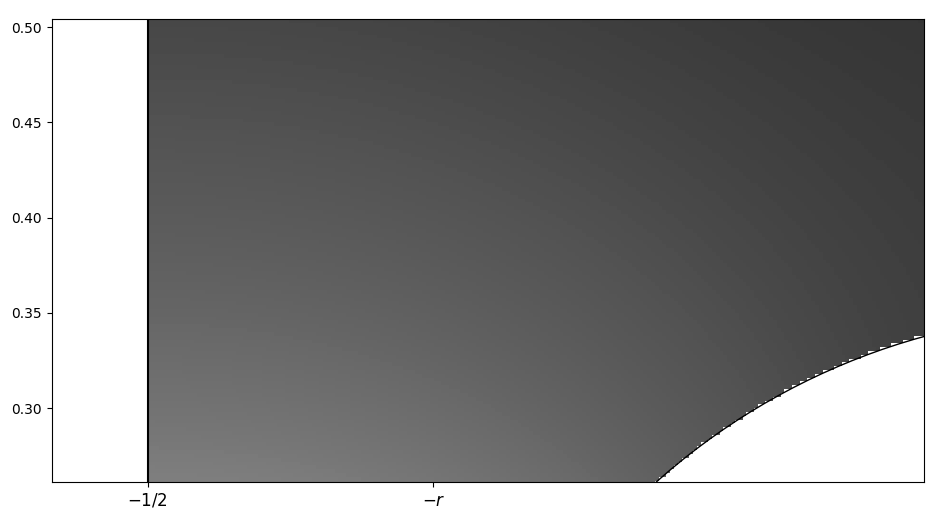}
	\caption{A zoomed-in portion of Figure \ref{efunc:fd}.}
	\label{efunc_zoomed}
\end{figure}

The code used for producing all results and figures in this paper can be found on GitHub at \url{https://github.com/alexkarlovitz/hejhal}.
As seen there, some of the functions were borrowed from Andreas Str\"ombergsson.
In fact, this work was founded on unpublished code and notes by Kontorovich and Str\"ombergsson $\cite{kontorovichUP}$.
See Appendix \ref{ch:Appx_Code} for detailed notes on using the GitHub code.

\subsection{Flare Domains}\label{sec:IntroFlares}

After discussing preliminaries in Section \ref{ch:Prelims}, we begin by describing how Hejhal's algorithm can be adapted to the context of \textit{flare domains} in Section \ref{ch:HejhalInFlare}.

Any Zariski dense Fuchsian group for $\text{SL}(2, \mathbb{R})$ contains an infinite number of hyperbolic elements.
Hyperbolic matrices are diagonalizable over $\text{SL}(2, \mathbb{R})$, and thus we may assume (perhaps after conjugation) that such a Fuchsian group $\Gamma$ contains a diagonal matrix, say
\[
A_\kappa =
\begin{pmatrix}
	\sqrt{\kappa} & 0 \\
	0 & 1/\sqrt{\kappa}
\end{pmatrix}
\]
for some $\kappa > 1$.
Note that the action of this matrix on $\mathbb{H}$ is $z \mapsto \kappa z$.
Thus, a fundamental domain for the action on $\mathbb{H}$ of the (elementary) group generated by \textit{this matrix only} is
$$
\{ z \in \mathbb{H} : 1 < |z| < \kappa \},
$$
and so there exists a fundamental domain for the action of all of $\Gamma$ which is a subset of this half-annulus.

Now assume that a Fuchsian group $\Gamma$ has infinite volume and is finitely generated.
Since any fundamental domain for $\Gamma$ has infinite hyperbolic volume, there must be some fundamental domain whose boundary contains an interval of positive measure on the real line (as opposed to simply a cusp).
In Section \ref{sec:FlareDef}, we show an example in which such a fundamental domain can be conjugated to one with a subset of the form
$$
\{ z \in \mathbb{H} : 1 < |z| < \kappa, \arg z < \alpha \}
$$
We refer to this subset (or its preimage before conjugation) as a \textit{flare}, and we call a fundamental domain containing a flare a \textit{flare domain}.
See Figure \ref{flare_example} for an example.
\begin{figure}
	\centering
	\includegraphics[width=\linewidth]{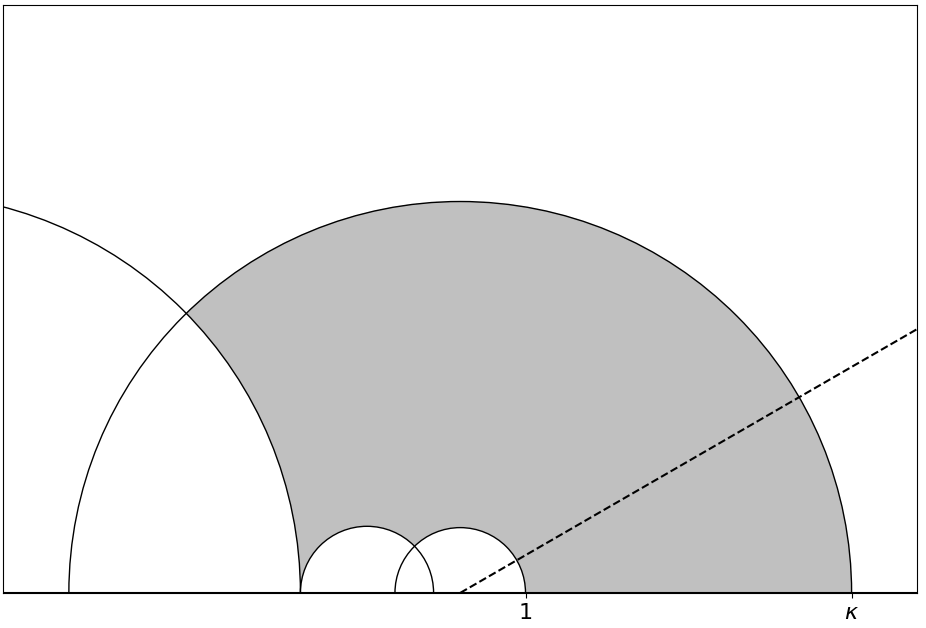}
	\caption{Example flare domain. The dotted line cuts off a flare at angle $\alpha = \pi/6$.}
	\label{flare_example}
\end{figure}
This example is taken from an infinite volume Hecke group, and it is discussed in more detail in Section \ref{ch:Hecke}.

\subsection{Schottky Groups}\label{sec:IntroSchottky}

In Section \ref{ch:Schottky}, we apply our method to some \textit{Schottky groups}, by which we mean groups generated by reflections through a finite set of non-intersecting geodesics in the hyperbolic plane.
In particular, we consider \textit{symmetric} Schottky groups in the disk model of hyperbolic 2-space.
These were studied by McMullen, who gave estimates for the Hausdorff dimension of the limit sets of such groups \cite{mcmullen1998hausdorff}.

Symmetric Schottky groups are a simple example of Schottky groups.
These are obtained by taking 3 circles of the same size spaced symmetrically about the disk.
We parameterize such sets of circles by $\theta$, the angle along the unit circle cut out by one of the circles.
See Figure \ref{pi_over_2} for an example.
\begin{figure}
	\centering
	\begin{subfigure}{.5\textwidth}
		\centering
		\includegraphics[width=.9\linewidth]{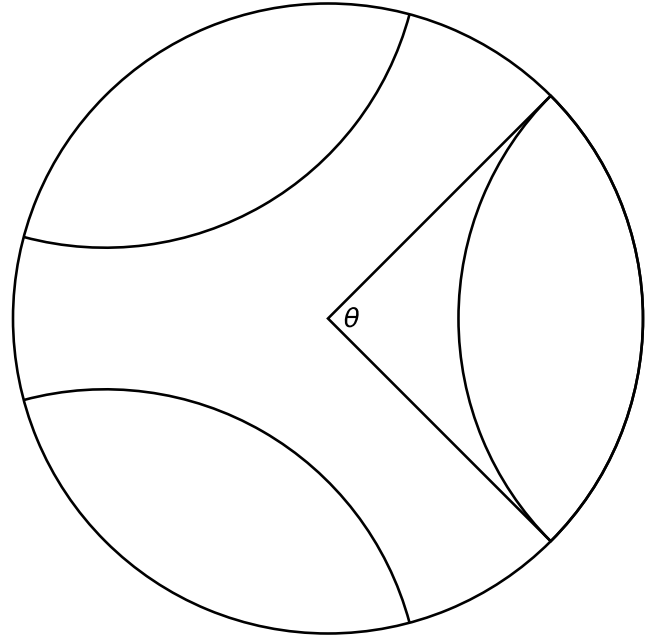}
		\caption{Circles defining the group.}
		\label{schottky:suba}
	\end{subfigure}%
	\begin{subfigure}{.5\textwidth}
		\centering
		\includegraphics[width=.9\linewidth]{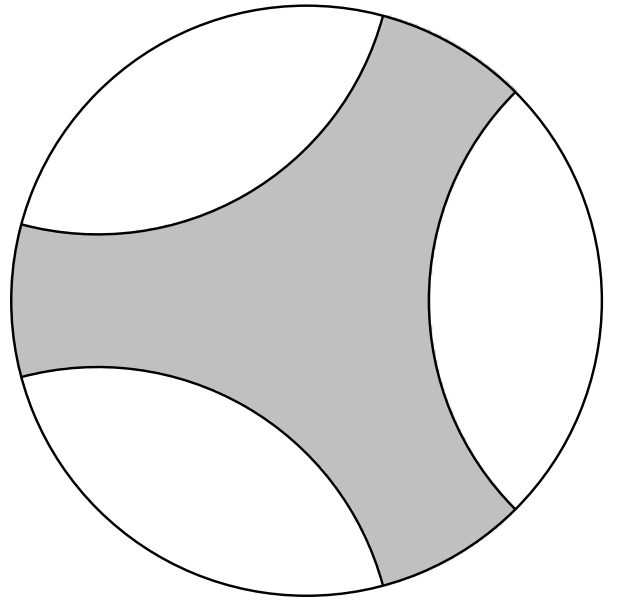}
		\caption{Fundamental domain.}
		\label{schottky:subb}
	\end{subfigure}
	\caption{Example with circles cutting out arcs of angle $\theta = \pi/2$.}
	\label{pi_over_2}
\end{figure}

\subsection{Hecke Groups}\label{sec:IntroHecke}

In Section \ref{ch:Hecke}, we show that our method can still be used in the case where the group in question exhibits both a flare \textit{and a cusp}.
As an example of such groups, we look at infinite covolume \textit{Hecke groups}.
These are groups in the upper half plane model generated by the maps $z \mapsto z + 1$ and $z \mapsto -r^2/z$ for some $r > 0$.

Hecke groups always exhibit a cusp at infinity (because of the shift map).
When $0 < r < 1/2$, they also contain a flare; in other words, this is the condition for which the group has infinite covolume.
One may take the fundamental domain
$$
\mathcal{F} := \left\{ z = x + iy : -\frac{1}{2} < x < \frac{1}{2}, |z| > r \right\}
$$
See Figure \ref{Hecke_orig} for an example.
\begin{figure}
	\centering
	\includegraphics[width=0.7\linewidth]{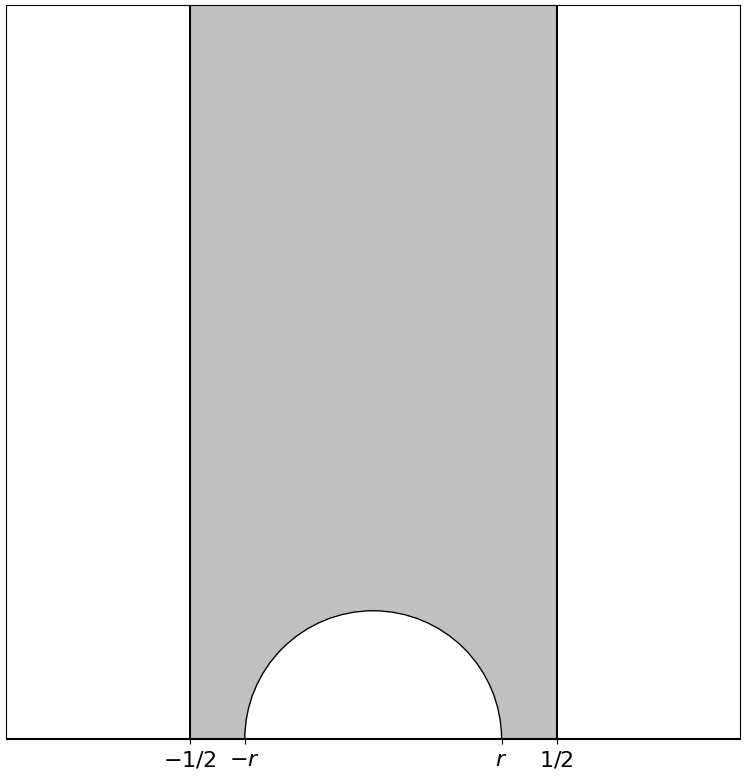}
	\caption{$\mathcal{F}$ for $r = 7/20$}
	\label{Hecke_orig}
\end{figure}

\subsection{Results}

In Section \ref{ch:Results}, we present results in which we compute Maass forms for both symmetric Schottky groups and infinite volume Hecke groups.
We were motivated to study these groups in particular by McMullen, who described an algorithm to compute the Hausdorff dimension of the limit sets of these groups \cite{mcmullen1998hausdorff}.
We also compare our method to McMullen's, as well as discuss how one might verify the results obtained from this work.

In theory, our algorithm applies to an abundance of examples.
In Appendix \ref{ch:Appx_Future}, we present two specific scenarios which would be interesting starting points for further research.

\subsection{Acknowledgments}

This paper is derived from the author's thesis, which was written under the direction of Alex Kontorovich.
The research would not have been possible without Kontorovich and Andreas Str\"ombergsson, whose freely shared notes and code jump-started this work.


\section{Preliminaries}\label{ch:Prelims}

In this section, we begin by describing the setting in which our computations take place.
Then we describe Hejhal's algorithm in its original form.

\subsection{Fuchsian Groups on Hyperbolic 2-Space}\label{sec:FuchsianGroups}

By \textit{hyperbolic 2-space} (or $\mathbb{H}$), we mean the unique simply-connected Riemannian 2-manifold with constant sectional curvature -1.
There are various models of this space, but in this paper we work with the upper half plane model.
Let the set $U^2$ consist of the subset of $\mathbb{C}$ with positive imaginary part
$$
U^2 = \{ x + iy \in \mathbb{C} : y > 0 \}
$$
We endow $U^2$ with the metric
$$
ds^2 = \frac{dx^2 + dy^2}{y^2}
$$
It is well known that $(U^2, ds^2)$ is then isometric to $\mathbb{H}$ (see, e.g. \cite{einsiedler2011ergodic}), so we refer to the metric space as $\mathbb{H}$ from now on.

Next, we note that $\text{PSL}(2, \mathbb{R}) = \text{SL}(2, \mathbb{R})/\{\pm I\}$ acts on $\mathbb{H}$ via M\"obius transformations.
Specifically,
$$
\begin{pmatrix}
	a & b \\
	c & d
\end{pmatrix}(z) =
\frac{az + b}{cz + d}
$$
It is well known that these give all orientation-preserving isometries of hyperbolic 2-space (see, e.g. \cite{iwaniec2021analytic}).
We then define a \textit{Fuchsian group} to be a discrete subgroup of $\text{PSL}(2, \mathbb{R})$.

\subsection{Maass Forms}

Let $\Gamma$ be a Fuchsian group.
Then $f : \mathbb{H} \rightarrow \mathbb{C}$ is a \textit{Maass form} for $\Gamma$ if it is smooth (as a function on $\mathbb{R}^2$), square-integrable on $\Gamma\backslash\mathbb{H}$, invariant under the action of $\Gamma$, and an eigenfunction of the Laplacian.
In other words, $f \in L^2(\Gamma\backslash\mathbb{H}$) must satisfy the following two equations
\begin{equation}\label{invariance}
	f(\gamma z) = f(z) ~\forall~ \gamma \in \Gamma, z \in \mathbb{H}
\end{equation}
\begin{equation}\label{efunc}
	\Delta f(z) = \lambda f(z) ~\text{for some}~ \lambda \in \mathbb{C}
\end{equation}
In the upper half plane model of hyperbolic 2-space, the Laplace operator is given by
\begin{equation}\label{deltaUHP}
	\Delta = -y^2\left( \frac{\partial^2}{\partial x^2} + \frac{\partial^2}{\partial y^2} \right)
\end{equation}

Now suppose $\Gamma$ exhibits a cusp (which is the scenario originally considered by Hejhal).
We can always conjugate the group to position the cusp at infinity with width 1, i.e. we may assume that $\Gamma$ contains the matrix
$\begin{psmallmatrix}
	1 & 1 \\
	0 & 1
\end{psmallmatrix}$.
In this case, a Maass form exhibits a very specific Fourier expansion in its real variable; in particular, the coefficients are \textit{independent} from the imaginary part.

\begin{lem}\label{cusp_lemma}
	Let $\Gamma$ be a Fuchsian group containing the matrix
	$\begin{psmallmatrix}
		1 & 1 \\
		0 & 1
	\end{psmallmatrix}$, and let $f$ be a Maass form for $\Gamma$ with exceptional Laplacian eigenvalue $\lambda = \frac{1}{4} - \nu^2, \nu > 0$.
	Then $f$ has a Fourier expansion of the following form
	\begin{equation}\label{cusp_fourier}
		f(x + iy) = a_0y^{\frac{1}{2} - \nu} + \sum_{n\neq0}a_n\sqrt{y}K_\nu(2\pi|n|y)e(nx)
	\end{equation}
	where $K_\nu$ denotes the $K$-Bessel function.
\end{lem}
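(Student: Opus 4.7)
The plan is to use the classical separation-of-variables argument: turn the $\mathbb{Z}$-periodicity coming from the cusp into a Fourier expansion in $x$, plug into the Laplace eigenvalue equation to obtain an ODE for each Fourier coefficient (as a function of $y$), solve the ODE, and then discard the exponentially growing solutions using the $L^2$ integrability of $f$ in a cuspidal neighborhood.

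First, because $\begin{psmallmatrix} 1 & 1 \\ 0 & 1 \end{psmallmatrix} \in \Gamma$ acts by $z \mapsto z+1$, invariance (\ref{invariance}) gives $f(x+1+iy) = f(x+iy)$. For each fixed $y > 0$ the smooth function $x \mapsto f(x+iy)$ is $1$-periodic, so it has a Fourier expansion
\[
f(x+iy) = \sum_{n \in \mathbb{Z}} c_n(y)\, e(nx), \qquad c_n(y) = \int_0^1 f(x+iy)\, e(-nx)\,dx,
\]
with smooth coefficients $c_n(y)$. I would justify term-by-term differentiation of this series by smoothness of $f$ and standard decay of Fourier coefficients of a smooth periodic function.

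Second, apply $\Delta$ from (\ref{deltaUHP}) termwise and equate to $\lambda f = (\tfrac14 - \nu^2) f$. For each $n$ this produces the ODE
\[
-y^2\, c_n''(y) + 4\pi^2 n^2 y^2\, c_n(y) = \bigl(\tfrac{1}{4} - \nu^2\bigr)\, c_n(y).
\]
For $n=0$ this is the Euler equation $y^2 c_0''(y) + (\tfrac14 - \nu^2)\, c_0(y) = 0$, with general solution $c_0(y) = a_0\, y^{1/2 - \nu} + a_0'\, y^{1/2 + \nu}$. For $n \neq 0$, substitute $c_n(y) = \sqrt{y}\, g_n(2\pi |n| y)$; a direct computation turns the ODE into the modified Bessel equation of order $\nu$, so $g_n$ is a linear combination of $K_\nu$ and $I_\nu$, giving $c_n(y) = a_n \sqrt{y}\, K_\nu(2\pi|n|y) + a_n' \sqrt{y}\, I_\nu(2\pi|n|y)$.

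Third — and this is the step where one has to be slightly careful — use the $L^2$ hypothesis to kill off the $a_0'$ and $a_n'$ terms. On the cuspidal strip $S_Y = \{ x+iy : 0 < x < 1,\ y > Y\}$ (for $Y$ large enough that $S_Y$ injects into $\Gamma\backslash\mathbb{H}$), Parseval gives
\[
\int_{S_Y} |f|^2 \, \frac{dx\,dy}{y^2} = \sum_n \int_Y^\infty |c_n(y)|^2 \, \frac{dy}{y^2}.
\]
Since $I_\nu(t) \sim e^t/\sqrt{2\pi t}$ as $t \to \infty$, the $I_\nu$ contribution for any $n \neq 0$ forces the $y$-integral to diverge unless $a_n' = 0$. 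For $n = 0$, $\int_Y^\infty y^{1+2\nu}\,\frac{dy}{y^2} = \int_Y^\infty y^{2\nu-1}\,dy$ diverges for $\nu > 0$, so $a_0' = 0$ as well. Assembling the surviving pieces yields exactly (\ref{cusp_fourier}).

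The main obstacle is the termwise application of $\Delta$ and the termwise use of Parseval in the $L^2$ argument; both are justified once one knows $f$ is smooth and the Fourier coefficients of a smooth $1$-periodic function decay faster than any polynomial in $n$, which makes the relevant sums absolutely and uniformly convergent on compact sets in $y$. Everything else is a routine ODE computation.
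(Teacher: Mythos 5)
Your proof is correct and follows essentially the same route as the paper's: Fourier expansion in $x$ from the periodicity, separation of variables into an ODE for each coefficient, and the $L^2$ condition on a cuspidal neighborhood to discard the $y^{1/2+\nu}$ and $I_\nu$ solutions. You simply spell out the ODEs and the Parseval step that the paper delegates to a reference.
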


\begin{proof}
	By Equation \ref{invariance} $f(z + 1) = f(z)$, and so the Maass form has a Fourier expansion in its real variable
	$$
	f(x + iy) = \sum_{n=-\infty}^{\infty}f_n(y)e(nx)
	$$
	Next we apply Equation \ref{efunc} to the Fourier expansion.
	By uniqueness of Fourier coefficients, we find that
	$$
	\Delta(f_n(y)e(nx)) = \lambda f_n(y)e(nx)
	$$
	for every $n$.
	A straightforward computation (see, e.g. \cite{elstrodt2013groups}) shows that
	$$
	f_0(y) = y^{\frac{1}{2} - \nu} ~~~ f_n(y) = \sqrt{y}K_\nu(2\pi|n|y) ~ (n \neq 0)
	$$
	(The notation $e(z)$ is commonly used to denote $e^{2\pi iz}$).
	In particular, one uses the fact that $f \in L^2(\Gamma\backslash\mathbb{H})$ to rule out other solutions to the second-order differential equations.
	These solutions include the $I$-Bessel function (for $n \neq 0$) and $y^{1/2 + \nu}$ (for $n = 0$), both of which grow too quickly at the cusp for $f$ to be square-integrable.
\end{proof}

So far, we have only assumed that $\lambda$ is complex-valued.
In fact, we can say much more about the possible values.
By applying integration by parts, one can show that the hyperbolic Laplacian is a positive semidefinite operator with respect to the Petersson inner product on $L^2(\Gamma\backslash\mathbb{H})$.
Thus, if $f \in L^2(\Gamma\backslash\mathbb{H})$ is an eigenfunction of the Laplacian with eigenvalue $\lambda = 1/4 - \nu^2$, we must have that $\lambda$ is real and nonnegative.
This can only occur if $\nu = it$ for $t \in \mathbb{R}$ or if $\nu \in \mathbb{R}$ with $|\nu| < 1/2$.
If $\nu$ is real and nonzero, then $\lambda < 1/4$ is called an \textit{exceptional} eigenvalue.

\subsection{Hejhal's Algorithm}\label{sec:HejhalOriginal}

We will now introduce Hejhal's algorithm in the case where the Fuchsian group $\Gamma$ has finite volume and exhibits a cusp at infinity.
As in the previous section, we assume that the cusp has width 1, i.e. $\Gamma$ contains the matrix
$\begin{psmallmatrix}
	1 & 1 \\
	0 & 1
\end{psmallmatrix}$.
A lovely description of the algorithm - along with some technical implementation details - can be found in \cite{booker2006effective}.
This work presents the algorithm along with a computational trick which involves taking a linear combination of the Maass form evaluated at test points along a closed horocycle (this trick was introduced in \cite{hejhal1999eigenfunctions}).
For simplicity, we will present the topic in the manner of Hejhal's original work \cite{hejhal1992eigenvalues}.

First, we consider some high-level pseudocode describing the algorithm.
Then, we give a more detailed explanation of the various parts of the code.
When viewing the pseudocode, recall that we are looking to approximate the eigenvalue $\lambda$ and the Fourier coefficients of a Maass form $f$.
For now, we assume $\lambda$ is not exceptional; that is, $\lambda = 1/4 + t^2$ for some $t \geq 0$.
\begin{verbatim}
	def hejhal(zs1, zs2, zs1_pb, zs2_pb, ts) :
	repeat :
	for t in ts :
	as1[t] = approx_coefficients(zs1, zs1_pb, t)
	as2[t] = approx_coefficients(zs2, zs2_pb, t)
	ts = get_new_ts(as1, as2)
	if STOPPING_CONDITION :
	return mean(ts)
\end{verbatim}
In the code above, \texttt{zs1} and \texttt{zs2} are lists of test points not in some fixed fundamental domain, \texttt{zs1\_pb} and \texttt{zs2\_pb} are lists of the pullbacks of \texttt{zs1} and \texttt{zs2} to that fundamental domain, and \texttt{ts} is a list of initial guesses for $t$.

\textbf{Approximating Coefficients.}
Given a guess for the eigenvalue, we set up and solve a linear system for the first $2M + 1$ Fourier coefficients \textit{assuming our guess is correct}.
Each test point $z$ and pullback $z_*$ give one equation as follows.
Since $f$ is invariant under the group action, $f(z) = f(z_*)$.
We approximate $f$ by cutting off the Fourier expansion \ref{cusp_fourier} to be a sum over $|n| \leq M$.
\begin{equation}
	a_0y^{\frac{1}{2} - \nu} + \sum_{\substack{n\neq0 \\ |n| \leq M}}a_n\sqrt{y}K_{it}(2\pi|n|y)e(nx) \approx \\
	a_0y_*^{\frac{1}{2} - \nu} + \sum_{\substack{n\neq0 \\ |n| \leq M}}a_n\sqrt{y_*}K_{it}(2\pi|n|y_*)e(nx_*)
\end{equation}
Since we have made a guess for $t$, the only unknowns here are the $a_n$'s.
So long as we choose more than $2M + 1$ test points, we will end up with an overdetermined linear system.
We apply least squares to the system to get our approximate Fourier coefficients.

\textbf{Updating Eigenvalue Guesses.}
There are various ways to handle the function \texttt{get\_new\_rs}.
In his original work, Hejhal proposed a straightforward \textit{grid search}.
In this method, one begins with a coarse set of $t$-values, then chooses to search near the value which gives the smallest $L^2$ distance between the approximated coefficients.
To clarify, let $T$ be the set of guesses for $t$ at a fixed step in the algorithm, and write $a_1(t)$ and $a_2(t)$ for the vector of Fourier coefficients approximated using the two sets of test points at a specific $t$ value.
Then, define $t_{best}$ by
$$
t_{best} = \operatorname*{argmin}_{t \in T} ||a_1(t) - a_2(t)||_2
$$
We obtain a new set of $t$ values by taking equally spaced points centered around $t_{best}$; at each step, we take a finer and finer grid, effectively ``zooming in'' on the correct $t$ value.

Another technique (which we adopt in our code) is to utilize the \textit{secant method}.
This starts with just two guesses: $t$ and $t + \delta$ (assume $\delta > 0$).
Consider the guess $t$ first: we approximate the two sets of Fourier coefficients $a_1(t)$ and $a_2(t)$ as before.
We then compute the difference between the coefficients at $\ell$ specific indices (we arbitrarily take $\ell = 4$ in our code):
$$
c_j(t) = a_1(t)[i_j] - a_2(t)[i_j], ~~~ j = 1, \dots, \ell
$$
We then do the same for the guess $t + \delta$.
Note that $c_j(t)$ will be very close to $0$ if $\lambda_t = 1/4 + t^2$ is very close to a true eigenvalue.
Thus, we employ the secant method to search for a zero of the function $c_j$:
$$
t_j = \frac{c_j(t)(t + \delta) - c_j(t + \delta)t}{c_j(t) - c_j(t + \delta)}
$$
Note that the above equation forms $\ell$ new guesses. (The heuristic reason for this is that one of $c_j$'s may be a bad indicator of how good a guess $t$ is for some random reason; it is less likely that all $\ell$ $c_j$'s are bad).
We proceed by setting
\begin{itemize}
	\item $t_{\text{max}} = \max\{t_j\}, t_{\text{min}} = \min\{t_j\}, t_{\text{mid}} = (t_{\text{max}} + t_{\text{min}})/2$
	\item $\delta_{\text{new}} = 5(t_{\text{max}} - t_{\text{min}})$
\end{itemize}
Then we repeat the above process with the new guesses $t_{\text{mid}}$ and $t_{\text{mid}} + \delta_{\text{new}}$.

\textbf{Stopping Conditions.}
The stopping condition depends on the method used for updating the eigenvalue guesses.
In general, the condition should quantify our expectation that we have stopped improving with each guess.
For grid search, we stop when the minimum $L^2$ distance between approximated coefficients at a given step is no smaller than the minimum distance from the previous step.
When using the secant method, we simply check if $\delta_{\text{new}} > \delta/2$.
If so, we don't expect to improve by taking any more steps, so we make our final guess $t_{\text{mid}}$.

\subsection{Choosing Test Points}\label{sec:TestPointsCusp}

In theory, any set of points $\{z_i\} \subset \mathbb{H}$ can be taken as the test points for Hejhal's algorithm, so long as none lie within our chosen fundamental domain.
In practice, however, one finds that certain choices lead to faster convergence.
One such choice is to take our test points along a low-lying horocycle.

Consider the example $\Gamma = \text{SL}(2, \mathbb{Z})$ with the standard fundamental domain
$$
\mathcal{F} = \left\{ z \in \mathbb{H} : |\text{Re}(z)| \leq \frac{1}{2}, |z| \geq 1 \right\}
$$
Recall that horocycles in the upper half-plane model of hyperbolic 2-space are given by horizontal lines.
Thus, a closed horocycle for $\Gamma\backslash\mathbb{H}$ is given by the set
$$
\{ x + iy_0 : -1 \leq x \leq 1 \}
$$
for fixed $y_0 \geq 1$.
However, as we let $y_0$ decrease past 1, this set of points intersects more and more fundamental domains.
See Figure \ref{ed_sl2z}.
\begin{figure}
	\centering
	\includegraphics[width=0.7\linewidth]{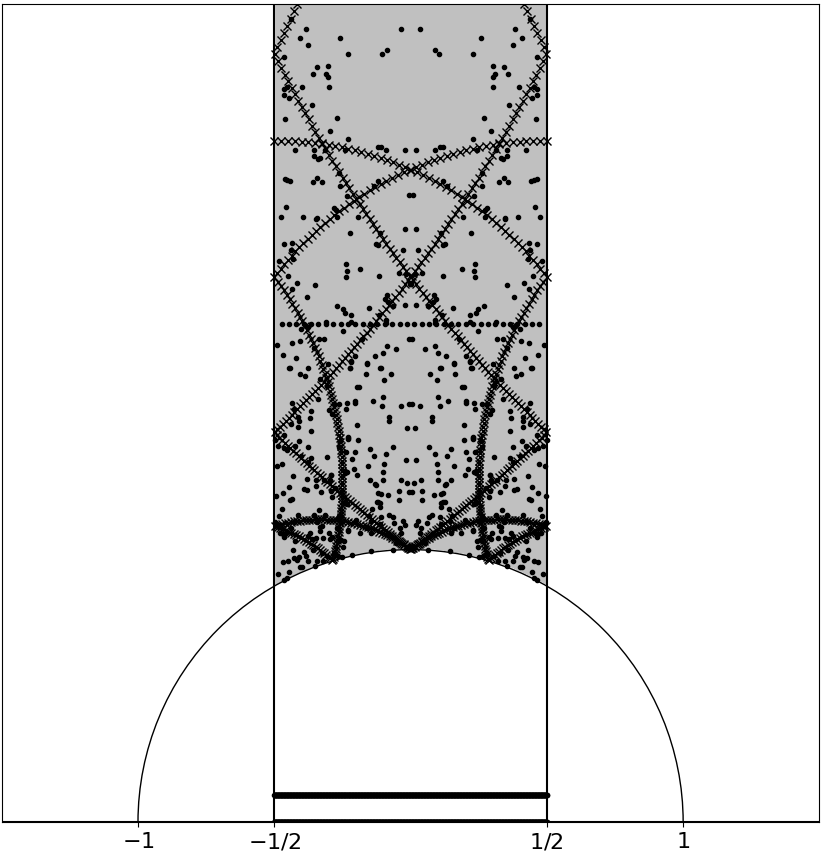}
	\caption{Test points at low-lying horocycles of heights $10^{-1}$ and $10^{-3}$ and their pullbacks given by $\times$'s and dots, resp. Note that the pullbacks of the lower horocycle distribute more evenly about the fundamental domain.}
	\label{ed_sl2z}
\end{figure}

The example above shows one advantage of our choice of test points; namely, the equidistribution of low-lying horocycles.\footnote{the equidistribution result is shown for $\text{SL}(2, \mathbb{Z})$ in \cite{zagier1981eisenstein} and more generally in \cite{sarnak1981asymptotic}}
By choosing a set of points which intersects a large number of fundamental domains, we ensure that a variety of group elements of $\Gamma$ will be involved in pulling our test points back to the fundamental domain.
In other words, the linear system we are setting up for Hejhal's algorithm will include rich \textit{group information} about $\Gamma$.

Another consideration when choosing test points is the convergence of Fourier series.
Recall from Lemma \ref{cusp_lemma} that we are considering Maass forms with a cuspidal expansion
\begin{equation}\label{just_the_series}
	a_0y^{\frac{1}{2} - \nu} + \sum_{n\neq0}a_n\sqrt{y}K_\nu(2\pi|n|y)e(nx)
\end{equation}
It is known that the $K$-Bessel function decays exponentially as its argument goes to infinity (see, e.g., \cite{NIST:DLMF} section 10.30), and thus the entire Fourier coefficient $a_n\sqrt{y}K_\nu(2\pi|n|y)$ decays exponentially with $n$.\footnote{we show in section \ref{sec:Asymptotics} that the $a_n$'s do not grow fast enough to invalidate this statement}
However, this decay is faster for \textit{larger} $y$.
In particular, as $y \rightarrow 0$, we need more and more Fourier coefficients in \ref{just_the_series} to approximate the infinite sum.
This leads us to introduce the notion of \textit{admissibility}.

Consider the same set up as in Lemma \ref{cusp_lemma}, and fix a bound $y_0 > 0$.
We say that a point $z \in \mathbb{H}$ is \textit{admissible with respect to the expansion} \ref{cusp_fourier} if $\text{Im}(z) \geq y_0$.
Since the $K$-Bessel function decays exponentially as its argument grows, $K_\nu(2\pi|n|\text{Im}(z))$ decays at least as fast as $K_\nu(2\pi|n|y_0)$ for any admissible point $z$.

We make one final note about test points in the presence of a cusp.
It turns out that computing the $K$-Bessel function to high accuracy for \textit{large} argument is computationally expensive.
So one sees that there is a trade-off between efficient approximation of the Fourier series (slow when $y$ is small) and efficient computation of $K_\nu(2\pi|n|y)$ (slow when $y$ is large).
For a more in-depth discussion on this, see Section 2 of \cite{booker2006effective}.
We adopt some of their methods in our code.

\section{Hejhal's Algorithm in a Flare Domain}\label{ch:HejhalInFlare}

A main contribution of this work is to remove the requirement of a cuspidal expansion in order to apply Hejhal's algorithm.
In place of this, we require only that the group is finitely generated and non-elementary.
This leads to what we call a \textit{flare expansion}, which is a Fourier expansion of the Maass form in polar coordinates.
In this section, we work out the details of this expansion and describe the modifications needed to apply it in Hejhal's algorithm.

\subsection{Conjugating to a Flare Domain}\label{sec:FlareDef}

In Section \ref{sec:IntroFlares}, we claimed that infinite volume Fuchsian groups which are finitely generated and Zariski dense can be conjugated by a matrix in $\text{SL}(2, \mathbb{R})$ so that the new group admits a fundamental domain containing a flare.
In this section, we show a simple method for finding such a matrix.
This method presupposes the existence of a hyperbolic matrix in the group whose axis\footnote{Recall that the \textit{axis} of a hyperbolic matrix is the geodesic whose endpoints are the fixed points of the matrix.} cuts off the flare at right angles.
This condition is satisfied for both symmetric Schottky groups and infinite volume Hecke groups, so the method presented below is widely applicable.

\begin{lem}
	Let $\Gamma$ be an infinite volume Fuchsian group.
	Let $\mathcal{F}$ be a fundamental domain for $\Gamma$, and suppose it contains an infinite volume portion bordering the real axis.
	Suppose further that $\Gamma$ contains a matrix $A$ whose axis intersects the two geodesics at either side of this infinite volume portion at right angles (see Figure \ref{pre_flare}).
	Then there exists a matrix $U \in \text{SL}(2, \mathbb{R})$ such that $U\Gamma U^{-1}$ admits a flare domain.
	In particular, $U$ may be chosen so that $U\mathcal{F}$ is a flare domain for $U\Gamma U^{-1}$.
\end{lem}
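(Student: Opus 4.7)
The plan is to build $U$ in two stages: first conjugate so that $A$ becomes the standard dilation $A_\kappa$ (so its axis lands on the positive imaginary axis), and then rescale radially so that the inner of the two ``side'' geodesics becomes the unit semicircle. The key geometric input is that Möbius transformations are conformal, so perpendicularity of geodesics is preserved, and that the only geodesics in $\mathbb{H}$ perpendicular to the imaginary axis are Euclidean semicircles centered at the origin.

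In more detail, I would first exploit the fact that $A$ is hyperbolic to extract its two distinct real fixed points $\xi^\pm \in \mathbb{R}\cup\{\infty\}$, then write down explicitly (via a cross-ratio-style formula, with a scalar adjustment to enforce $\det = 1$) a matrix $U_1 \in \mathrm{SL}(2,\mathbb{R})$ sending $\xi^- \mapsto 0$ and $\xi^+ \mapsto \infty$. Then $U_1 A U_1^{-1}$ fixes $\{0, \infty\}$, so it is diagonal of the form $A_\kappa = \mathrm{diag}(\sqrt{\kappa}, 1/\sqrt{\kappa})$, and by replacing $A$ with $A^{-1}$ if necessary I may assume $\kappa > 1$. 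The axis of $A$ becomes the positive imaginary axis. Next I consider the two side geodesics $\gamma_1, \gamma_2$ of the infinite volume portion of $\mathcal{F}$; by the hypothesis they are perpendicular to the axis of $A$, so conformality of $U_1$ forces $U_1\gamma_i$ to be perpendicular to the imaginary axis, hence of the form $\{|z|=r_i\}$ for some $0 < r_1 < r_2$. Finally, taking $U_2 = \mathrm{diag}(1/\sqrt{r_1}, \sqrt{r_1})$, which commutes with $A_\kappa$ and acts as $z \mapsto z/r_1$, and setting $U := U_2 U_1$, the inner geodesic becomes $\{|z|=1\}$ and the outer becomes $\{|z|=\kappa'\}$ with $\kappa' := r_2/r_1 > 1$.

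At this stage $U\mathcal{F}$ is a fundamental domain for $U\Gamma U^{-1}$, and the infinite volume portion has been mapped into the half-annulus $\{1 < |z| < \kappa'\}$, with its boundary on $\partial\mathbb{H}$ being an interval of the real axis lying between the feet of the two normalized side geodesics. To produce the flare, I pick $\alpha > 0$ small enough that the sector $\{z : 1 < |z| < \kappa', 0 < \arg z < \alpha\}$ is contained in $U\mathcal{F}$; this is exactly a flare as defined in Section \ref{sec:IntroFlares}.

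The step I expect to be the main obstacle is the last one: verifying that for some genuinely positive $\alpha$ the sector actually sits inside $U\mathcal{F}$ rather than being clipped by some other piece of $\partial(U\mathcal{F})$ dipping down to the real axis. This is where the hypothesis that the infinite volume portion borders the real axis along a nondegenerate interval is essential: near any interior point of that boundary interval, $U\mathcal{F}$ is open and contains a full half-disk neighborhood in $\mathbb{H}$, and a compactness argument along the closed interval $[1,\kappa'] \subset \partial\mathbb{H}$ produces a uniform $\alpha > 0$. Modulo this geometric check, the remainder of the proof is routine bookkeeping about the explicit matrices $U_1$ and $U_2$.
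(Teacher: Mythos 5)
Your proposal is correct and follows essentially the same route as the paper: send the fixed points of $A$ to $0$ and $\infty$, use conformality to conclude the perpendicular side geodesics become semicircles centered at the origin, and normalize the inner one to radius $1$ (the paper packages your $U_1$ and $U_2$ into a single cross-ratio formula with $U(t)=1$, but the content is identical). Your extra care about finding a genuinely positive cutoff angle $\alpha$ is a point the paper glosses over, handling it only via the figure caption's standing assumption that no other walls of $\mathcal{F}$ meet the real axis between the feet of the two side geodesics.
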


\begin{proof}
	As shown in Figure \ref{pre_flare}, we label the endpoints of the axis of $A$ $z_1$ and $z_2$, $z_1 < z_2$, and we call the rightmost point of the first geodesic $t$.
	\begin{figure}[h]
		\centering
		\includegraphics[width=0.9\linewidth]{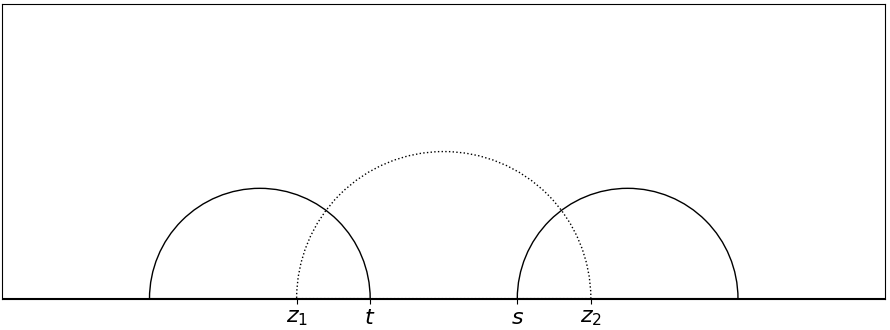}
		\caption{The infinite portion of $\mathcal{F}$. Note that we are assuming no other walls of $\mathcal{F}$ have an endpoint on the real axis between $t$ and $s$.}
		\label{pre_flare}
	\end{figure}
	Now define the M\"obius transformation
	$$
	U(z) = \left( \frac{t - z_2}{t - z_1} \right) \frac{z - z_1}{z - z_2}
	$$
	This function is chosen so that
	$$
	U(z_1) = 0 ~~~~~ U(z_2) = \infty ~~~~~ U(t) = 1
	$$
	Therefore, the axis of $A$ is mapped to the imaginary axis.
	Moreover, since the two bounding geodesics met this axis at right angles, their images under $U$ must meet the imaginary axis at right angles.
	This is only possible if they are centered at the origin, and so we obtain the flare domain shown in Figure \ref{post_flare}.
	\begin{figure}[h]
		\centering
		\includegraphics[width=0.9\linewidth]{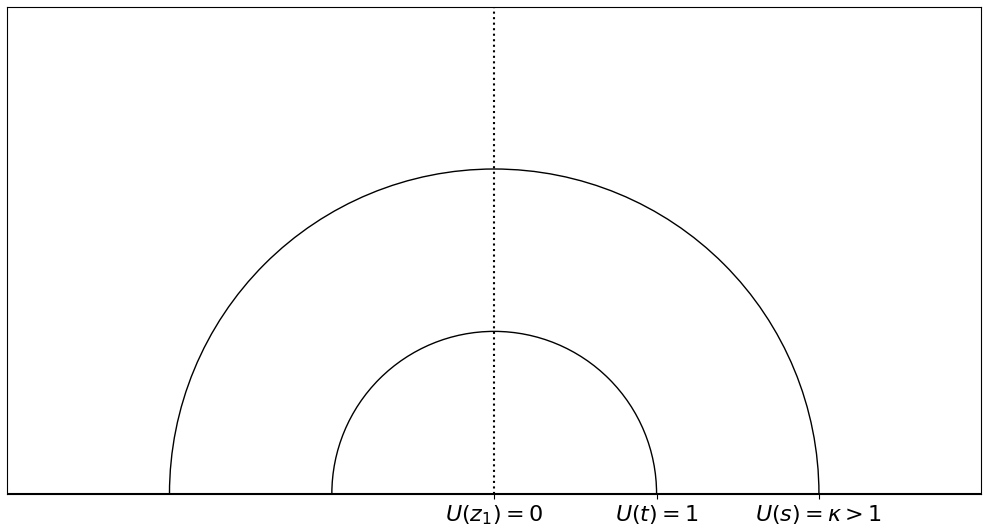}
		\caption{A flare domain.}
		\label{post_flare}
	\end{figure}
	Note that $z_1 < t < s < z_2$ implies that $U(s) > 1$, and so the figure is accurate.
\end{proof}

\subsection{Fourier Expansion in a Flare}

Just as in the cuspidal case, a Maass form exhibits a very specific Fourier expansion in a flare.
In this case, the expansion is in terms of the radial variable $r$, and the coefficients are \textit{independent} from the angle $\theta$.
(This result is shown elsewhere - for example, see Chapter 5 in \cite{MR2710911} - but we include our own proof here for clarity).

\begin{lem}\label{flare_lemma}
	Let $\Gamma$ be an infinite volume Fuchsian group containing the matrix
	$\begin{psmallmatrix}
		\sqrt{\kappa} & 0 \\
		0 & \sqrt{\kappa}^{-1}
	\end{psmallmatrix}$ for some $\kappa > 1$ and which admits a flare domain of width $\kappa$.
	Let $\phi$ be a Maass form for $\Gamma$ with Laplacian eigenvalue $\lambda = \frac{1}{4} - \nu^2$.
	Then $\phi$ has a Fourier expansion in polar coordinates of the following form
	\begin{equation}\label{flare_fourier}
		\phi(r, \theta) = \sum_{n\in\mathbb{Z}}b_n\sqrt{\sin\theta}P^{-\nu}_{\mu_n}(\cos\theta)e\left( n\frac{\log r}{\log \kappa} \right)
	\end{equation}
	where
	$$
	\mu_n = -\frac{1}{2} + \frac{2\pi in}{\log \kappa}
	$$
	and $P_\mu^{-\nu}$ is the associated Legendre function of the first kind.
\end{lem}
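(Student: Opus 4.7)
The plan is to mirror the proof of Lemma \ref{cusp_lemma}, replacing the shift $z\mapsto z+1$ by the dilation $z\mapsto\kappa z$ induced by the given diagonal matrix, and working throughout in polar coordinates. First, invariance of $\phi$ under the dilation gives $\phi(\kappa r, \theta) = \phi(r, \theta)$, so for each fixed $\theta$ the function $u \mapsto \phi(e^u, \theta)$ is periodic in $u = \log r$ with period $\log\kappa$. Fourier expansion in this variable then yields
\begin{equation*}
\phi(r,\theta) = \sum_{n\in\mathbb{Z}} c_n(\theta)\, e\!\left( n \tfrac{\log r}{\log\kappa} \right).
\end{equation*}

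Next, I would rewrite the Laplacian in polar coordinates. Starting from Equation \ref{deltaUHP} and substituting $x = r\cos\theta$, $y = r\sin\theta$, a direct chain-rule computation yields $\Delta = -\sin^2\theta\,(r^2\partial_r^2 + r\partial_r + \partial_\theta^2)$. Setting $s_n = 2\pi in/\log\kappa = \mu_n + \tfrac{1}{2}$ and using $(r^2\partial_r^2 + r\partial_r)r^{s_n} = s_n^2 r^{s_n}$, the eigenfunction equation $\Delta\phi = \lambda\phi$, paired with uniqueness of Fourier coefficients, yields for each $n$ the ODE
\begin{equation*}
c_n''(\theta) + \Bigl( s_n^2 + \tfrac{\lambda}{\sin^2\theta} \Bigr) c_n(\theta) = 0.
\end{equation*}
Now I would make the Liouville-type substitution $c_n(\theta) = \sqrt{\sin\theta}\, g_n(\cos\theta)$. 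Setting $u = \cos\theta$, computing the derivatives, and using the identities $s_n^2 - \tfrac{1}{4} = \mu_n(\mu_n+1)$ and $\lambda = \tfrac{1}{4} - \nu^2$, the coefficient ODE transforms into the associated Legendre equation
\begin{equation*}
(1-u^2)g_n'' - 2u\,g_n' + \Bigl[\mu_n(\mu_n+1) - \tfrac{\nu^2}{1-u^2}\Bigr] g_n = 0,
\end{equation*}
whose two linearly independent solutions may be taken as $P_{\mu_n}^{-\nu}(u)$ and $Q_{\mu_n}^{-\nu}(u)$.

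Finally, I would invoke the $L^2$ hypothesis on $\phi$ to discard one of these two solutions. Near $\theta = 0$ the hyperbolic area element of the flare behaves like $d\theta/\sin^2\theta$, and the standard asymptotics of associated Legendre functions show that only $P_{\mu_n}^{-\nu}(\cos\theta)$ has the right decay to keep each Fourier mode square-integrable --- exactly analogous to the way the $I$-Bessel function was ruled out in Lemma \ref{cusp_lemma}. This leaves precisely the claimed expansion, with the surviving constants of integration absorbed into the coefficients $b_n$.

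The main obstacle is the Liouville substitution in the middle step: it is a routine but unenlightening calculation, and one must track parameters carefully to be sure the Legendre equation appears with order $-\nu$ (rather than $+\nu$), since the sign of $\nu$ is exactly what distinguishes the $L^2$-integrable solution from the singular one.
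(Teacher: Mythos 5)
Your proposal follows essentially the same route as the paper's proof: dilation invariance gives the logarithmic Fourier expansion, the polar-coordinate Laplacian yields the same ODE for the coefficients with fundamental solutions $\sqrt{\sin\theta}\,P^{-\nu}_{\mu_n}(\cos\theta)$ and $\sqrt{\sin\theta}\,Q^{-\nu}_{\mu_n}(\cos\theta)$, and the $L^2$ condition near $\theta=0$ eliminates the $Q$ solution. The only differences are cosmetic --- you spell out the Liouville substitution that the paper leaves implicit, while the paper makes the mode-by-mode square-integrability step explicit via Parseval over the flare, which is the justification your final step tacitly relies on.
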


\begin{proof}
	Since $\Gamma$ contains
	$\begin{psmallmatrix}
		\sqrt{\kappa} & 0 \\
		0 & \sqrt{\kappa}^{-1}
	\end{psmallmatrix}$
	and $\phi$ is invariant under the group action, we have that $\phi(\kappa z) = \phi(z)$ for all $z \in \mathbb{H}$.
	If we write $\phi = \phi(r, \theta)$, where $(r, \theta)$ are the usual polar coordinates, then $\phi$ is invariant under the map $r \mapsto \kappa r$.
	Thus, $\phi$ has a logarithmic Fourier expansion
	$$
	\phi(r, \theta) = \sum_{n\in\mathbb{Z}}g_n(\theta)e\left( n\frac{\log r}{\log \kappa} \right)
	$$
	Next, we would like to apply the equation $\Delta\phi = \lambda\phi$ to this expansion to find an explicit description of $g_n(\theta)$.
	By a change of variables, one finds $\Delta$ in our new $(r, \theta)$ coordinates.
	\begin{equation}\label{polarL-B}
		\Delta = -\sin^2\theta\left(r^2\frac{\partial^2}{\partial r^2} + r\frac{\partial}{\partial r} + \frac{\partial^2}{\partial\theta^2}\right)
	\end{equation}
	Thus, by uniqueness of Fourier coefficients, $g_n(\theta)$ satisfies the differential equation
	$$
	\sin^2\theta\left( g_n''(\theta) + \left( \frac{2\pi in}{\log k} \right)^2g_n(\theta) \right) = s(s-1)g_n(\theta)
	$$
	A fundamental set of solutions to this differential equation is given by
	$$
	g_n(\theta) = \sqrt{\sin\theta}\mathcal{L}^{-\nu}_{\mu_n}(\cos\theta)
	$$
	where
	$$
	\mu_n = -\frac{1}{2} + \frac{2\pi in}{\log \kappa},
	$$
	and $\mathcal{L} \in \{P, Q\}$.
	That is, the equation exhibits one solution involving the Legendre $P$ function and another linearly independent solution involving the Legendre $Q$ function.\footnote{a helpful reference for special functions is the Digital Library of Mathematical Functions \cite{NIST:DLMF}}
	This allows us to conclude that
	\begin{equation}\label{fourierPQ}
		\phi(r, \theta) = \sum_{n\in\mathbb{Z}}\left(a_n\sqrt{\sin\theta}Q^{-\nu}_{\mu_n}(\cos\theta) + b_n\sqrt{\sin\theta}P^{-\nu}_{\mu_n}(\cos\theta)\right)e\left( n\frac{\log r}{\log \kappa} \right)
	\end{equation}
	
	We are able to remove the $\mathcal{L} = Q$ solution using the fact that $\phi$ must be in $L^2(\Gamma\backslash\mathbb{H})$.
	Since $\mathcal{F}$ contains a flare, we have that
	\begin{equation}\label{inf_bound}
		\infty > \int_\mathcal{F}|\phi^2|\frac{drd\theta}{r\sin^2\theta} > \int_{0}^{\alpha}\int_{1}^{\kappa}|\phi^2|\frac{drd\theta}{r\sin^2\theta}
	\end{equation}
	for some $\alpha > 0$, where $\alpha$ is chosen small enough that the flare
	$$
	\{ z \in \mathbb{H} : 1 < |z| < \kappa, \arg z < \alpha \}
	$$
	is entirely contained in $\mathcal{F}$.
	Applying Parseval to the expansion \ref{fourierPQ}, one finds that
	$$
	\int_{0}^{\alpha}\int_{1}^{\kappa}|\phi^2|\frac{drd\theta}{r\sin^2\theta} =
	\log\kappa\sum_{n\in\mathbb{Z}}\int_{0}^{\alpha}\left| a_nQ^{-\nu}_{\mu_n}(\cos\theta) + b_nP^{-\nu}_{\mu_n}(\cos\theta) \right|^2\frac{d\theta}{\sin\theta}
	$$
	For our values of $\nu$ and $\mu_n$, $P_{\mu_n}^{-\nu}(x)$ is bounded as $x \rightarrow 1$, but $Q_{\mu_n}^{-\nu}(x)$ is unbounded.
	So the size of $\left| a_nQ^{-\nu}_{\mu_n}(\cos\theta) + b_nP^{-\nu}_{\mu_n}(\cos\theta) \right|$ is dominated by $Q_{\mu_n}^{-\nu}$.
	Indeed, for any fixed $n$, $Q_{\mu_n}^{-\nu}(\cos\theta)$ grows quickly enough as $\theta \rightarrow 0$ that the integral above diverges (see \cite{NIST:DLMF} 14.8).
	This would contradict the bound in \ref{inf_bound}, and so we have that $a_n = 0$ for all $n \in \mathbb{Z}$.
\end{proof}

\subsection{The New Algorithm}\label{sec:NewAlg}

To apply Hejhal's algorithm in a flare domain, we simply replace the cuspidal expansion in the original algorithm with the flare expansion.
In other words, the only function which needs to be modified is \texttt{approx\_coefficients}.
(We refer the reader to Section \ref{sec:HejhalOriginal} to review our version of Hejhal's algorithm).

Given a set of test points $\{z_i\}$ and their pullbacks $\{z^*_i\}$ to the fundamental domain, we first compute the corresponding points in polar coordinates $\{(r_i, \theta_i)\}$ and $\{(r_i^*, \theta_i^*)\}$ in the group containing a flare.
To clarify, suppose our original group $\Gamma$ contains a hyperbolic matrix $\gamma_\kappa$ such that
$$
A\gamma_\kappa A^{-1} =
\begin{pmatrix}
	\sqrt{\kappa} & 0 \\
	0 & \frac{1}{\sqrt{\kappa}}
\end{pmatrix}
$$
for some $\kappa > 1$ and $A \in \text{SL}(2, \mathbb{R})$.
Then of course the group $A\Gamma A^{-1}$ contains a diagonal matrix and so exhibits a flare domain.
Noticing that $A\Gamma A^{-1}$ acts on $A\mathbb{H}$ in the same manner as $\Gamma$ acts on $\mathbb{H}$, we choose to take $(r_i, \theta_i)$ and $(r_i^*, \theta_i^*)$ to be the polar coordinates of the point $A(z_i)$ and $A(z_i^*)$, resp.

Now that we have the test points and their pullbacks in polar coordinates, we approximate the Fourier coefficients of the Maass form as before - by truncating the Fourier expansion to a finite sum and setting up a linear system.
Given a guess for the eigenvalue $\lambda = 1/4 - \nu^2$, we get one equation from each test point and its pullback:
\begin{equation}
	\sum_{|n| \leq M}b_n\sqrt{\sin\theta_i}P^{-\nu}_{\mu_n}(\cos\theta_i)e\left( n\frac{\log r_i}{\log \kappa} \right) \approx
	\sum_{|n| \leq M}b_n\sqrt{\sin\theta_i^*}P^{-\nu}_{\mu_n}(\cos\theta_i^*)e\left( n\frac{\log r_i^*}{\log \kappa} \right)
\end{equation}
So long as our number of test points is greater than $2M + 1$, we will have an overdetermined system from which we may approximate the coefficients $b_n$ via least squares.
Since we have a method for approximating the coefficients given a guess for $\nu$, we may proceed with the algorithm as before to update our guess and converge on a true eigenvalue.

\subsection{Choosing Test Points for a Flare Domain}\label{sec:TestPointsFlare}

As in the case discussed in Section \ref{sec:TestPointsCusp}, one finds that certain choices of test points lead to faster convergence of Hejhal's algorithm in a flare.
In this section, we argue that taking points along a ray at a fixed angle constitutes one such choice.

In Figure \ref{ed_schottky}, we display a flare domain for the symmetric Schottky group with parameter $\theta = \pi/2$ (see Section \ref{ch:Schottky} for details on this group).
\begin{figure}
	\centering
	\includegraphics[width=\linewidth]{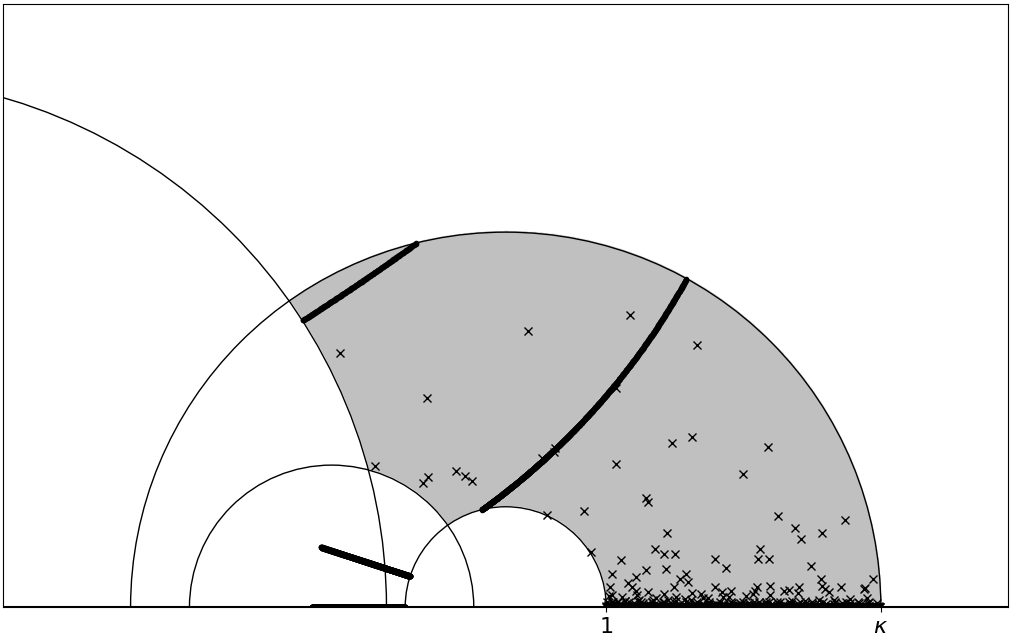}
	\caption{Test points at rays of angles $9999\pi/10000$ and $9\pi/10$ and their pullbacks given by $\times$'s and dots, resp. Note that the pullbacks coming from the larger angle distribute more sparsely about the fundamental domain.}
	\label{ed_schottky}
\end{figure}
The test points are chosen along a ray at a fixed angle $\varphi$, spaced evenly on a logarithmic scale from $1$ to $\sqrt{\kappa}$.
$$
\left\{ \kappa^{j/2000}e^{i\varphi} \right\}_{j = 1}^{1000}
$$
We only choose test points with norm between $1$ and $\kappa$ since the flare expansion is invariant under scaling by $\kappa$ (indeed, the fact that the group contains multiplication by $\kappa$ is what gives us a flare domain in the first place!).
In this particular example, we further restrict the test points to have norm between $1$ and $\sqrt{\kappa}$ because the group has a double cover which identifies points reflected across the geodesic $\{z \in \mathbb{H} : |z| = \sqrt{\kappa}\}$.
This cover is discussed in detail in Section \ref{ch:Schottky}.

As is seen in Figure \ref{ed_schottky}, our example exhibits equidistribution of pullback points as the angle of the ray approaches $\pi$.
This is a general result; see \cite{kelmer2018effective}.
In this way, test points chosen along rays in the flare expansion are similar to the low-lying horocycles in a cuspidal expansion.

As before, we must also consider the convergence of Fourier series.
Recall from Lemma \ref{flare_lemma} that Maass forms in a flare domain have the expansion
\begin{equation}\label{just_flare_series}
	\sum_{n\in\mathbb{Z}}b_n\sqrt{\sin\theta}P^{-\nu}_{\mu_n}(\cos\theta)e\left( n\frac{\log r}{\log \kappa} \right)
\end{equation}
In Section \ref{sec:Asymptotics}, we will see that the entire Fourier coefficient $b_n\sqrt{\sin\theta}P^{-\nu}_{\mu_n}(\cos\theta)$ decays exponentially with $n$.
Moreover, the proof of Lemma \ref{asymptotic_flare} shows that the decay is faster for smaller $\theta$.
Just as in the cuspidal case, this leads us to introduce the notion of \textit{admissibility} with respect to a flare expansion.

Consider the same set up as in Lemma \ref{flare_lemma}, and fix a bound $\alpha_0$ with $0 < \alpha_0 < \pi$.
We say that a point $z \in \mathbb{H}$ is \textit{admissible with respect to the expansion} \ref{flare_fourier} if $\arg(z) \leq \alpha_0$.
Unlike the cuspidal case, we are not forced to choose between fast convergence of the Fourier series and efficient computation of the Legendre-$P$ function.
Indeed, numerical evaluation of $P^{-\nu}_{\mu_n}(\cos\theta)$ is more efficient for smaller $\theta$, and it becomes more time-consuming as $\theta$ increases to $\pi$ (see \cite{NIST:DLMF} Section 14).

\subsection{Multiple Cusps and Flares}\label{sec:MultiExpansion}

In the presence of multiple cusps, it has been found to be necessary to simultaneously solve for the Fourier coefficients at every cusp.
See \cite{selander2002sextic} and \cite{stromberg2012vi} for some examples of this.
In this section, we explain how this can be extended to cases involving multiple cusps and/or multiple flares.

To that note, suppose $\Gamma$ is a Fuchsian group involving $c \geq 0$ cusps and $\ell \geq 0$ flares, with $c + \ell \geq 1$.
We will call the Maass form we are searching for $f$ and its eigenvalue $\lambda = 1/4 - \nu^2$.
Let $A_i \in \text{SL}(2, \mathbb{R})$ reposition the $i^{th}$ cusp to infinity with width $1$; i.e. $A_i\Gamma A_i^{-1}$ has a fundamental domain with a width 1 cusp at infinity, and $A_i$ maps the $i^{th}$ cusp to infinity.
Then by Lemma \ref{cusp_lemma}, $f$ has the following cuspidal expansion for each $i$.
\begin{equation}\label{cuspAi}
	f(z) = a^{(i)}_0\text{Im}(A_iz)^{\frac{1}{2} - \nu} + \sum_{n\neq0}a^{(i)}_n\sqrt{\text{Im}(A_iz)}K_\nu(2\pi|n|\text{Im}(A_iz))e(n\text{Re}(A_iz))
\end{equation}
We follow a similar pattern for the flares.
Let $B_j \in \text{SL}(2, \mathbb{R})$ reposition the $j^{th}$ flare to a flare domain of size $\kappa_j > 1$ (see Section \ref{sec:FlareDef}).
Then by Lemma \ref{flare_lemma}, $f$ has the following flare expansion for each $j$.
\begin{equation}\label{flareBj}
	f(z) = \sum_{n\in\mathbb{Z}}b^{(j)}_n\sqrt{\sin\theta(B_jz)}P^{-\nu}_{\mu_n}(\cos\theta(B_jz))e\left( n\frac{\log r(B_jz)}{\log \kappa} \right)
\end{equation}
where $\mu_n$ is defined in Lemma \ref{flare_lemma} and $(r(z), \theta(z))$ denotes the usual polar coordinates of a point $z \in \mathbb{H}$.

The algorithm then proceeds with just two major changes.
First, we introduce two parameters $M_C, M_F > 0$.
All cuspidal expansions are cut off at $|n| \leq M_C$, and all flare expansions are cut off at $|n| \leq M_F$.
Thus, we have a finite number of coefficients to solve for; namely, $c(2M_C + 1)$ cuspidal coefficients and $\ell(2M_F + 1)$ flare coefficients.

Second, we must choose which expansion to use for any given test point and its pullback.
We make use of the notion of \textit{admissibility}, defined for cusps in Section \ref{sec:TestPointsCusp} and for flares in Section \ref{sec:TestPointsFlare}.
Given $z \in \mathbb{H}$, we apply the following rules to choose an expansion.\footnote{one may suggest using all admissible expansions for a given test point, thus giving multiple equations per point; it was found experimentally that this slows the convergence of the algorithm}
\begin{enumerate}
	\item if $z$ is admissible for exactly one of the $c + \ell$ expansions, we use that expansion
	\item if $z$ is admissible for more than one cuspidal expansion but none of the flare expansions, we use the cuspidal expansion for which $\text{Im}(A_iz)$ is largest
	\item if $z$ is admissible for one or more cuspidal expansions and one or more flare expansions, we always use one of the flare expansions
	\item if $z$ is admissible for more than one flare expansion, we use the one for which $\theta(B_iz)$ is minimal
	\item if $z$ is not admissible for any expansion, we do not use it as a test point
\end{enumerate}
To justify rule 2, think of this as $A_iz$ being closest to infinity, and therefore $z$ being closer to the $i^{th}$ cusp than to any other cusp.
Rule 3 is arbitrary; the author suspects that replacing this rule with its opposite would neither aid nor impair convergence.
Finally, rule 4 is justified in the same manner as rule 2.

As a quick example, suppose our rules lead us to choose cuspidal expansion $i$ for a test point $z$ and flare expansion $j$ for its pullback $z^*$.
Then this point-pullback pair contributes the following equation to the linear system
\begin{multline}
	a^{(i)}_0\text{Im}(A_iz)^{\frac{1}{2} - \nu} + \sum_{\substack{n\neq0 \\ |n| \leq M_C}}a^{(i)}_n\sqrt{\text{Im}(A_iz)}K_\nu(2\pi|n|\text{Im}(A_iz))e(n\text{Re}(A_iz)) \\
	\approx
	\sum_{|n| \leq M_F}b^{(j)}_n\sqrt{\sin\theta(B_jz^*)}P^{-\nu}_{\mu_n}(\cos\theta(B_jz^*))e\left( n\frac{\log r(B_jz^*)}{\log \kappa} \right)
\end{multline}
Since each test point contributes one equation, we need more than $c(2M_C + 1) + \ell(2M_F + 1)$ test points to achieve an overdetermined system.
Note also that each expansion must appear at least once in the system; otherwise, we have no information about those Fourier coefficients.

Before moving on, we note that in the presence of multiple expansions, one may also take test points \textit{within our chosen fundamental domain}.
To get an equation in this scenario, we use the rules above to choose two expansions at which the point is admissible.
Then, we set the expansions at that point equal to each other.
Note that if a test point is taken inside our fundamental domain, it must be admissible with respect to at least two expansions.

\subsection{Bounds on Fourier Coefficients}\label{sec:Asymptotics}

For the cuspidal and flare expansions to converge, the Fourier coefficients must of course decay as $|n| \rightarrow \infty$.
In fact, it is possible to get upper bounds on these rates of decay, as we show in this section.
Having these rates is useful for Hejhal's algorithm, since we can rescale the coefficients in our finite linear system to increase the stability of the least squares solution.

Before stating a bound for the cuspidal case, we state a result which will be needed in the proof.
We consider a finitely generated, discrete subgroup $\Gamma < \text{SL}(2, \mathbb{Z})$.
By Patterson \cite{patterson1975laplacian} and Lax-Philips \cite{lax1982asymptotic}, the Laplacian acting on $L^2(\Gamma\backslash\mathbb{H})$ admits finitely many eigenvalues in $[0, 1/4)$.
We denote these eigenvalues
$$
0 \leq \lambda_0 < \lambda_1 < \cdots < \lambda_k < \frac{1}{4}
$$
and we write $\phi_j$ for the eigenfunction corresponding to $\lambda_j$.
Let us assume the eigenfunctions are scaled so that $||\phi_j||_2 = 1$.

Let $s_j > 1/2$ satisfy $\lambda_j = s_j(1 - s_j)$ for $1 \leq j \leq k$.
Then we have the following result on equidistribution of low-lying horocycles.
\begin{thm}\label{equidistribution_them}
	Fix notation as above, and let $\psi \in C^\infty(\Gamma\backslash\mathbb{H})$. Then
	$$
	\int_{0}^{1}\psi(x + iy)dx =
	C\langle \psi, \phi_0 \rangle y^{1-\delta} + O\left( y^{1 - s_1} \right)
	$$
	as $y \rightarrow 0$, where $\delta$ is the Hausdorff dimension of the limit set of $\Gamma$, $C > 0$ is independent of $\psi$, and the implied constant depends on a Sobolev norm for $\psi$.
\end{thm}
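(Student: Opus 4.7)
The plan is to spectrally decompose $\psi$ and integrate each piece across the horocycle separately. By Patterson \cite{patterson1975laplacian} and Lax--Phillips \cite{lax1982asymptotic}, since $\Gamma$ is finitely generated and geometrically finite, $L^2(\Gamma\backslash\mathbb{H})$ admits the decomposition
$$
\psi = \sum_{j=0}^{k}\langle\psi,\phi_j\rangle\phi_j + \psi_{\mathrm{temp}},
$$
where $\psi_{\mathrm{temp}}$ lies in the tempered spectrum $[1/4,\infty)$ and is expressible as a continuous superposition of Eisenstein series on the critical line $\mathrm{Re}(s) = 1/2$. I would handle the discrete and tempered parts separately.

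For the discrete part, each $\phi_j$ has the Fourier expansion at the cusp at infinity guaranteed by Lemma \ref{cusp_lemma}, with parameter $\nu_j = s_j - 1/2$. Integrating over $x \in [0,1]$ kills every $n \neq 0$ term and leaves
$$
\int_0^1 \phi_j(x+iy)\,dx = a_0^{(j)}\,y^{1-s_j}.
$$
Since $\lambda_0 < \lambda_1 < \cdots < \lambda_k < 1/4$, we have $\delta = s_0 > s_1 > \cdots > s_k > 1/2$. The $j=0$ term yields the main term: by Patterson--Sullivan, $\phi_0$ is (up to scaling) the base eigenfunction and $a_0^{(0)}$ is a positive constant intrinsic to $\Gamma$, so we set $C = a_0^{(0)}$. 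Each $j \geq 1$ contributes $\langle\psi,\phi_j\rangle\,a_0^{(j)}\,y^{1-s_j} = O(y^{1-s_1})$ with an implied constant controlled by $|\langle\psi,\phi_j\rangle| \leq \|\psi\|_2$.

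For the tempered contribution, I would invoke the meromorphic continuation of the Eisenstein series $E_s(z)$ attached to the cusp at infinity and write $\psi_{\mathrm{temp}}$ as an integral of its spectral transform against $E_{1/2+it}$ over $t \in \mathbb{R}$. The constant-in-$x$ term of $E_{1/2+it}(x+iy)$ is $y^{1/2+it} + \varphi(1/2+it)\,y^{1/2-it}$, so integrating $\psi_{\mathrm{temp}}$ along the closed horocycle produces $y^{1/2}$ times a bounded integral in $t$. Bounding this integral via standard spectral-multiplier estimates by a Sobolev norm of $\psi$ yields $\int_0^1 \psi_{\mathrm{temp}}(x+iy)\,dx = O(y^{1/2})$, which, since $s_1 > 1/2$, is absorbed into the error term $O(y^{1-s_1})$.

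The main obstacle is the rigorous treatment of the continuous spectrum in the infinite-volume setting: constructing and meromorphically continuing the Eisenstein series for geometrically finite $\Gamma$, controlling the scattering matrix on the critical line, and producing Sobolev-type bounds on the spectral coefficients $\langle\psi, E_{1/2+it}\rangle$ with explicit dependence on $t$. Once these ingredients are in place, summing the main term $C\langle\psi,\phi_0\rangle y^{1-\delta}$ and the bounded contributions from $j \geq 1$ and from $\psi_{\mathrm{temp}}$ produces the claimed asymptotic.
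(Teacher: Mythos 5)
The paper does not actually prove this statement; the ``proof'' is a citation to \cite{kontorovich2012almost}, Theorem 1.11. Your outline is the natural spectral-decomposition strategy, and your treatment of the discrete spectrum is essentially right: integrating each $\phi_j$ over the closed horocycle isolates its zeroth Fourier mode $a_0^{(j)}y^{1-s_j}$ (consistent with Lemma \ref{cusp_lemma}), the $j=0$ term gives the main term with $C=a_0^{(0)}>0$ because the base eigenfunction is positive, and the $j\geq 1$ terms are $O(y^{1-s_1})$ with constants controlled by $\|\psi\|_2$. This matches the skeleton of the argument in the cited reference.

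The genuine gap is in your tempered contribution, and it is not merely a technicality you can defer. The picture you invoke --- a cuspidal Eisenstein series $E_{1/2+it}$ with constant term $y^{1/2+it}+\varphi(1/2+it)y^{1/2-it}$ and a scattering matrix on the critical line --- is the \emph{finite-volume} spectral decomposition. For a geometrically finite group of infinite covolume, the absolutely continuous spectrum $[1/4,\infty)$ has infinite multiplicity (Lax--Phillips), the generalized eigenfunctions are parametrized by the whole boundary of the funnel ends rather than by cusps (indeed the group may have no cusp at all), and there is no single scattering function $\varphi(s)$ to control; one needs either the full resolvent/scattering theory on surfaces with funnels or, as in \cite{kontorovich2012almost}, a bound on the tempered part via effective decay of matrix coefficients for $L^2(\Gamma\backslash G)$ coming from the spectral gap, with the Sobolev norm entering through a standard $K$-finite approximation argument. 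Your plan as written would not produce the $O(y^{1/2})$ bound in this setting; you would need to replace the Eisenstein-series step with a mixing/matrix-coefficient estimate (or quote the Lax--Phillips/Patterson spectral theory in the form actually used in the reference). There is also a small unaddressed point: for $\psi$ merely smooth on an infinite-volume quotient one must assume finiteness of the relevant Sobolev norms for the pairing $\langle\psi,\phi_j\rangle$ and the spectral integral to make sense, which is implicit in the statement but should be said.
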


\begin{proof}
	See \cite{kontorovich2012almost} Theorem 1.11.
\end{proof}

Next, we provide a bound for the Fourier coefficients in a cuspidal expansion.

\begin{lem}
	Let $f$ be a Maass form with Laplacian eigenvalue $\lambda = 1/4 - \nu^2$ for a Fuchsian group $\Gamma$.
	Suppose $\Gamma$ has a width 1 cusp at infinity for which $f$ has the cuspidal expansion
	$$
	f(x + iy) = a_0y^{\frac{1}{2} - \nu} + \sum_{n\neq0}a_n\sqrt{y}K_\nu(2\pi|n|y)e(nx)
	$$
	Then the Fourier coefficients satisfy
	\begin{equation}\label{cusp_bound}
		|a_n| < C|n|^{\delta - 1/2}
	\end{equation}
	where $\delta$ is the Hausdorff dimension of the limit set of $\Gamma$ and $C > 0$ is a constant independent of $n$.
\end{lem}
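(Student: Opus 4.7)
The plan is to reduce the coefficient bound to the equidistribution statement in Theorem~\ref{equidistribution_them} applied to a suitable horocycle integral. First I would invert the Fourier expansion: multiplying both sides of the expansion of $f$ by $e(-nx)$ and integrating over $x \in [0, 1]$ isolates the $n$-th coefficient,
$$
a_n \sqrt{y}\, K_\nu(2\pi|n|y) \;=\; \int_0^1 f(x+iy)\, e(-nx)\, dx
$$
for $n \neq 0$. Bounding the right-hand side by the integral of $|f|$ (using $|e(-nx)| = 1$) then yields the inequality $|a_n|\sqrt{y}\, K_\nu(2\pi|n|y) \leq \int_0^1 |f(x+iy)|\, dx$, which converts the problem into a bound on a horocycle integral of $|f|$.

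The next step is to feed this horocycle integral into Theorem~\ref{equidistribution_them}. The function $|f|$ is $\Gamma$-invariant and lies in $L^2(\Gamma\backslash\mathbb{H})$, so $\langle |f|, \phi_0\rangle$ is finite by Cauchy--Schwarz on $\Gamma\backslash\mathbb{H}$. Applying the equidistribution theorem (or a suitable extension of it) to $\psi = |f|$ would give
$$
\int_0^1 |f(x+iy)|\, dx \;\ll\; y^{1-\delta}
$$
as $y \to 0$. Substituting into the previous inequality gives $|a_n| \ll y^{1/2 - \delta}/K_\nu(2\pi|n|y)$. The final move is to optimize in $y$: setting $y = 1/|n|$ makes $K_\nu(2\pi|n|y) = K_\nu(2\pi)$ a fixed positive constant, and $y^{1/2 - \delta} = |n|^{\delta - 1/2}$, producing the desired bound $|a_n| \ll |n|^{\delta - 1/2}$.

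The hard part will be justifying the application of Theorem~\ref{equidistribution_them} to the test function $|f|$, which is merely continuous (real-analytic away from the zero set of $f$) rather than $C^\infty$ as the theorem statement demands. A natural remedy is a mollification argument: convolve $|f|$ with a bump function at scale $\epsilon$, apply the theorem to the smoothed function, and choose $\epsilon$ as a small power of $y$ so that the Sobolev-norm blowup in the error term remains dominated by the gap between $y^{1-\delta}$ and $y^{1-s_1}$. A safer but weaker fallback is to apply the theorem to the manifestly smooth $\psi = |f|^2$ and then invoke Cauchy--Schwarz on the horocycle, giving $\int_0^1 |f|\, dx \ll y^{(1-\delta)/2}$ and hence only $|a_n| \ll |n|^{\delta/2}$; this loses a factor over the stated bound but is still sufficient for the numerical-stability application that motivates the lemma in the algorithm.
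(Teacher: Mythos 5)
Your proposal follows essentially the same route as the paper: isolate $a_n\sqrt{y}\,K_\nu(2\pi|n|y)$ by integrating against $e(-nx)$ along a horocycle, bound the resulting integral of $|f|$ via Theorem~\ref{equidistribution_them} with $\psi = |f|$, and take $y \asymp 1/|n|$ so that the Bessel factor is bounded between fixed constants (the paper averages over $y \in [1/n, 2/n]$ against the hyperbolic measure rather than evaluating at $y = 1/n$, which is immaterial). Your concern that $|f|$ is merely continuous rather than $C^\infty$ is a fair one, but it applies equally to the paper's own argument, which invokes the theorem for $\psi = |f|$ without comment.
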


\begin{proof}
	We will use the fact that $f \in L^2(\Gamma\backslash\mathbb{H})$ to get a bound on the $a_n$'s.
	For simplicity, let us assume for the remainder of the argument that $n > 0$ (the $n < 0$ case follows similarly).
	First, we multiply both sides of the cuspidal expansion by $1/y^2$ and integrate over $y$ in $1/n$ to $2/n$ (assuming of course that $n \neq 0$).
	This bounds $K_{\nu}(2\pi ny)$ between two constants, and hence we conclude that
	$$
	\left|a_n\int_{1/n}^{2/n}\sqrt{y}\frac{dy}{y^2}\right| \leq C_0\left|\int_{1/n}^{2/n}\int_{0}^{1}f(x+iy)e(-nx)\frac{dxdy}{y^2}\right|
	$$
	where $C_0 > 0$ is a constant independent of $n$.
	The integral on the left hand side is a positive constant times $\sqrt{n}$, and so we have
	\begin{equation}\label{firstCuspBound}
		|a_n| \leq
		C_1n^{-1/2}\int_{1/n}^{2/n}\int_{0}^{1}|f(x+iy)|\frac{dxdy}{y^2}
	\end{equation}
	where $C_1 > 0$ is a constant independent of $n$.
	
	Next, we apply Theorem \ref{equidistribution_them} to the inner integral on the right hand side of \ref{firstCuspBound}.
	Taking $\psi = |f|$, this implies that
	$$
	\int_{0}^{1}|f(x + iy)|dx =
	C\langle |f|, \phi_0 \rangle y^{1-\delta} + O\left( y^{1 - s_1} \right)
	$$
	(Recall that $dxdy/y^2$ is the Haar measure on $\mathbb{H}$).
	By Cauchy-Schwartz, the inner product is bounded by the $L_2$ norm of $f$.
	Since $s_1 < \delta$, the previous equation implies that
	\begin{equation}\label{xIntBound}
		\int_{0}^{1}|f(x + iy)|dx \ll_f y^{1 - \delta}
	\end{equation}
	Putting together the bounds (\ref{firstCuspBound}) and (\ref{xIntBound}), we get
	$$
	|a_n| \ll_f n^{-1/2}\int_{1/n}^{2/n}y^{-1-\delta}dy
	$$
	from which immediately follows the desired result.
\end{proof}

Next, we provide a bound for the Fourier coefficients in a flare expansion.

\begin{lem}\label{asymptotic_flare}
	Let $f$ be a Maass form with exceptional eigenvalue $\lambda = 1/4 - \nu^2$ for a Fuchsian group $\Gamma$, and assume $f$ is scaled so that $||f|| = 1$.
	Suppose $\Gamma$ has a flare domain with a flare of size $\kappa > 1$ for which $f$ has the expansion
	$$
	f(z) = \sum_{n\in\mathbb{Z}}b_n\sqrt{\sin\theta}P^{-\nu}_{\mu_n}(\cos\theta)e\left( n\frac{\log r}{\log \kappa} \right)
	$$
	where $(r, \theta)$ denote the polar coordinates for $z$ and $\mu_n$ is taken as in Lemma \ref{flare_lemma}.
	Then the Fourier coefficients satisfy
	$$
	b_n \ll n^se^{-\pi n\alpha/\log\kappa}
	$$
	where $\lambda = s(1 - s), 1/2 \leq s \leq 1$ and $0 < \alpha < \pi$ is a constant independent of $n$.
\end{lem}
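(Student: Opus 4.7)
The plan has three ingredients: (i) Fourier inversion in the radial variable extracts $b_n$ up to a factor of $P^{-\nu}_{\mu_n}(\cos\theta)$; (ii) the $L^2$-normalization $\|f\|=1$ bounds this Fourier integral at a generic interior angle $\theta_0$ of the flare; and (iii) a uniform asymptotic for $P^{-\nu}_{\mu_n}(\cos\theta_0)$ as $n\to\infty$ shows that this function grows like $e^{\tau_n\theta_0}/\tau_n^{\nu+1/2}$, with $\tau_n=2\pi n/\log\kappa$, which supplies the exponential decay of $b_n$.

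For (i) and (ii), multiplying the flare expansion by $e(-n\log r/\log\kappa)$ and integrating $r\in[1,\kappa]$ against $dr/r$, orthogonality of the characters gives
$$b_n\sqrt{\sin\theta}\,P^{-\nu}_{\mu_n}(\cos\theta) \;=\; \frac{1}{\log\kappa}\int_1^{\kappa} f(re^{i\theta})\,e(-n\log r/\log\kappa)\,\frac{dr}{r},$$
valid for every $\theta$ in the flare's angular range $(0,\alpha_0)$. Since the flare embeds into the fundamental domain, $\int_0^{\alpha_0}\!\!\int_1^{\kappa}|f|^2\,dr\,d\theta/(r\sin^2\theta)\le\|f\|^2=1$; a pigeonhole/averaging argument then picks out $\theta_0\in(\alpha_0/4,\,3\alpha_0/4)$ with $\int_1^\kappa|f(re^{i\theta_0})|^2\,dr/r = O(1)$, and Cauchy-Schwarz bounds $\int_1^\kappa|f(re^{i\theta_0})|\,dr/r$ by a constant depending only on $\kappa$ and $\alpha_0$. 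Hence $|b_n|\ll 1/|P^{-\nu}_{\mu_n}(\cos\theta_0)|$.

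The heart of the argument is (iii). The Liouville-Green substitution $y=v/\sqrt{\sin\theta}$ removes the first-derivative term in the associated Legendre equation and produces
$$v'' \;=\; \Big(\tau_n^2 + \tfrac{\nu^2-1/4}{\sin^2\theta}\Big)v, \qquad \tau_n := \tfrac{2\pi n}{\log\kappa},$$
since $\mu_n(\mu_n+1)+1/4 = -\tau_n^2$. The coefficient on the right is positive and dominated by $\tau_n^2$, giving solutions of exponential (not oscillatory) type $e^{\pm\tau_n\theta}$ away from $\theta=0$. Near $\theta=0$, the equation reduces (after rescaling $x=\tau_n\theta$) to the Liouville-Green form of the modified Bessel equation, so one obtains an Olver-type uniform asymptotic
$$P^{-\nu}_{\mu_n}(\cos\theta)\;\sim\;\tau_n^{-\nu}\sqrt{\theta/\sin\theta}\;I_\nu(\tau_n\theta),$$
with the matching constant fixed by the known boundary behavior $P^{-\nu}_{\mu_n}(\cos\theta)\sim(\theta/2)^\nu/\Gamma(1+\nu)$ at $\theta=0$. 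At $\theta_0$, the large-argument asymptotic $I_\nu(x)\sim e^x/\sqrt{2\pi x}$ then yields $|P^{-\nu}_{\mu_n}(\cos\theta_0)| \gg e^{\tau_n\theta_0}/(\tau_n^{\nu+1/2}\sqrt{\sin\theta_0})$.

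Substituting this into the bound from step (ii), with $s=\nu+1/2$ and $\alpha:=2\theta_0\in(0,\pi)$, gives $|b_n|\ll n^s\,e^{-\pi n\alpha/\log\kappa}$, as claimed; the case $n<0$ follows from the symmetry $|P^{-\nu}_{\mu_{-n}}|=|P^{-\nu}_{\mu_n}|$, since $\mu_{-n}=\overline{\mu_n}$ and $\nu$ is real. The main obstacle is step (iii): making the uniform asymptotic quantitative enough to yield both the correct polynomial prefactor $\tau_n^{-\nu-1/2}$ and the sharp exponential rate $e^{\tau_n\theta_0}$, with remainder terms negligible compared to the leading exponential. This is within the scope of Olver-type uniform-asymptotic theory for ODEs with a large parameter, using $I_\nu$ as the comparison function at the regular singular point $\theta=0$ where $P^{-\nu}_{\mu_n}$ has known power-law behavior.
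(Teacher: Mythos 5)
Your proposal is correct in substance, but be aware that the paper does not actually prove this lemma: its proof is the single line ``This statement is proven in \cite{kontorovich2012almost} Proposition A.5.'' What you have written is essentially a reconstruction of that argument, and all three of your ingredients are the right ones. Step (i) is just orthogonality of the characters $e(n\log r/\log\kappa)$ on $[1,\kappa]$ against $dr/r$. Step (ii) correctly exploits that the flare sits inside the fundamental domain: the hyperbolic measure in polar coordinates is $dr\,d\theta/(r\sin^2\theta)$, so dropping $\sin^{-2}\theta\ge 1$ and averaging in $\theta$ produces a $\theta_0$, chosen once and independently of $n$, with $\int_1^\kappa|f(re^{i\theta_0})|^2\,dr/r=O(1)$. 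The only nontrivial input is step (iii), and there you do not need to redo the Liouville--Green/Olver analysis from scratch: the uniform asymptotic $P^{-\nu}_{-1/2+i\tau}(\cos\theta)\sim\tau^{-\nu}(\theta/\sin\theta)^{1/2}I_\nu(\tau\theta)$ for conical functions is exactly \cite{NIST:DLMF} \S 14.20(vii), and your normalization check against $P^{-\nu}_{\mu_n}(\cos\theta)\sim(\theta/2)^\nu/\Gamma(1+\nu)$ as $\theta\to0$ confirms the matching constant. Since you only need a lower bound at the single fixed angle $\theta_0$, the fixed-$\theta$ version suffices and gives $|P^{-\nu}_{\mu_n}(\cos\theta_0)|\gg\tau_n^{-s}e^{\tau_n\theta_0}$ for all sufficiently large $n$, the finitely many remaining $n$ being absorbed into the implied constant; the case $n<0$ is handled by the paper's own Lemma \ref{P_technical}. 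Two small repairs: take $\theta_0<\pi/2$ so that $\alpha=2\theta_0$ lands in $(0,\pi)$ as the statement requires, and note that since $\nu^2<1/4$ the coefficient $\tau_n^2+(\nu^2-\tfrac14)/\sin^2\theta$ is actually negative for $\theta\lesssim 1/\tau_n$ --- which is precisely why the Bessel comparison at the singular endpoint $\theta=0$ is needed rather than a bare exponential-type bound, and why the asymptotic is only asserted for $\theta$ bounded away from $\pi$. Neither point affects your conclusion.
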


\begin{proof}
	This statement is proven in \cite{kontorovich2012almost} Proposition A.5.
\end{proof}

Let us make a few quick remarks on Lemma \ref{asymptotic_flare}.
First, recall that the assumption that $\lambda$ is \textit{exceptional} is equivalent to the statement $\lambda < 1/4$, i.e. that $0 \leq \nu \leq 1/2$.
In this case, we prefer the parameterization $\lambda = s(1 - s)$ where $s = \nu + 1/2$.
This makes the statement of the lemma cleaner and, in the case that $\lambda = \lambda_0$ is the bottom eigenvalue, $s$ is equal to the Hausdorff dimension of the limit set.

Second, note that the quadratic $s(1 - s)$ is decreasing on $1/2 \leq s \leq 1$.
This means that the largest value of $s$ occurs at the smallest eigenvalue.
As we noted in the introduction, this value of $s$ is exactly equal to $\delta$, the Hausdorff dimension of the limit set of $\Gamma$.
So we may replace the bound in Lemma \ref{asymptotic_flare} with the weaker bound
\begin{equation}\label{delta_bound}
	b_n \ll n^\delta e^{-\pi n\alpha/\log\kappa}
\end{equation}
if so desired.
For example, if we know the Hausdorff dimension $\delta$ but none of the other eigenvalues, this bound may be the best we can find.

Finally, we claim that this result extends to non-exceptional eigenvalues using the Hausdorff dimension in the bound as in (\ref{delta_bound}).
The proof of this follows in the same lines as the one given in \cite{kontorovich2012almost} Proposition A.5.
Instead of the parameterization $\lambda = s(1 - s)$, one uses $\lambda = (1/2 - ir)(1/2 + ir)$ for some $r \geq 0$.

\subsection{Notes on Base Eigenfunctions}\label{sec:BaseEfuncs}

Before moving on to our examples in Sections \ref{ch:Schottky} and \ref{ch:Hecke}, we discuss the special properties of eigenfunctions associated with the smallest eigenvalue.
As discussed in Section \ref{sec:FuchsianGroups}, all eigenvalues of the Laplacian are nonnegative.
We call the smallest eigenvalue the \textit{base eigenvalue} and the eigenfunctions associated with it \textit{base eigenfunctions}.

In the introduction, we already saw one property of the base eigenvalue, namely its quadratic relationship with the Hausdorff dimension of the limit set of a Fuchsian group.
There are other interesting results on base eigenfunctions that can be utilized to speed up convergence of Hejhal's algorithm in this special case.
We collect such results in the present section, beginning with a basic fact from Patterson-Sullivan theory.

\begin{lem}\label{mult1}
	Let $\Gamma$ be a Fuchsian group with base eigenvalue $\lambda_0$.
	Then $\lambda_0$ has multiplicity 1.
\end{lem}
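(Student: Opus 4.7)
The plan is to prove Lemma \ref{mult1} via the standard Rayleigh-quotient / maximum-principle argument, adapted to the infinite-volume setting in which the base eigenvalue $\lambda_0$ is the bottom of the $L^2$-spectrum and is known to be a genuine $L^2$ eigenvalue whenever $\delta > 1/2$ (by Patterson-Sullivan and Lax-Phillips). The strategy is to show that any base eigenfunction is nowhere vanishing, and then to note that two linearly independent nowhere-vanishing functions in the same eigenspace force a contradiction.

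First I would reduce to real-valued eigenfunctions: since $\Delta$ is a real operator, the real and imaginary parts of any complex $\lambda_0$-eigenfunction in $L^2(\Gamma\backslash\mathbb{H})$ are themselves $\lambda_0$-eigenfunctions, so it suffices to bound the dimension of the real eigenspace. Next, I would recall that $\lambda_0$ is characterized variationally as the infimum of the Rayleigh quotient
\[
R(u) = \frac{\int_{\Gamma\backslash\mathbb{H}} |\nabla u|^2\, d\mu}{\int_{\Gamma\backslash\mathbb{H}} |u|^2\, d\mu}
\]
over $u \in H^1(\Gamma\backslash\mathbb{H}) \setminus \{0\}$, with $\mu$ the hyperbolic measure. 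By Kato's inequality (or the elementary fact that $|\nabla |u|| = |\nabla u|$ a.e.\ for real $u \in H^1$), if $\phi$ is a real base eigenfunction then $|\phi|$ is also in $H^1$ and achieves the same Rayleigh quotient. Hence $|\phi|$ is also a minimizer, and by the Euler-Lagrange equation $|\phi|$ is a weak, hence by elliptic regularity a smooth, nonnegative solution of $\Delta u = \lambda_0 u$.

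Then I would invoke the strong maximum principle (equivalently, Harnack's inequality) for the uniformly elliptic operator $\Delta - \lambda_0$ on the (locally Euclidean-like) quotient $\Gamma\backslash\mathbb{H}$: a smooth nonnegative solution either vanishes identically or is strictly positive everywhere. Since $\phi \not\equiv 0$, the function $|\phi|$ is strictly positive, so $\phi$ itself is either everywhere positive or everywhere negative; in particular $\phi$ never vanishes. Finally, if the $\lambda_0$-eigenspace had dimension at least two, I could pick two linearly independent real base eigenfunctions $\phi_1,\phi_2$, fix a point $z_0 \in \mathbb{H}$, and form $\psi := \phi_1 - (\phi_1(z_0)/\phi_2(z_0))\phi_2$, which is a real $\lambda_0$-eigenfunction vanishing at $z_0$ but not identically zero; by the preceding paragraph this is impossible.

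The main obstacle I anticipate is the application of the strong maximum principle on the quotient $\Gamma\backslash\mathbb{H}$: because $\Gamma$ has infinite covolume the quotient is noncompact and has a flare end (and possibly a cusp), so one must be careful that the argument is genuinely local — which it is, since Harnack's inequality is purely local and $\Gamma\backslash\mathbb{H}$ is a smooth orbifold away from a discrete set of fixed points (which can be handled by lifting to $\mathbb{H}$ where $\phi$ is $\Gamma$-invariant). A secondary subtlety is the Kato-inequality step for $H^1$ functions; I would simply cite it rather than re-prove it.
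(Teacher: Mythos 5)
Your argument is correct, but it is a genuinely different route from the paper's: the paper does not prove Lemma \ref{mult1} at all, it simply cites \cite{gamburd2002spectral}, whereas you supply the standard self-contained ground-state-simplicity argument (variational characterization of $\lambda_0$, Kato's inequality to pass to $|\phi|$, strong maximum principle/Harnack to get strict positivity, and then the two-eigenfunction linear-combination trick). The caveats you flag are exactly the right ones: the variational step needs $\lambda_0$ to lie strictly below the bottom $1/4$ of the essential spectrum so that minimizers of the Rayleigh quotient are honest $L^2$ eigenfunctions, which is guaranteed by the standing hypothesis $\delta > 1/2$ (and is implicit in the lemma's assumption that a base eigenvalue exists); and the maximum principle is local, so noncompactness of the flare end and the orbifold points are harmless. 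It is worth noting how your proof interacts with the surrounding material: the paper later proves nonnegativity of the base eigenfunction (Lemma \ref{nonneg}) by a nodal-domain argument that \emph{uses} Lemma \ref{mult1}, whereas you prove the stronger statement (strict positivity) first and deduce multiplicity one from it, reversing the logical order. Your route buys a self-contained proof and, as a byproduct, strengthens Lemma \ref{nonneg} to everywhere-positivity (which the paper asserts in the introduction but only proves in the weaker nonnegative form); the cost is the analytic overhead of Kato's inequality and elliptic regularity, which the paper avoids by outsourcing to the literature. One small presentational point: with the paper's sign convention $\Delta = -y^2(\partial_x^2+\partial_y^2)$, the cleanest way to run the last step is to observe that a nonnegative eigenfunction satisfies $y^2(\partial_x^2+\partial_y^2)|\phi| = -\lambda_0|\phi| \le 0$, so $|\phi|$ is a supersolution of the flat Laplacian and the classical strong minimum principle applies directly, sidestepping any worry about the sign of the zeroth-order term.
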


\begin{proof}
	This is shown in \cite{gamburd2002spectral}.
\end{proof}

For the reader's convenience, we give the standard arguments to prove the rest of the results in this section.

\begin{lem}\label{real-valued}
	Let $\Gamma$ be a Fuchsian group with base eigenvalue $\lambda_0$.
	Then we may take a base eigenfunction to be real-valued.
\end{lem}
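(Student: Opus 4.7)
The plan is to exploit the fact that the hyperbolic Laplacian has real coefficients and that the action of $\Gamma \subset \text{PSL}(2,\mathbb{R})$ is by real M\"obius transformations, so complex conjugation preserves all of the Maass form conditions. Combined with Lemma \ref{mult1}, this will force any base eigenfunction to be a unit-modulus scalar multiple of a real-valued one.

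First I would observe that if $\phi$ is a base eigenfunction, then $\overline{\phi}$ is also a Maass form with eigenvalue $\lambda_0$. Indeed, $\lambda_0$ is real and the Laplacian in \eqref{deltaUHP} has real coefficients, so $\Delta\overline{\phi} = \overline{\Delta \phi} = \overline{\lambda_0 \phi} = \lambda_0 \overline{\phi}$. Square-integrability is preserved by conjugation, and $\Gamma$-invariance is preserved because every $\gamma \in \Gamma$ has real entries, so $\overline{\phi}(\gamma z) = \overline{\phi(\gamma z)} = \overline{\phi(z)}$.

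Next I would invoke Lemma \ref{mult1}: since the $\lambda_0$-eigenspace is one-dimensional, there must exist a constant $c \in \mathbb{C}$ with $\overline{\phi} = c\phi$. Conjugating this equation once more gives $\phi = \overline{c}\,\overline{\phi} = |c|^2 \phi$, so $|c| = 1$ and we may write $c = e^{i\theta}$. Then a short calculation shows that $\psi := e^{i\theta/2}\phi$ satisfies $\overline{\psi} = e^{-i\theta/2}\overline{\phi} = e^{-i\theta/2} e^{i\theta}\phi = e^{i\theta/2}\phi = \psi$, so $\psi$ is real-valued. Since $\psi$ is a nonzero scalar multiple of $\phi$, it is itself a base eigenfunction, completing the proof.

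There is no real obstacle here; the only subtlety is recognizing that one cannot simply take $\phi + \overline{\phi}$, because although that is a real-valued eigenfunction with eigenvalue $\lambda_0$, one must still argue that it is nonzero (which again follows from Lemma \ref{mult1}, since otherwise $\phi$ would be purely imaginary and $i\phi$ would be a nonzero real-valued base eigenfunction). Either route works, but threading through the multiplicity-one statement via the phase rotation $e^{i\theta/2}$ feels cleanest and directly uses the preceding lemma.
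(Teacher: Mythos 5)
Your proof is correct and follows essentially the same route as the paper: conjugation preserves all the Maass form conditions, multiplicity one (Lemma \ref{mult1}) forces $\overline{\phi} = c\phi$, and a phase rotation makes the function real-valued. Your version is in fact slightly more explicit than the paper's, which simply notes that $\arg(\varphi(z))$ is constant and invokes a rotation, whereas you verify $|c|=1$ and exhibit the rotation $e^{i\theta/2}$ directly.
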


\begin{proof}
	If $f$ is an eigenfunction of the Laplace-Beltrami operator, then $\bar{f}$ is also an eigenfunction with the same eigenvalue (see Equation \ref{deltaUHP}).
	By Lemma \ref{mult1}, we also know that the base eigenvalue has multiplicity $1$.
	Together, these facts imply that the base eigenfunction $\varphi$ satisfies
	$$
	\varphi(z) = \omega\bar{\varphi}(z)
	$$
	for some $\omega \in \mathbb{C}$.
	In other words, $\arg(\varphi(z))$ is constant.
	Since an eigenfunction is only defined up to multiplication by a nonzero constant, we can multiply by a rotation to assume without loss of generality that $\varphi$ is real-valued.
\end{proof}

\begin{lem}\label{nonneg}
	Let $\Gamma$ be a Fuchsian group with base eigenvalue $\lambda_0$.
	Then we may take a base eigenfunction to be nonnegative.
\end{lem}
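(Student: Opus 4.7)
The plan is to use the variational (Rayleigh quotient) characterization of the base eigenvalue together with the multiplicity-one fact already established in Lemma \ref{mult1}. The classical statement is that the infimum
\[
\lambda_0 = \inf_{f \in H^1(\Gamma\backslash\mathbb{H}) \setminus \{0\}} \frac{\langle \Delta f, f\rangle}{\langle f, f\rangle}
\]
is attained precisely by (scalar multiples of) the base eigenfunction. By Lemma \ref{real-valued} we may take a base eigenfunction $\varphi$ to be real-valued, so it splits as $\varphi = \varphi_+ - \varphi_-$, where $\varphi_\pm := \max(\pm\varphi, 0) \geq 0$ have essentially disjoint supports. The goal is to show one of these summands vanishes identically.

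First I would record the standard chain-rule identities for $H^1$ functions: $\varphi_\pm \in H^1$, and $\nabla \varphi_\pm$ is supported on $\{\pm\varphi > 0\}$ and agrees there with $\pm\nabla\varphi$ (while $\nabla\varphi = 0$ a.e.\ on $\{\varphi = 0\}$ by Stampacchia). Consequently
\[
\|\nabla\varphi\|_2^2 = \|\nabla\varphi_+\|_2^2 + \|\nabla\varphi_-\|_2^2, \qquad \|\varphi\|_2^2 = \|\varphi_+\|_2^2 + \|\varphi_-\|_2^2.
\]
Dividing and using that $\varphi$ attains the infimum $\lambda_0$, together with the trivial bound that the Rayleigh quotient of each nonzero $\varphi_\pm$ is at least $\lambda_0$, a two-term weighted-mean argument forces each nonzero $\varphi_\pm$ to also achieve $\lambda_0$. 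Thus, if both $\varphi_+$ and $\varphi_-$ were nontrivial, each would be a minimizer of the Rayleigh quotient, hence a weak eigenfunction of $\Delta$ with eigenvalue $\lambda_0$; by elliptic regularity they would be smooth eigenfunctions. Since their supports are essentially disjoint, they would be linearly independent, contradicting Lemma \ref{mult1}.

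Therefore one of $\varphi_+, \varphi_-$ is identically zero, so $\varphi$ has a definite sign. Replacing $\varphi$ by $-\varphi$ if necessary, we may take $\varphi \geq 0$, proving the lemma.

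The main obstacle is a careful handling of the regularity of $\varphi_\pm$: these are only a priori $H^1$ functions, so one must invoke the Stampacchia chain rule to split the Dirichlet integral, and then elliptic regularity to upgrade a nonzero minimizer to a genuine smooth eigenfunction before the multiplicity-one argument can be applied. Both are standard but are the only non-formal steps in the argument.
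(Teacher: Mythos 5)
Your proof is correct and is essentially the paper's argument: both rest on the variational (Rayleigh quotient) characterization of $\lambda_0$ combined with the multiplicity-one statement of Lemma \ref{mult1}, showing that a sign change would produce a second independent minimizer. The only difference is cosmetic — the paper phrases the decomposition in terms of nodal domains and replacing negative parts by their absolute values, while you work with $\varphi_\pm$ and justify the splitting via Stampacchia's chain rule, which is if anything the more careful rendering of the same idea.
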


\begin{proof}
	Call the base eigenfunction $\varphi$, and assume by Lemma \ref{real-valued} that it is real-valued.
	Recall that the zero set of $\varphi$ consists of a finite union of real analytic curves (for a discussion on these \textit{nodal lines}, see e.g. \cite{donnelly1987nodal}).
	In particular, the nodal lines separate a fundamental domain for $\Gamma\backslash\mathbb{H}$ into regions on which $\varphi$ takes only positive or only negative values.
	Next, note that
	\begin{equation}\label{inf0}
		\lambda_0 = \inf_{\phi \in C_0^\infty(\Gamma\backslash\mathbb{H})}\frac{||\Delta\phi||}{||\phi||}
	\end{equation}
	where $C_0^\infty(\Gamma\backslash\mathbb{H})$ denotes the dense subspace of smooth functions with compact support in $L^2(\Gamma\backslash\mathbb{H})$.
	This is true for all positive definite operators.
	
	To finish the argument, we consider the negative-valued sections of $\varphi$ separately from the positive-valued sections.
	If we replace any negative sections with their absolute value, we still have an $L^2$ function.
	By Equation \ref{inf0}, this leaves the eigenvalue unchanged, contradicting Lemma \ref{mult1}.
	The contradiction is avoided of course if there were no negative sections to begin with, and thus we see that $\varphi$ is nonnegative.
	(It also could have been the case that $\varphi$ had no positive-valued sections, but then replacing $\varphi$ with $-\varphi$ gives us the same result).
\end{proof}

\begin{lem}\label{efunc_even}
	Let $\Gamma$ be a Fuchsian group with base eigenvalue $\lambda_0$, and suppose that $\Gamma$ is invariant under conjugation by the map $z \mapsto -\bar{z}$.
	Then the base eigenfunction is even.
\end{lem}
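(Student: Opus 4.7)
The plan is to use the same multiplicity-one principle that drove Lemmas \ref{real-valued} and \ref{nonneg}. Let $\sigma$ denote the reflection $\sigma(z) = -\bar z$, and let $\varphi$ be a base eigenfunction, normalized (by Lemmas \ref{real-valued} and \ref{nonneg}) to be real-valued and nonnegative. Define $\tilde\varphi(z) = \varphi(\sigma z) = \varphi(-\bar z)$. The claim to prove is that $\tilde\varphi = \varphi$, which is exactly the statement that $\varphi$ is ``even'' in the sense of invariance under $\sigma$.

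First I would verify that $\tilde\varphi$ is again a base eigenfunction. The map $\sigma$ is an (orientation-reversing) isometry of $\mathbb{H}$, so it commutes with the Laplacian; thus $\Delta\tilde\varphi = \lambda_0\tilde\varphi$. For $\Gamma$-invariance, use the hypothesis $\sigma\Gamma\sigma^{-1}=\Gamma$: for any $\gamma\in\Gamma$ there is $\gamma'\in\Gamma$ with $\sigma\gamma = \gamma'\sigma$, and hence
\begin{equation*}
\tilde\varphi(\gamma z) = \varphi(\sigma\gamma z) = \varphi(\gamma'\sigma z) = \varphi(\sigma z) = \tilde\varphi(z).
\end{equation*}
Also $\tilde\varphi \in L^2(\Gamma\backslash\mathbb{H})$ since $\sigma$ preserves the hyperbolic measure $dx\,dy/y^2$.

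Next I would invoke Lemma \ref{mult1}: the base eigenspace is one-dimensional, so $\tilde\varphi = c\varphi$ for some $c\in\mathbb{C}$. Applying $\sigma$ a second time gives $\varphi = \sigma^2\varphi = c^2\varphi$, so $c^2 = 1$ and $c\in\{\pm 1\}$. To rule out $c=-1$, use Lemma \ref{nonneg}: since $\varphi \geq 0$ everywhere and $\tilde\varphi(z) = \varphi(-\bar z) \geq 0$ as well, the equation $\tilde\varphi = -\varphi$ would force $\varphi \equiv 0$, which is impossible for an eigenfunction. Hence $c = 1$ and $\varphi(-\bar z) = \varphi(z)$.

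The only mildly subtle step is the $\Gamma$-invariance of $\tilde\varphi$, which needs the conjugation hypothesis rather than just $\sigma\in\Gamma$; once that is in hand the argument is entirely driven by the simplicity of the base eigenvalue and the nonnegativity from Lemma \ref{nonneg}, so no extra analytic work is required.
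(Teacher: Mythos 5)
Your proof is correct and follows essentially the same route as the paper: show $\varphi(-\bar z)$ is again a $\Gamma$-invariant $L^2$ eigenfunction, invoke the multiplicity-one statement (Lemma \ref{mult1}) to get $\varphi(-\bar z)=c\varphi(z)$, and pin down $c=1$. The only cosmetic difference is in the last step, where the paper deduces $c=1$ from agreement of the two functions on the fixed line $x=0$, while you use $c^2=1$ together with the nonnegativity from Lemma \ref{nonneg}; both work.
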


\begin{proof}
	Let $\varphi$ denote the base eigenfunction, and write $c$ for the map $c(z) = -\bar{z}$.
	Since $\Gamma$ is invariant under conjugation by $c$, we have that $\varphi\circ c$ is $\Gamma$-invariant.
	Moreover, two derivatives are taken in $x$ in the hyperbolic Laplacian (see Equation \ref{deltaUHP}).
	Therefore, we see that $\varphi\circ c(x + iy) = \varphi(-x + iy)$ is also an eigenfunction with the same eigenvalue.
	By Lemma \ref{mult1}, the base eigenvalue has multiplicity 1, implying that
	$$
	\varphi(x + iy) = \omega\varphi(-x + iy)
	$$
	for some $\omega \in \mathbb{C}$.
	On the other hand, the functions $\varphi(x + iy)$ and $\varphi(-x + iy)$ match on the line $x = 0$; so we must have that $C = 1$.
\end{proof}

\begin{lem}\label{cusp_real}
	Let $\Gamma$ be a Fuchsian group with a width 1 cusp at infinity, and let $\varphi$ denote the base eigenfunction.
	Suppose further that $\Gamma$ is invariant under conjugation by the map $z \mapsto -\bar{z}$.
	Then the Fourier coefficients in the cuspidal expansion for $\varphi$ may be taken to be real-valued.
\end{lem}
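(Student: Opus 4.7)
The plan is to combine the three preceding lemmas (real-valuedness, evenness, and multiplicity one) with the uniqueness of Fourier coefficients. By Lemma \ref{real-valued}, we may normalize $\varphi$ so that $\varphi(z) \in \mathbb{R}$ for all $z \in \mathbb{H}$, and by Lemma \ref{efunc_even} this same $\varphi$ satisfies $\varphi(x+iy) = \varphi(-x+iy)$. Write out the cuspidal expansion (\ref{cusp_fourier}) guaranteed by Lemma \ref{cusp_lemma}.

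First I would observe that the special functions appearing in the expansion are real-valued. Since $\varphi$ is the base eigenfunction and the paper's standing assumption is $\delta > 1/2$, the parameter $\nu = \delta - 1/2$ is real and positive, so $y^{1/2-\nu}$ is real for $y > 0$, and $K_\nu(2\pi|n|y)$ is real for real order and positive argument. Now take the complex conjugate of the expansion. Since $\varphi = \overline{\varphi}$, uniqueness of Fourier coefficients with respect to the basis $\{e(nx)\}_{n \in \mathbb{Z}}$ yields $a_0 = \overline{a_0}$ and, after reindexing $n \mapsto -n$, the relation
\[
a_{-n} = \overline{a_n} \qquad (n \in \mathbb{Z}).
\]

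Next I would exploit evenness. Substituting $x \mapsto -x$ in the expansion and using $\varphi(x+iy) = \varphi(-x+iy)$ together with $e(-nx) = e((-n)x)$, uniqueness again gives
\[
a_n = a_{-n} \qquad (n \in \mathbb{Z}).
\]
Combining the two relations produces $a_n = \overline{a_n}$ for every $n$, so each Fourier coefficient is real.

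There is no serious obstacle here; the only mildly delicate point is confirming that the basis functions $y^{1/2-\nu}$ and $\sqrt{y}K_\nu(2\pi|n|y)$ are themselves real, which requires knowing that $\nu$ is real for the base eigenfunction. This is automatic in the setting of the paper ($\delta > 1/2$, so $\lambda_0 < 1/4$ is exceptional with real $\nu$); even in the non-exceptional case $\nu = it$ one has $K_{it}(x) = K_{-it}(x) \in \mathbb{R}$ for $x > 0$, so the same argument applies provided $a_0 y^{1/2-\nu}$ is handled by observing that the zeroth coefficient is isolated from the $n \neq 0$ terms and must itself be real by matching real and imaginary parts.
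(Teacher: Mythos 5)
Your proposal is correct and follows essentially the same route as the paper: both use Lemma \ref{efunc_even} to deduce $a_n = a_{-n}$ and Lemma \ref{real-valued} to force the coefficients to be real, the only cosmetic difference being that the paper first folds the expansion into cosines before invoking real-valuedness, while you conjugate the expansion directly and combine $a_{-n} = \overline{a_n}$ with $a_n = a_{-n}$. Your explicit remark that the basis functions $y^{1/2-\nu}$ and $\sqrt{y}K_\nu(2\pi|n|y)$ are real is a point the paper leaves implicit, and it is handled correctly.
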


\begin{proof}
	By Lemma \ref{cusp_lemma}, we may write the Fourier expansion at the cusp as
	$$
	\varphi(x + iy) = \sum_{n \in \mathbb{Z}}a_nW_n(y)e(nx)
	$$
	where
	$$
	W_n(y) =
	\begin{cases}
		y^{\frac{1}{2} - \nu} & n = 0 \\
		\sqrt{y}K_\nu(2\pi|n|y) & n \neq 0
	\end{cases}
	$$
	and the base eigenvalue is $\lambda_0 = \frac{1}{4} - \nu^2$.
	From Lemma \ref{efunc_even}, we know that $\varphi$ is even.
	Thus,
	$$
	\sum_{n \in \mathbb{Z}}a_nW_n(y)e(nx) = \sum_{n \in \mathbb{Z}}a_nW_n(y)e(-nx)
	$$
	Putting this together with the definition of $W_n(y)$, this immediately implies that $a_n = a_{-n}$.
	This allows us to ``fold'' the Fourier expansion to only allow nonnegative coefficients.
	This proceeds by adding the $n$ and $-n$ terms together and using the fact that
	$$
	e(nx) + e(-nx) = 2\cos(2\pi nx)
	$$
	We can absorb the $2$ into the coefficient $a_n$ and get
	\begin{equation}\label{cusp_folded}
		\varphi(x + iy) = a_0W_0(y) + \sum_{n = 1}^{\infty}a_nW_n(y)\cos(2\pi nx)
	\end{equation}
	Finally, we use the fact that $\varphi$ is real-valued (Lemma \ref{real-valued}) to see that the $a_n$'s must be real as well.
\end{proof}

In Lemma \ref{cusp_real}, we see how we may make Hejhal's algorithm more efficient when searching for the base eigenvalue.
In particular, the cuspidal expansion exhibits the extra structure shown in Equation \ref{cusp_folded}.
We use this folded expansion in the linear system whenever we are approximating the Fourier coefficients of the base eigenfunction.

In Lemma \ref{flare_real} below, we will see that a similar result holds in the flare expansion.
But first, we require a technical lemma on the Legendre $P$-function.

\begin{lem}\label{P_technical}
	Let $\nu, \theta, \kappa \in \mathbb{R}$, $n \in \mathbb{Z}$, and $\mu_n = -1/2 + 2\pi in/\log\kappa$.
	Then
	$$
	P_{\mu_{n}}^{-\nu}(\cos\theta) = P_{\mu_{-n}}^{-\nu}(\cos\theta)
	$$
\end{lem}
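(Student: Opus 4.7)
The plan is to exploit a standard symmetry of the associated Legendre function of the first kind under the involution $\mu \mapsto -\mu-1$, combined with an arithmetic observation about the two parameters $\mu_n$ and $\mu_{-n}$.

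First, I would note that $\mu_n$ and $\mu_{-n}$ enjoy the algebraic relation
\[
\mu_{-n} = -\tfrac{1}{2} - \tfrac{2\pi i n}{\log\kappa} = -1 - \bigl(-\tfrac{1}{2} + \tfrac{2\pi i n}{\log\kappa}\bigr) = -\mu_n - 1.
\]
Equivalently, $\mu_n$ and $\mu_{-n}$ are the two roots of the quadratic $\mu^2 + \mu + \bigl(\tfrac14 + (2\pi n/\log\kappa)^2\bigr) = 0$, so in particular $\mu_n(\mu_n+1) = \mu_{-n}(\mu_{-n}+1)$.

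The second ingredient is the reflection identity
\[
P_{\mu}^{-\nu}(x) \;=\; P_{-\mu-1}^{-\nu}(x),
\]
valid for $\mu \in \mathbb{C}$ and $x \in (-1,1)$; see, e.g.\ \cite{NIST:DLMF} 14.9. The reason this holds is that the associated Legendre equation
\[
(1-x^2)y'' - 2xy' + \Bigl(\mu(\mu+1) - \tfrac{\nu^2}{1-x^2}\Bigr) y \;=\; 0
\]
depends on $\mu$ only through the symmetric combination $\mu(\mu+1)$, which is invariant under $\mu \mapsto -\mu-1$. Thus $P_\mu^{-\nu}$ and $P_{-\mu-1}^{-\nu}$ are two solutions of the same ODE that share the normalization distinguishing the Legendre $P$-function (namely, regularity and the prescribed behavior as $x \to 1$), so they must be identical.

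Combining the two observations gives the result immediately: since $\mu_{-n} = -\mu_n - 1$, the reflection identity yields $P^{-\nu}_{\mu_n}(\cos\theta) = P^{-\nu}_{-\mu_n-1}(\cos\theta) = P^{-\nu}_{\mu_{-n}}(\cos\theta)$. The main (very modest) obstacle is simply citing the correct reflection identity from the special functions literature; everything else is a one-line algebraic check that $\mu_n + \mu_{-n} = -1$.
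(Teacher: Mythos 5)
Your proof is correct, but it takes a genuinely different route from the paper's. You use the degree-reflection symmetry $P^{-\nu}_{-\mu-1}(x) = P^{-\nu}_{\mu}(x)$ together with the algebraic identity $\mu_{-n} = -\mu_n - 1$; the reflection identity is indeed standard (it follows at once from the hypergeometric representation $P^{-\nu}_{\mu}(x) = \tfrac{1}{\Gamma(1+\nu)}\bigl(\tfrac{1+x}{1-x}\bigr)^{-\nu/2}{}_2F_1(\mu+1,-\mu;1+\nu;(1-x)/2)$ and the symmetry of ${}_2F_1$ in its first two arguments, so you could cite that directly rather than the slightly hand-wavy ODE-plus-normalization argument). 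The paper instead observes that $\mu_{-n} = \bar{\mu}_n$, so that $P^{-\nu}_{\mu_{-n}}(\cos\theta) = \overline{P^{-\nu}_{\mu_{n}}(\cos\theta)}$, and then proves the stronger statement that $P^{-\nu}_{\mu_{n}}(\cos\theta)$ is \emph{real}, by checking that the Pochhammer products $(\mu_n+1)_m(-\mu_n)_m = \prod_{k=0}^{m-1}\bigl((\tfrac12+k)^2 + (2\pi n/\log\kappa)^2\bigr)$ are real. Your argument is shorter and more conceptual (the Legendre equation sees the degree only through $\mu(\mu+1)$, which is symmetric under $\mu\mapsto-\mu-1$), but note that the realness of $P^{-\nu}_{\mu_n}(\cos\theta)$ established as a byproduct of the paper's proof is quietly used again in Lemma \ref{flare_real} (``every other term in the expansion is real''); with your approach you would recover it by combining your equality with the conjugation relation $P^{-\nu}_{\mu_{-n}} = \overline{P^{-\nu}_{\mu_{n}}}$, which you state but do not exploit.
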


\begin{proof}
	Recall that for $-1 < x < 1$, the Legendre $P$ function is defined by
	$$
	P_\mu^\nu(x) = 
	\frac{1}{\Gamma(1 - \nu)}\left( \frac{1 + x}{1 - x} \right)^{\nu/2}\prescript{}{2}{F_1}(\mu + 1, -\mu, 1 - \nu, (1 - x)/2)
	$$
	(see \cite{NIST:DLMF} 14.3.1).
	The hypergeometric function is defined by the series
	$$
	\prescript{}{2}{F_1}(a, b, c, z) = \sum_{n=0}^{\infty}\frac{(a)_n(b)_n}{(c)_n}\frac{z^n}{n!}
	$$
	where $(a)_n$ denotes the Pochammer symbol.
	Now since $\mu_n = -1/2 + 2\pi in/\log\kappa$, we have $\mu_{-n} = \bar{\mu}_n$.
	Tracking this bar through the definitions of the special functions above, we see that
	$$
	P_{\mu_{-n}}^{-\nu}(\cos\theta) = \bar{P}_{\mu_{n}}^{-\nu}(\cos\theta)
	$$
	On the other hand, note that the only piece of $P_{\mu_{n}}^{-\nu}(\cos\theta)$ which is not clearly real is the hypergeometric function
	$$
	\prescript{}{2}{F_1}(\mu_n + 1, -\mu_n, 1 + \nu, (1 - \cos\theta)/2)
	$$
	But in fact this is real, since $\mu_n + 1 = \frac{1}{2} + \frac{2\pi in}{\log\kappa}$, $-\mu_n = \frac{1}{2} - \frac{2\pi in}{\log\kappa}$, and
	$$
	(a + x)_m(a - x)_m = \prod_{k=0}^{m-1}((a + k)^2 - x^2)
	$$
	So since $P_{\mu_{n}}^{-\nu}(\cos\theta)$ is real and $P_{\mu_{-n}}^{-\nu}(\cos\theta) = \bar{P}_{\mu_{n}}^{-\nu}(\cos\theta)$, we have concluded the proof that $P_{\mu_{-n}}^{-\nu}(\cos\theta) = P_{\mu_{n}}^{-\nu}(\cos\theta)$.
\end{proof}

\begin{lem}\label{flare_real}
	Let $\Gamma$ be a Fuchsian group which admits a flare domain of width $\kappa$, and let $\phi(r, \theta)$ denote the base eigenfunction in polar coordinates.
	Suppose further that $\Gamma$ is invariant under conjugation by the map $r \mapsto \kappa/r$.
	Then the Fourier coefficients in the flare expansion for $\phi$ may be taken to be real-valued.
\end{lem}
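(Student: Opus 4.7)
The plan is to mimic the structure of the proof of Lemma \ref{cusp_real}, substituting the role of the even symmetry $x \mapsto -x$ of the cusp with the symmetry $r \mapsto \kappa/r$ at the flare, and using Lemma \ref{P_technical} in place of the parity of the exponential.

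First I would translate the hypothesis into a statement about $\phi$. The map $\iota : z \mapsto \kappa/\bar z$ is an orientation-reversing isometry of $\mathbb{H}$ (inversion in the geodesic $|z| = \sqrt{\kappa}$), and in polar coordinates it sends $(r,\theta)$ to $(\kappa/r,\theta)$, so it realizes the claimed conjugation. Because $\iota\Gamma\iota^{-1} = \Gamma$ and $\iota$ is an isometry (hence commutes with $\Delta$), the function $\phi\circ\iota$ is again a $\Gamma$-invariant Laplace eigenfunction with eigenvalue $\lambda_0$. By Lemma \ref{mult1} the base eigenspace is one-dimensional, so $\phi\circ\iota = \omega\,\phi$ for some constant $\omega$. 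Evaluating on the fixed geodesic $|z|=\sqrt{\kappa}$ gives $\omega = 1$ (using Lemma \ref{nonneg} to ensure $\phi$ does not vanish identically along this geodesic). Therefore $\phi(r,\theta) = \phi(\kappa/r,\theta)$ identically.

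Next I would plug this symmetry into the flare expansion from Lemma \ref{flare_lemma}. Under $r \mapsto \kappa/r$, the exponential $e(n\log r/\log\kappa)$ becomes $e(n)\,e(-n\log r/\log\kappa) = e(-n\log r/\log\kappa)$, while the angular factor $\sqrt{\sin\theta}\,P_{\mu_n}^{-\nu}(\cos\theta)$ is unchanged. Equating the expansion of $\phi(r,\theta)$ with that of $\phi(\kappa/r,\theta)$, relabeling $n \mapsto -n$ on one side, and invoking Lemma \ref{P_technical} to identify $P_{\mu_n}^{-\nu}(\cos\theta)$ with $P_{\mu_{-n}}^{-\nu}(\cos\theta)$, uniqueness of Fourier coefficients forces $b_n = b_{-n}$ for every $n \in \mathbb{Z}$.

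Finally I would fold the expansion by pairing the $n$ and $-n$ terms, using $e(n\log r/\log\kappa) + e(-n\log r/\log\kappa) = 2\cos(2\pi n\log r/\log\kappa)$ and absorbing the factor of $2$ into the coefficient, to obtain a cosine series
\[
\phi(r,\theta) = b_0\sqrt{\sin\theta}\,P_{\mu_0}^{-\nu}(\cos\theta) + \sum_{n=1}^{\infty} b_n\sqrt{\sin\theta}\,P_{\mu_n}^{-\nu}(\cos\theta)\cos\!\left(2\pi n\tfrac{\log r}{\log\kappa}\right).
\]
By Lemma \ref{real-valued} we may take $\phi$ real-valued, and by Lemma \ref{P_technical} each $P_{\mu_n}^{-\nu}(\cos\theta)$ is real; since the cosines and $\sqrt{\sin\theta}$ are manifestly real, comparing real parts forces each $b_n$ to be real. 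The main subtle step is arguing that the constant $\omega$ is equal to $1$ rather than a general unimodular scalar; this is where the positivity of the base eigenfunction (Lemma \ref{nonneg}) is essential, since it rules out the alternative $\omega = -1$ by preventing $\phi$ from vanishing along the whole fixed geodesic of $\iota$.
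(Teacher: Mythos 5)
Your proof is correct and follows essentially the same route as the paper: use multiplicity one of the base eigenvalue to get $\phi(r,\theta) = \omega\,\phi(\kappa/r,\theta)$, deduce $\omega = 1$ from agreement on $|z| = \sqrt{\kappa}$, equate Fourier expansions and invoke Lemma \ref{P_technical} to get $b_n = b_{-n}$, then fold into a cosine series and use real-valuedness of $\phi$. Your extra care in justifying $\omega = 1$ via non-vanishing of $\phi$ on the fixed geodesic is a reasonable refinement of a point the paper passes over quickly, but it does not change the argument.
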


\begin{proof}
	Write $s$ for the map $s(r, \theta) = (\kappa/r, \theta)$, and suppose the function $f(r, \theta)$ is $\Gamma$-invariant.
	Then since $\Gamma$ is invariant under conjugation by $s$, $f\circ s(r, \theta)$ is $\Gamma$-invariant as well.
	Moreover, it is a simple exercise to check that if $f(r, \theta)$ is an eigenfunction of the hyperbolic Laplacian, then $f\circ s(r, \theta) = f(\kappa/r, \theta)$ is also an eigenfunction with the same eigenvalue (see Equation \ref{polarL-B} for the Laplace-Beltrami operator in polar coordinates).
	Next, by Lemma \ref{flare_lemma}, we may write
	$$
	\phi(r, \theta) = \sum_{n \in \mathbb{Z}}b_n\sqrt{\sin\theta}P^{-\nu}_{\mu_n}(\cos\theta)e\left( n\frac{\log r}{\log\kappa} \right) ~~~\text{where}~~~ \mu_n = -\frac{1}{2} + \frac{2\pi in}{\log \kappa}
	$$
	for the Fourier expansion at a flare.
	Since the base eigenfunction has multiplicity 1 (Lemma \ref{mult1}), and since $\phi(\kappa/r, \theta)$ is also an eigenfunction with the same eigenvalue, we have that
	$$
	\phi(r, \theta) = \omega\phi(\kappa/r, \theta)
	$$
	for some constant $\omega \in \mathbb{C}$.
	But these functions agree at $r = \sqrt{\kappa}$; thus, $\omega = 1$ and $\phi$ is invariant under $r \mapsto \kappa/r$.
	
	Setting the Fourier expansions of $\phi(r, \theta)$ and $\phi(\kappa/r, \theta)$ equal to each other, and using the fact that $e(n\log(\kappa/r)/\log\kappa) = e(-n\log r/\log\kappa)$, we see that
	$$
	b_n\sqrt{\sin\theta}P_{\mu_n}^{-\nu}(\cos\theta) = b_{-n}\sqrt{\sin\theta}P_{\mu_{-n}}^{-\nu}(\cos\theta)
	$$
	By Lemma \ref{P_technical}, $P_{\mu_{n}}^{-\nu}(\cos\theta) = P_{\mu_{-n}}^{-\nu}(\cos\theta)$.
	So in fact, this shows that $b_n = b_{-n}$.
	
	We now wish to ``fold'' the Fourier expansion by adding the $\pm n$ terms together.
	This gives
	$$
	\phi(r, \theta) = b_0\sqrt{\sin\theta}P_{-1/2}^{-\nu}(\cos\theta) +
	\sum_{n=1}^{\infty}b_n\sqrt{\sin\theta}P_{\mu_{n}}^{-\nu}(\cos\theta)\cos\left( 2\pi n\frac{\log r}{\log\kappa} \right)
	$$
	Since $\phi$ is real-valued, and since every other term in the expansion is real, this shows that the $b_n$'s are real as well.
\end{proof}


\section{Hejhal's Algorithm applied to Schottky Groups}\label{ch:Schottky}

In this section, we work out an explicit example of applying our algorithm to a family of groups which exhibit \textit{no cusps}, namely, symmetric Schottky groups.
As the reader will no doubt observe, there are quite a few small details one must get right in order to achieve convergence of the algorithm.
It is the author's hope that this example - along with that given in Section \ref{ch:Hecke} - will aid those who wish to apply the extended algorithm to other Fuchsian groups.
We urge the reader to pay particular attention to use of group covers in this section; if a cover of a Fuchsian group exists, it is likely that the new algorithm will fail to converge unless one moves to the cover.

\subsection{Fundamental Domain for Symmetric Schottky Groups}

Recall from section \ref{sec:IntroSchottky} that we parameterize a symmetric Schottky group by the angle $\theta$ along the unit circle cut out by each of the three circles.
In Figure \ref{pi_over_2}(b), we shade in a fundamental domain for a symmetric Schottky group.
We verify that image in the present section.
Note that the proof below implies a pullback algorithm for finding points in the fundamental domain equivalent to any point in the disk, which is an essential piece of Hejhal's algorithm.

\begin{lem}
	Let $\mathcal{S}$ be a symmetric Schottky group with parameter $\theta < 2\pi/3$.
	Denote the 3 circles which generate the reflection group $R_1, R_2, R_3$.
	Then a fundamental domain for $\mathcal{S}$ is the set of points in the interior of the unit circle which fall in the exterior of all three circles.
\end{lem}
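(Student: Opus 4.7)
The plan is to prove both halves of the fundamental-domain condition by a ping-pong style argument, which is natural here because $\mathcal{S}$ is generated by three involutions and the candidate domain $\mathcal{F}$ is bounded by the mirror circles together with an arc of the unit circle. Concretely, let $D_i$ denote the interior of the disk bounded by $R_i$ intersected with the open unit disk $\mathbb{D}$, and let $r_i \in \mathcal{S}$ denote the reflection through $R_i$. The first preliminary step is to observe that the hypothesis $\theta < 2\pi/3$ implies that the three arcs cut out on the unit circle are pairwise disjoint (they would only become tangent at $\theta = 2\pi/3$), and hence the closed disks $\overline{D_1}, \overline{D_2}, \overline{D_3}$ are mutually disjoint. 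The candidate fundamental domain is then $\mathcal{F} = \mathbb{D} \setminus (\overline{D_1} \cup \overline{D_2} \cup \overline{D_3})$.

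For \emph{injectivity} (no two interior points of $\mathcal{F}$ are $\mathcal{S}$-equivalent), I would argue by induction on the word length of a reduced word $\gamma = r_{i_1} r_{i_2} \cdots r_{i_n}$ with $i_k \neq i_{k+1}$, showing that $\gamma(\mathcal{F}) \subset \overline{D_{i_1}}$. The base case $n=1$ is immediate: since $\mathcal{F}$ lies in the exterior of $R_{i_1}$ in $\mathbb{D}$, its reflection $r_{i_1}(\mathcal{F})$ lies in $D_{i_1}$. For the inductive step, the inductive hypothesis places $r_{i_2}\cdots r_{i_n}(\mathcal{F})$ inside $\overline{D_{i_2}}$, which is disjoint from $\overline{D_{i_1}}$ by the preliminary step; hence this set is contained in the exterior of $R_{i_1}$, and applying $r_{i_1}$ sends it into $\overline{D_{i_1}}$. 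Since $\overline{D_{i_1}} \cap \mathcal{F}^\circ = \emptyset$, no nontrivial group element can identify two points in the interior of $\mathcal{F}$.

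For \emph{surjectivity} (every point in $\mathbb{D}$ is $\mathcal{S}$-equivalent to a point in $\overline{\mathcal{F}}$), I would describe the explicit pullback procedure: given $z \in \mathbb{D}$, if $z \in \mathcal{F}$ we are done; otherwise $z$ lies in some $D_i$, and we replace $z$ by $r_i(z)$. Iterating produces a sequence in which the index $i$ of the disk changes at every step (the new point is in $\overline{D_i}$'s complement by the same reflection computation as above, and if it lies in some $D_j$ then necessarily $j \neq i$). To show this process terminates at a point of $\overline{\mathcal{F}}$, I would use that each $r_i$ is a hyperbolic isometry and the hyperbolic diameters of the iteratively nested reflected disks — which encode the possible locations of $z$ after $n$ pullbacks — shrink to zero, forcing the algorithm to exit into $\mathcal{F}$ in finitely many steps for any starting $z$ not on the limit set. (Alternatively, one can invoke the Poincaré polygon theorem directly, since $\mathcal{F}$ has finitely many sides paired by the involutions $r_i$ and no vertex cycles beyond those forced by $r_i^2 = 1$, whose angle sums trivially satisfy the Poincaré conditions.)

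The main obstacle is the termination of the pullback step: the injectivity portion is essentially formal ping-pong, but surjectivity requires a quantitative statement that successive reflections contract. The cleanest route is the disjointness estimate from $\theta < 2\pi/3$ — it guarantees a positive hyperbolic distance between the three mirror circles, and a standard Schottky contraction argument then shows that the image of any $\overline{D_j}$ under $r_i$ (for $i \neq j$) sits strictly inside $D_i$ with uniformly smaller hyperbolic diameter, yielding the needed geometric convergence.
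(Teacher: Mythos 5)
Your injectivity half is essentially the paper's own argument: the same induction on reduced words $R_{i_1}\cdots R_{i_k}$ showing the image of the domain lands in the first disk, so only the identity returns a point to the domain. Where you genuinely diverge is the surjectivity/termination step. The paper first establishes discreteness by applying Poincar\'e's theorem to the finite-index reflection cover $\Gamma$ of Section 3.3, and then uses the elementary observation that reflecting a point of $D_i$ through $R_i$ strictly \emph{decreases} its Euclidean distance to the origin; an infinite pullback sequence would then put infinitely many distinct orbit points in the compact set $\{|w|\le|z|\}$, contradicting proper discontinuity. Your Schottky contraction argument is self-contained and does not need discreteness as an input, which is a real advantage; the paper's route is shorter but leans on the cover construction. (Your parenthetical alternative --- invoking Poincar\'e's polygon theorem directly on $\mathcal{F}$, whose sides are self-paired by the $r_i$ and which has no interior vertices when $\theta<2\pi/3$ --- is also legitimate and is closest in spirit to how the paper gets discreteness.)

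One correction is needed in your termination step: the \emph{hyperbolic} diameters of the nested reflected disks do not tend to zero. Each $R_i$ is a geodesic, so $D_i$ meets $\partial\mathbb{D}$ and has infinite hyperbolic diameter, as does every isometric image $\gamma(D_i)$. The standard Schottky contraction is Euclidean: since the closed disks are pairwise disjoint, $|r_i'|\le\lambda<1$ on $\overline{D_j}$ for $j\ne i$, so the Euclidean diameters of the nested sets $r_{i_1}\cdots r_{i_{n-1}}(\overline{D_{i_n}})$ decay geometrically and their intersection is a single point. Moreover each of these nested sets meets $\partial\mathbb{D}$ in a nonempty arc (the reflections preserve the unit circle), so that single point lies on $\partial\mathbb{D}$ and cannot equal a starting point $z\in\mathbb{D}$; this also disposes of your caveat about the limit set, since the limit set sits on the boundary circle. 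With ``Euclidean'' substituted for ``hyperbolic,'' your argument closes.
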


\begin{proof}
	We will use $R_i$ to denote both the circle and the reflection through it; the meaning should be clear from context.
	Let
	$$
	\gamma = R_{i_1}\cdots R_{i_k}
	$$
	be a reduced word of length $k$ in the reflections (by \textit{reduced} we mean that $R_{i_j} \neq R_{i_{j+1}}$ for all $j$, since reflections are involutions).
	Note that every element of $\mathcal{S}$ can be written as such a reduced word.
	Now suppose $z$ is any element of the fundamental domain.
	If $k = 1$, then it is clear that $\gamma(z)$ is inside the circle $R_{i_1}$.
	By induction, it is then easy to see that for $k \geq 1$, $\gamma(z)$ is always in the circle $R_{i_1}$.
	Thus, the only way for $\gamma(z)$ to be in the fundamental domain is to have $k = 0$; i.e., $\gamma$ is the identity.
	
	Next, we note that $\mathcal{S}$ acts discretely on hyperbolic space.
	This follows from Poincar\'e's Theorem on fundamental polygons (see \cite{beardon2012geometry} section 9.8).
	In Section \ref{sec:6cover}, we show that our Schottky groups have a finite cover by a reflection group.
	Since the angles in a fundamental polygon for this group can all be written as $\pi/n$ for some integer $n$ (see figure \ref{fd_reflection}), Poincar\'e's Theorem applies.
	Thus, as a finite-index subgroup of a discrete group acting on hyperbolic space, $\mathcal{S}$ also acts discretely.
	
	Finally, to see that every orbit has a point in the fundamental domain, one first checks that given a point in one of the circles $R_i$, reflection through that circle \textit{decreases} the distance from the point to $0$.
	Then, since $\mathcal{S}$ acts discretely, we can eventually move any point to the fundamental domain by repeatedly reflecting outside of any circle it falls in.
\end{proof}

\subsection{Rotational Symmetry}

We now consider the spectrum of the Laplacian on $L^2(\mathcal{S}\backslash\mathbb{D}$).
For any eigenvalue, it is often the case that a corresponding Maass form exists which exhibits a rotational symmetry.

To be more specific, let $\mathcal{S}$ be a symmetric Schottky group with parameter $\theta < 2\pi/3$, and suppose $\lambda$ is an eigenvalue of the Laplacian on $L^2(\mathcal{S}\backslash\mathbb{D})$.
It has been found experimentally that there tends to exist a Maass form for $\mathcal{S}$ with eigenvalue $\lambda$ which is invariant under rotation by $2\pi/3$.
We now give a heuristic argument for this observation.

Recall that we may map from the upper half plane model to the disk model via the Cayley transform
$$
z \mapsto \frac{z - i}{z + i}
$$
By a straightforward change of variables, we may convert the hyperbolic Laplacian from rectangular coordinates in the upper half plane model - see Equation (\ref{deltaUHP}) - to polar coordinates in the disk model.
This gives
$$
\Delta = -\left(\frac{(1 - \rho^2)^2}{4}\frac{\partial^2}{\partial\rho^2} +
\frac{(1 - \rho^2)^2}{4\rho}\frac{\partial}{\partial\rho} +
\frac{(1 - \rho^2)^2}{4\rho^2}\frac{\partial^2}{\partial\theta^2}\right)
$$
From this formula, one easily sees that rotations are $\Delta$-invariant.
Thus, if we compose any eigenfunction of the Laplacian with a rotation, we still have an eigenfunction of the same eigenvalue.

Next, note that a rotation of $2\pi/3$ is smooth on the fundamental domain for the $\mathcal{S}$.
Thus, given a Maass form $f$ for the eigenvalue $\lambda$, we can construct a $2\pi/3$ rotation-invariant Maass form of the same eigenvalue by taking the linear combination
$$
\frac{1}{3}\sum_{i = 0}^{2}f\left(e^{2\pi i/3} z\right)
$$
The only issue with the argument is that this linear combination may identically vanish, even if $f$ itself is not uniformly zero.
However, this at least gives an intuitive reason why we may expect to find a $2\pi/3$ rotation-invariant Maass form.

In any case, we see that there often is an underlying structure to these Schottky groups which is not immediately present in any flare expansion one would attempt to utilize in Hejhal's algorithm.
Instead of expecting the algorithm to come up with this symmetry on its own, we can build it into the problem by moving to a cover of our Schottky group.

\subsection{A 6-Fold Cover}\label{sec:6cover}

We start by considering a reflection group which contains our symmetric Schottky group $\mathcal{S}$ as a proper subset.
We will describe this group in the disk model.
We define three geodesics by stating their endpoints:
\begin{itemize}
	\item $D_1$ connects $-e^{i\pi/3}$ and $e^{i\pi/3}$
	\item $D_2$ connects -1 and 1
	\item $R$ connects $e^{-i\theta/2}$ and $e^{i\theta/2}$
\end{itemize}
where $\theta$ is the parameter for $\mathcal{S}$.
Let $\Gamma$ be the group generated by the reflections through these geodesics.
$$
\Gamma = \langle D_1, D_2, R \rangle
$$
(Note that we are abusing notation and letting $D_1, D_2$, and $R$ refer to the reflection through the geodesics as well as the geodesics themselves).
See Figure \ref{fd_reflection} for an example with $\theta = \pi/2$.
\begin{figure}
	\centering
	\includegraphics[width=0.6\linewidth]{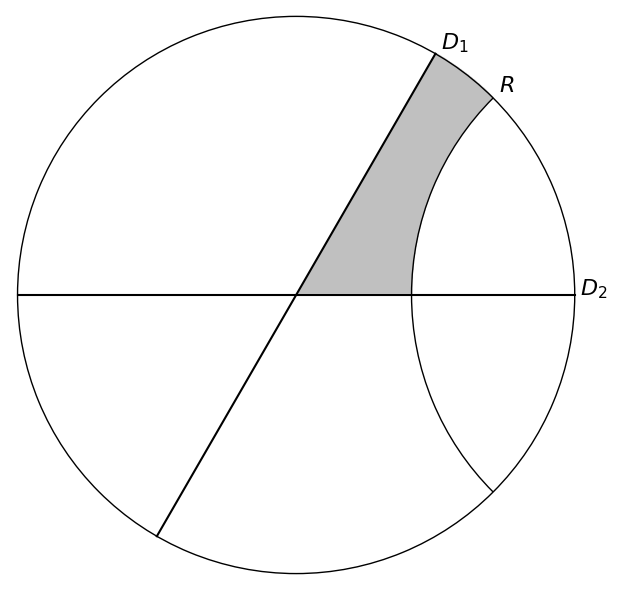}
	\caption{Fundamental Domain for $\Gamma$ with $\theta = \pi/2$}
	\label{fd_reflection}
\end{figure}

To see that $\Gamma$ contains $\mathcal{S}$, first observe that the composition $S = D_1D_2$ is just rotation by $2\pi/3$.
Thus, $S(R)$ and $S^2(R)$ give the other two circles in the Schottky group.
In particular, $S^{-1}RS$ and $SRS^{-1}$ give the reflections through those two circles.
Therefore, all three generators of $\mathcal{S}$ are contained in $\Gamma$.

In defense of the title of this section, we note that the index of $\mathcal{S}$ in $\Gamma$ is $[\Gamma : \mathcal{S}] = 6$.
This is most easily seen by noting that a fundamental domain for $\Gamma$ is one sixth of a fundamental domain for
$\mathcal{S}$.

\subsection{Doubling Across a Geodesic}

Since $\Gamma$ contains $\mathcal{S}$, any Maass form for $\Gamma$ is automatically a Maass form for $\mathcal{S}$.
In fact, it is possible to show that $\Gamma$ has the same bottom eigenvalue as the symmetric Schottky group, and the corresponding base eigenfunction for $\mathcal{S}$ does indeed have the required symmetries to be a Maass form on $\Gamma$.
We note however that both $\Gamma$ and $\mathcal{S}$ are reflection groups, and so they contain elements which are not M\"obius transformations.
If one wishes to work solely with orientation-preserving isometries, we can make use of a technique called \textit{doubling}.

As an example, we will double the group $\Gamma$ across $D_1$.
Specifically, consider
$$
\Gamma_0 = \langle D_1D_2, D_1R \rangle
$$
The doubled group then consists solely of M\"obius transformations, since they are the result of the composition of two reflections.
This is called \textit{doubling} across $D_1$ since the fundamental domain for the doubled group is exactly the old fundamental domain plus its reflection across $D_1$ (see Figure \ref{fd_reflection_doubled}).
\begin{figure}
	\centering
	\includegraphics[width=0.6\linewidth]{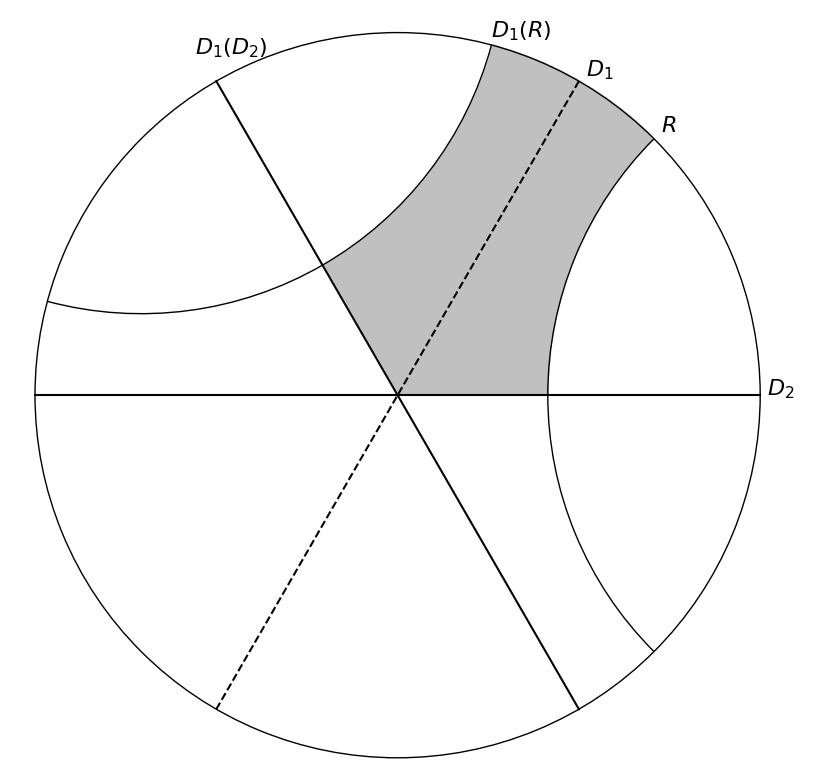}
	\caption{Fundamental Domain for $\Gamma_0$ with $\theta = \pi/2$}
	\label{fd_reflection_doubled}
\end{figure}

Next, we will show how $\Gamma_0$ can be realized as a matrix group.

\begin{lem}
	$\Gamma_0$ is isomorphic to the matrix group
	$$
	\left\langle
	\begin{pmatrix}
		e^{i\pi/3} & ~ \\
		~ & e^{-i\pi/3}
	\end{pmatrix},
	\begin{pmatrix}
		i\csc\frac{\theta}{2} & -i\cot\frac{\theta}{2} \\
		i\cot\frac{\theta}{2} & -i\csc\frac{\theta}{2}
	\end{pmatrix}
	\right\rangle < \textup{PSU}(1, 1)
	$$
\end{lem}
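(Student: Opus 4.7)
My plan is to produce explicit Möbius representatives for elements of $\Gamma_0$ and compare them to the two matrices on the right. Every element of $\Gamma_0$ is a composition of two reflections of $\mathbb{D}$, hence an orientation-preserving hyperbolic isometry, hence an element of $\text{PSU}(1,1)$; so the whole task reduces to verifying an equality of subgroups inside $\text{PSU}(1,1)$. Since $\text{SU}(1,1)\to\text{PSU}(1,1)$ has kernel $\{\pm I\}$, such an equality immediately yields the claimed isomorphism.

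First I would write the three reflections down explicitly. Since $D_1$ is the diameter making angle $\pi/3$ with the real axis, $D_1(z)=e^{2i\pi/3}\bar z$; likewise $D_2(z)=\bar z$. For $R$, I would note that the hyperbolic geodesic in $\mathbb{D}$ from $e^{-i\theta/2}$ to $e^{i\theta/2}$ is the arc of the Euclidean circle orthogonal to the unit circle through these two points; the standard formula for orthogonal circles puts the Euclidean center at $\sec(\theta/2)$ on the real axis with radius $\tan(\theta/2)$, yielding
\[
R(z)=\sec(\theta/2)+\frac{\tan^{2}(\theta/2)}{\bar z-\sec(\theta/2)}=\frac{\sec(\theta/2)\,\bar z-1}{\bar z-\sec(\theta/2)}.
\]
Composing in the convention $fg=f\circ g$ used in the previous section (under which $D_1D_2(z)=e^{2i\pi/3}z$ is rotation by $2\pi/3$, as asserted there), one obtains $D_1D_2(z)=e^{2i\pi/3}z$ and $D_2R(z)=\overline{R(z)}=(z-\cos(\theta/2))/(z\cos(\theta/2)-1)$.

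Next I would verify that the two displayed matrices reproduce exactly these two Möbius maps. The diagonal matrix has determinant $1$, is trivially of the form $\begin{psmallmatrix}a&b\\ \bar b&\bar a\end{psmallmatrix}$, and acts as $z\mapsto e^{i\pi/3}z/e^{-i\pi/3}=e^{2i\pi/3}z$, reproducing $D_1D_2$. For the second matrix, the identities $|i\csc(\theta/2)|^{2}-|i\cot(\theta/2)|^{2}=\csc^{2}(\theta/2)-\cot^{2}(\theta/2)=1$ and the visible Hermitian form place it in $\text{SU}(1,1)$; cancelling $i$ in its Möbius action and then multiplying numerator and denominator by $\sin(\theta/2)$ produces
\[
z\mapsto\frac{\csc(\theta/2)\,z-\cot(\theta/2)}{\cot(\theta/2)\,z-\csc(\theta/2)}=\frac{z-\cos(\theta/2)}{z\cos(\theta/2)-1}=D_2R(z).
\]

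Finally I would reconcile the two generating sets. The second matrix represents $D_2R$, not $D_1R$, but using only $D_2^{2}=\mathrm{id}$ one computes $D_1R=(D_1D_2)(D_2R)$ and $D_2R=(D_1D_2)^{-1}(D_1R)$, so $\langle D_1D_2,D_1R\rangle=\langle D_1D_2,D_2R\rangle$ as subgroups of the isometry group. The matrix group in the statement therefore coincides with $\Gamma_0$ inside $\text{PSU}(1,1)$, giving the isomorphism. The main obstacle in the argument is essentially notational rather than technical: one must keep the composition convention straight and recognize that the second listed matrix realizes $D_2R$, so that the identity $D_1R=(D_1D_2)(D_2R)$ is what bridges the listed generators of $\Gamma_0$ with the listed matrix generators; once this is in hand, all remaining work is routine trigonometric computation.
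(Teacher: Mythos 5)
Your proposal is correct and follows essentially the same route as the paper: both arguments hinge on recognizing that the second displayed matrix realizes $D_2R = (D_1D_2)^{-1}D_1R$ rather than $D_1R$, and then verifying the two Möbius actions by the same trigonometric manipulations (you clear denominators by $\sin(\theta/2)$ where the paper multiplies through by $i\cot(\theta/2)$, which is the same computation read in the other direction). Your additional explicit checks of the $\mathrm{SU}(1,1)$ membership conditions and of the generating-set identity are details the paper leaves implicit, but the argument is the same.
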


\begin{proof}
	First note that the composition $D_1D_2$ is just rotation by angle $2\pi/3$.
	This gives the first matrix in the lemma statement.
	Next, we replace $D_1R$ with $(D_1D_2)^{-1}D_1R = D_2R$ as a generator.
	This is helpful since the action of $D_2$ is simply complex conjugation.
	Since the circle defined by the geodesic $R$ has center $\sec(\theta/2)$ and radius $\tan(\theta/2)$, we find that reflection through $R$ can be written as
	$$
	R(z) = \frac{\tan^2(\theta/2)}{\bar{z} - \sec(\theta/2)} + \sec(\theta/2)
	$$
	We can now explicitly write down the action of $D_2R$.
	$$
	D_2R(z) = \overline{R(z)} =
	\frac{\sec(\theta/2)z - 1}{z - \sec(\theta/2)}
	$$
	Finally, note that multiplying the numerator and denominator by $i\cot(\theta/2)$ converts this to a M\"obius transformation where the corresponding matrix is in $\text{PSU}(1, 1)$ (this is the matrix group which is conjugate via the Cayley transform to $\text{PSL}(2, \mathbb{R})$).
	Specifically,
	$$
	D_2R(z) = \frac{i\csc(\theta/2)z - i\cot(\theta/2)}{i\cot(\theta/2)z - i\csc(\theta/2)}
	$$
	This action is given by the second matrix in the lemma statement, and so the proof is finished.
\end{proof}

Let us briefly note that it is not strictly necessary to move from $\Gamma$ to $\Gamma_0$ in order to apply Hejhal's algorithm.
A Maass form on a group is automatically a Maass form for any of its subgroups, but the converse of this statement is not true.
By searching for Maass forms of $\Gamma_0$ and $\Gamma$, we don't guarantee that we find all forms for $\mathcal{S}$ (in fact, note that Maass forms for $\Gamma_0$ are not necessarily invariant under the action of $\mathcal{S}$).
However, it was found experimentally that these three groups indeed share many Maass forms (see Section \ref{ch:Results} for methods to verify that a given set of Fourier coefficients define a Maass form).
Moreover, while Hejhal's algorithm failed to converge when using the original group $\mathcal{S}$, it succeeded when utilizing the extra structure provided by $\Gamma$ and $\Gamma_0$.

\subsection{A Flare Domain for $\Gamma_0$}

So far in this section, we have worked solely in the disk model of hyperbolic space.
However, recall that our flare domains are defined in polar coordinates in the upper half plane model.
In this section, we show how to move $\Gamma_0$ to a flare domain. 

First, recall that we can map between the upper half plane model and the disk model via the \textit{Cayley transform} and its inverse
$$
C(z) = \frac{z - i}{z + i} ~~~~~~~~ C^{-1}(w) = i\cdot\frac{1 + w}{1 - w}
$$
Moreover, it can be shown that $C$ defines an isometry between the unit tangent bundles $T^1\mathbb{H}$ and $T^1\mathbb{D}$.

Next, recall that the orientation-preserving isometries on $\mathbb{H}$ can be described as the set of M\"obius transformations in $\text{PSL}(2, \mathbb{R})$ (this is discussed in more detail in section \ref{sec:FuchsianGroups}).
For $g \in \text{PSL}(2, \mathbb{R})$ acting on $\mathbb{H}$, the equivalent action on $\mathbb{D}$ is $CgC^{-1}$.
One can show that
$$
C\text{PSL}(2, \mathbb{R})C^{-1} = \text{PSU}(1, 1) :=
\left\{ 
\begin{pmatrix}
	\alpha & \beta \\
	\bar{\beta} & \bar{\alpha}
\end{pmatrix} \in M(2, \mathbb{C}) : |\alpha|^2 - |\beta|^2 = 1
\right\} / \{\pm I\}
$$
where we are thinking of $C$ as the matrix $\begin{psmallmatrix} 1 & -i \\ 1 & i \end{psmallmatrix} \in \text{PSL}(2, \mathbb{C})$, i.e., the usual identification of M\"obius transformations with matrices.

We are now ready to map the matrix group $\Gamma_0$ to the upper half plane model.
One finds that
$$
C^{-1}D_1D_2C =
\begin{pmatrix}
	\frac{1}{2} & \frac{\sqrt{3}}{2} \\
	-\frac{\sqrt{3}}{2} & \frac{1}{2}
\end{pmatrix}
~~~~~~~~
C^{-1}D_2RC =
\begin{pmatrix}
	~ & \cot\frac{\theta}{4} \\
	-\tan\frac{\theta}{4} & ~
\end{pmatrix}
$$
and so these are the generators in $\text{PSL}(2, \mathbb{R})$.
To get a fundamental domain in $\mathbb{H}$, we simply take the inverse Cayley transform of the fundamental domain in $\mathbb{D}$ (see Figure \ref{fd_doubled_UHP}).
\begin{figure}
	\centering
	\includegraphics[width=\linewidth]{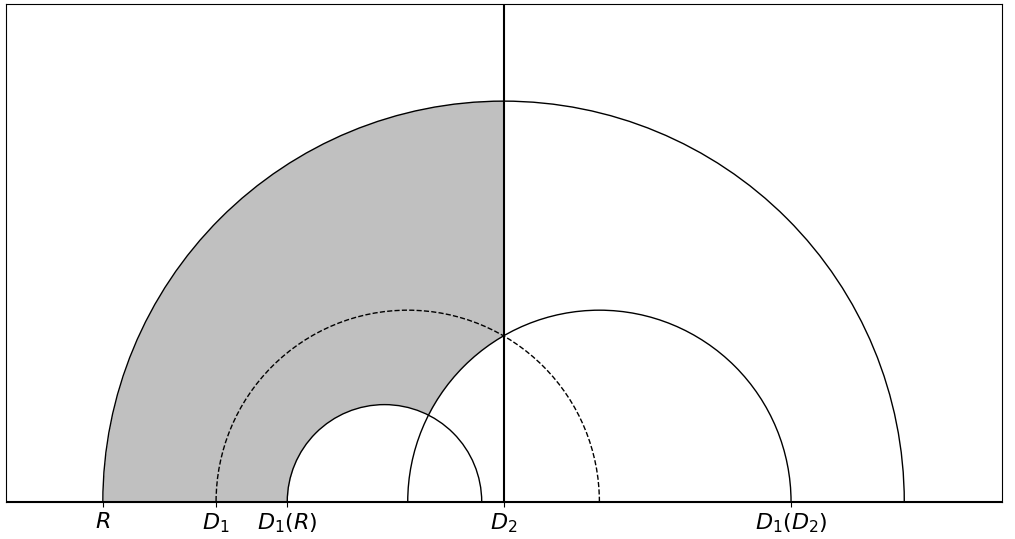}
	\caption{Fundamental domain in the upper half plane model for $C^{-1}\Gamma_0C$ with $\theta = \pi/2$}
	\label{fd_doubled_UHP}
\end{figure}

Next, we want to use the flare present in $\Gamma_0$'s fundamental domain to map over to a flare domain.
The first step is to identify the matrix in $\Gamma_0$ whose axis cuts off the flare (see Section \ref{sec:FlareDef}).
In the case of $\Gamma_0$, one finds that $D_1R$ is the appropriate transformation (see Figures \ref{cover_with_flare} and \ref{cover_with_flare_UHP} for geometric verification).
\begin{figure}
	\centering
	\includegraphics[width=0.7\linewidth]{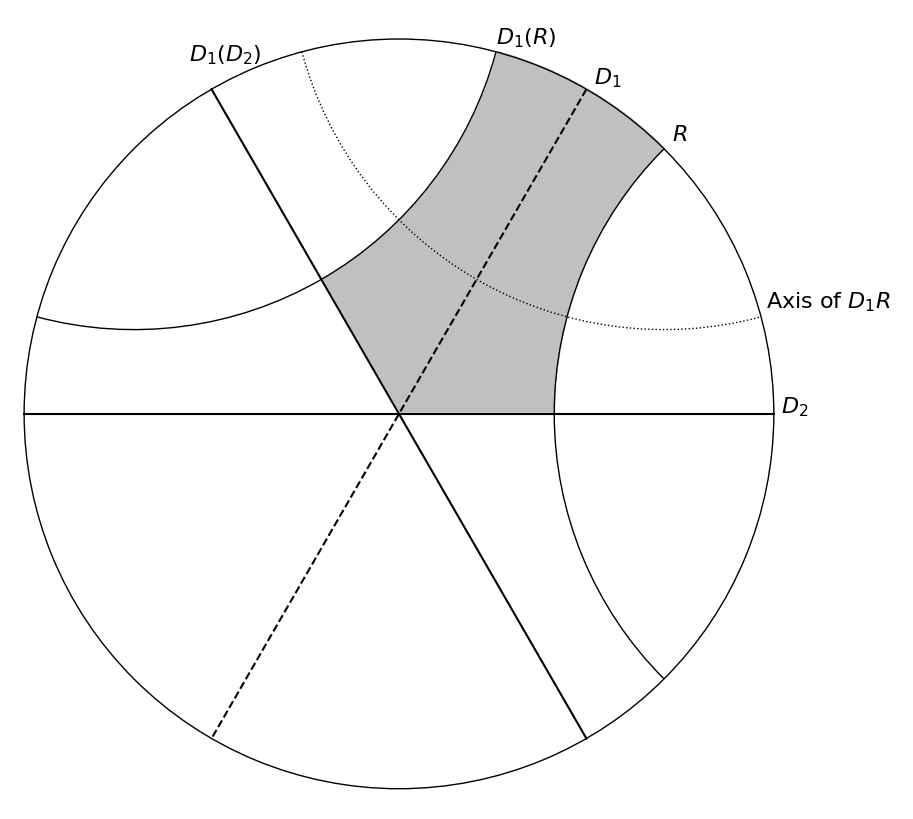}
	\caption{Axis of $D_1R$ cuts off the flare}
	\label{cover_with_flare}
\end{figure}
\begin{figure}
	\centering
	\includegraphics[width=\linewidth]{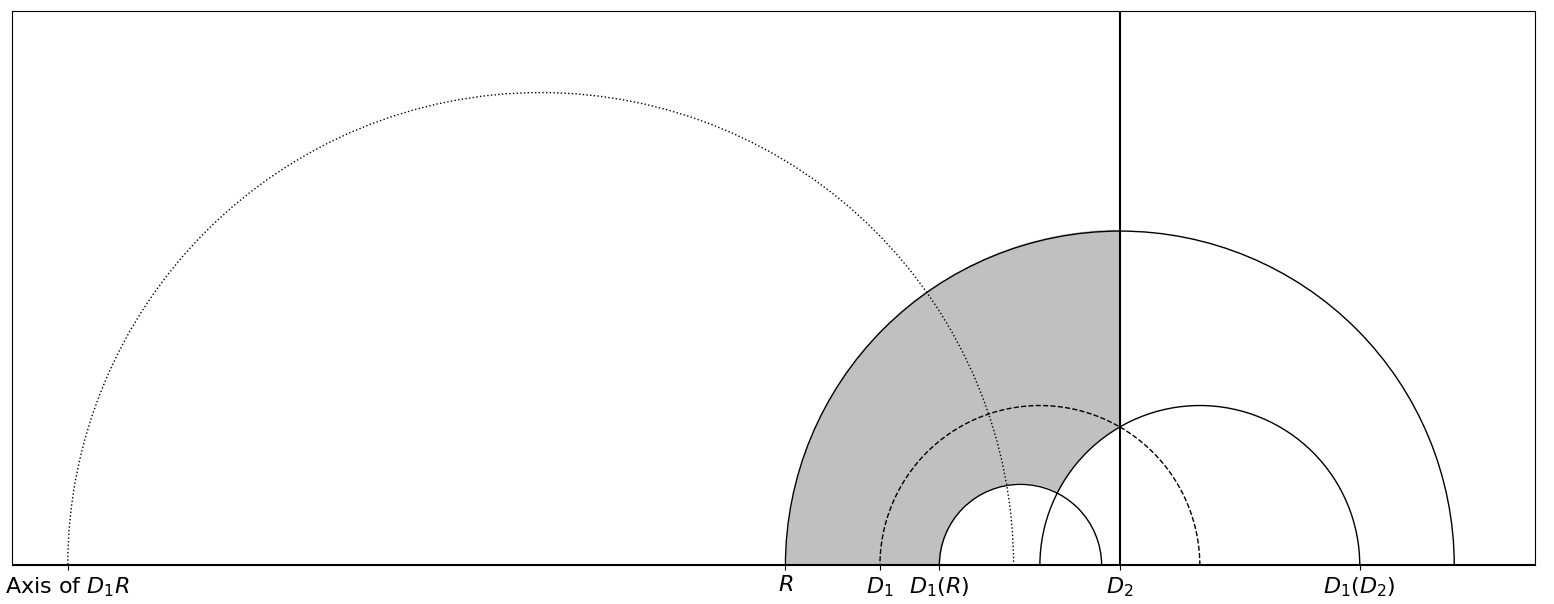}
	\caption{Axis of $D_1R$ cuts off the flare}
	\label{cover_with_flare_UHP}
\end{figure}

In matrix form, one finds that
$$
D_1R =
\frac{1}{2}
\begin{pmatrix}
	-\sqrt{3} + i\csc\frac{\theta}{2} & \sqrt{3} - i\cot\frac{\theta}{2} \\
	\sqrt{3} + i\cot\frac{\theta}{2} & -\sqrt{3} - i\csc\frac{\theta}{2}
\end{pmatrix}
$$
which, in the upper half plane model, is
$$
C^{-1}D_1RC =
\frac{1}{2}
\begin{pmatrix}
	-\sqrt{3}\tan\frac{\theta}{4} & \cot\frac{\theta}{4} \\
	-\tan\frac{\theta}{4} & -\sqrt{3}\cot\frac{\theta}{4}
\end{pmatrix}
$$

In particular, we can utilize the flare in the upper half plane model to map over to a flare domain (again, see Section \ref{sec:FlareDef} for a discussion of this map).
Letting $z_1 < z_2$ be the endpoints of the axis defined by $C^{-1}D_1RC$ (in the upper half plane model), and letting $t$ be the left end point of $R$ (again, in $\mathbb{H}$), we apply the map
$$
U(z) = \left( \frac{t - z_2}{t - z_1} \right) \frac{z - z_1}{z - z_2}
$$
to our fundamental domain and end up with the flare domain shown in Figure \ref{cover_flare}.
\begin{figure}
	\centering
	\includegraphics[width=\linewidth]{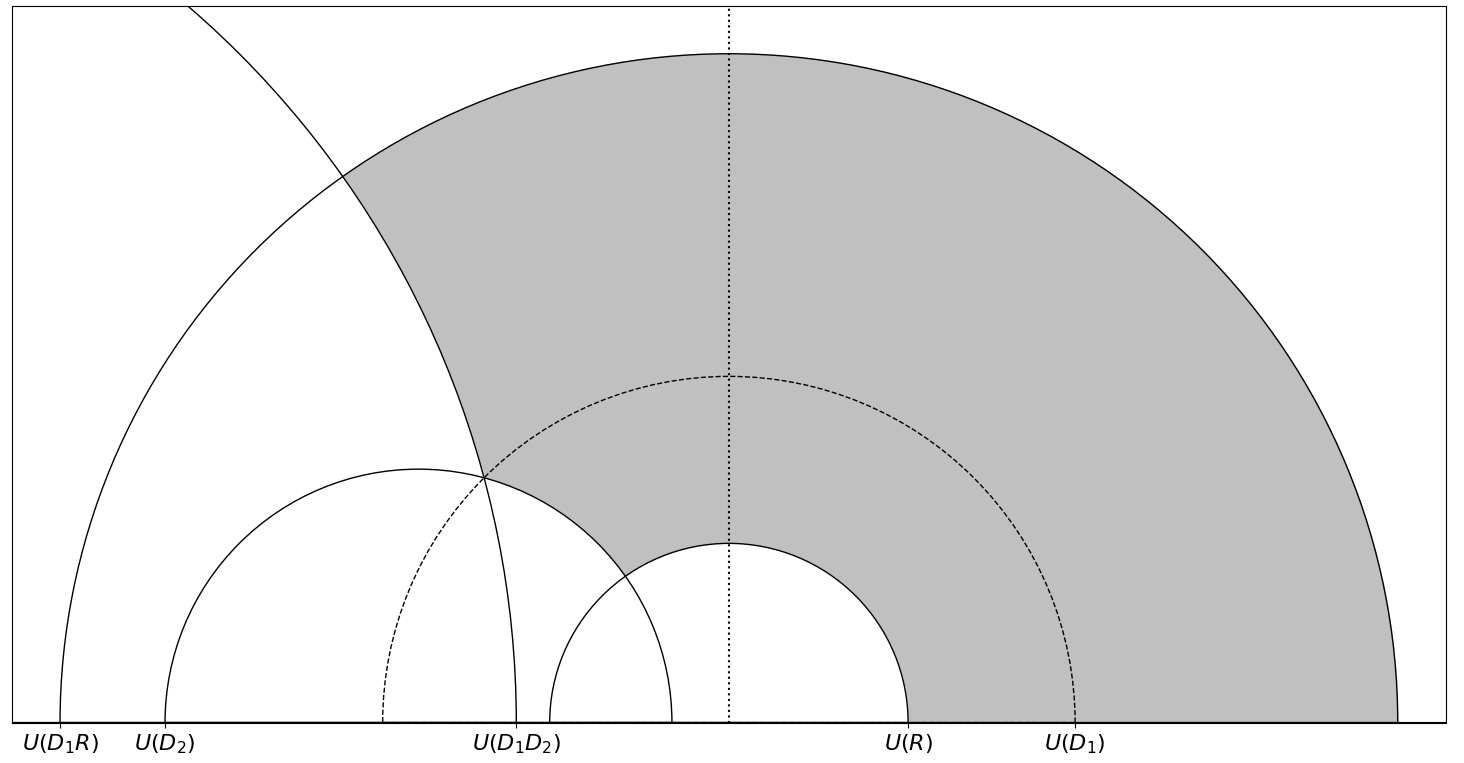}
	\caption{Flare domain for the group $\Gamma_0$ with parameter $\theta = \frac{\pi}{2}$}
	\label{cover_flare}
\end{figure}
It is this flare domain which exhibits the logarithmic Fourier expansion needed for our extension of Hejhal's algorithm.


\section{Hejhal's Algorithm applied to Infinite Volume Hecke Groups}\label{ch:Hecke}

In this section, we apply the new algorithm to groups which contain both a flare \textit{and a cusp}.
As discussed in section \ref{sec:MultiExpansion}, this involves solving for the Fourier coefficients of different expansions simultaneously.
Heuristically, utilizing multiple expansions should aid in the convergence of the algorithm.
This is because each expansion has its own built-in invariance; cuspidal expansions exhibit invariance under certain parabolic elements of the group, and flare expansions exhibit invariance under specific hyperbolic transformations.
By forcing our function to exhibit both such behaviors, we severely limit the search space in which to find a Maass form.

Our example for this section is that of infinite volume Hecke groups, which we defined in section \ref{sec:IntroHecke}.
We will show how to move from the standard fundamental domain to a flare domain, then we move to a discussion on choosing test points for Hejhal's algorithm.
When multiple expansions are involved, the choice of test points becomes even more important to the algorithm.
In particular, each expansion must have admissible test points; otherwise, we have no information about the Fourier coefficients for that expansion.

\subsection{Fundamental Domain for Hecke Groups}

First, we describe a fundamental domain for an infinite volume Hecke group which we will use to construct a flare domain.

\begin{lem}\label{fund_Hecke}
	Let $\Gamma_r$ be the Fuchsian group generated by $z \mapsto z + 1$ and $z \mapsto -r^2/z$ with $0 < r < 1/2$.
	Then a fundamental domain for $\Gamma_r$ is given by
	$$
	\mathcal{F}_r = \{ z = x + iy \in \mathbb{H} : 0 < x < 1, |z| > r, |z - 1| < r \}
	$$
\end{lem}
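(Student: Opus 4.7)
The plan is to verify the two defining properties of a fundamental domain for $\mathcal{F}_r$: every $\Gamma_r$-orbit in $\mathbb{H}$ meets $\overline{\mathcal{F}_r}$, and no two interior points of $\mathcal{F}_r$ are $\Gamma_r$-equivalent. Both properties will be established by leveraging an explicit relationship between $\mathcal{F}_r$ and the standard fundamental domain $\mathcal{F} = \{-1/2 < x < 1/2,\, |z|>r\}$ introduced in Section \ref{sec:IntroHecke}. Note first that since $r < 1/2$, the condition $|z-1|<r$ already forces $\operatorname{Re}(z) > 1-r > 1/2 > r$, so $|z|>r$ is automatic and $\mathcal{F}_r$ reduces to the quarter disk $\{|z-1|<r,\, \operatorname{Re}(z)<1,\, \operatorname{Im}(z)>0\}$.

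For the orbit property, I would provide an explicit pullback algorithm: alternately apply $T^{\pm 1}$ to control the real part, and the hyperbolic element $TS:\, z \mapsto 1 - r^2/z$ to move the point into the small disk $\{|z-1|<r\}$. The key computational observation is that $TS$ interchanges the exterior $\{|z|>r\}$ with the interior disk $\{|z-1|<r\}$, since $|TS(z)-1| = r^2/|z|$; in particular, it pairs the quarter arc of $|z|=r$ with $\arg z \in (0,\pi/2)$ onto the quarter arc $|z-1|=r$ bounding $\mathcal{F}_r$. Iterating these generators with a standard descent argument on imaginary part (or on Euclidean distance to $1$) brings any $z \in \mathbb{H}$ into $\overline{\mathcal{F}_r}$.

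For the non-overlap property, I would identify the side pairings of $\mathcal{F}_r$: the vertical edge $\{x=1,\, 0<y<r\}$ and the quarter arc $\{|z-1|=r,\, 1-r<x<1\}$ should be paired by an element of $\Gamma_r$ (the natural candidate being $TS$ or a close variant), while the real-axis arc serves as a funnel boundary. I would then apply Poincar\'e's polygon theorem, as used for Schottky groups in Section \ref{ch:Schottky}, noting that all corners of $\mathcal{F}_r$ lie on $\partial\mathbb{H}$ (so the vertex-angle conditions are trivially satisfied, with the ideal vertex at $1$ corresponding to a cusp or funnel endpoint), and checking that the pairing elements together generate $\Gamma_r$.

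The main obstacle will be pinning down the correct side pairing realising the identification, and then showing $\gamma(\mathcal{F}_r)\cap\mathcal{F}_r$ lies on $\partial\mathcal{F}_r$ for every non-identity reduced word $\gamma$ in $T^{\pm 1}$ and $S$. A practical route is to reduce to the standard $\mathcal{F}$ by cut-and-paste: first $T$-shift $\mathcal{F}\cap\{x<0\}$ rightward to obtain the intermediate region $\{0<x<1,\, |z|>r,\, |z-1|>r\}$, then use the inversion-like element $TS$ to fold the exterior-of-circle region onto the interior disk $\{|z-1|<r\}$ demanded by $\mathcal{F}_r$. Verifying that this final folding step is a bijection onto the quarter disk is where the real work lies.
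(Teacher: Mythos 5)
The set in the lemma statement contains a typo---the intended condition is $|z-1|>r$, not $|z-1|<r$---and your decision to take the printed inequality at face value is where the proposal breaks. Your own opening observation is the tell: with $|z-1|<r$ the condition $|z|>r$ is redundant and $\mathcal{F}_r$ collapses to a small quarter-disk at $z=1$, which is incompatible with the surrounding text (the flare of $\mathcal{F}_r$ is the region \emph{between} the circles $|z|=r$ and $|z-1|=r$ over the real interval $(r,1-r)$, and Figure \ref{equiv_domains} exhibits $\mathcal{F}_r$ as a translate-and-glue of $\mathcal{F}$). Concretely, the quarter-disk $Q=\{|z-1|<r,\ x<1\}\cap\mathbb{H}$ is \emph{not} a fundamental domain, so the step you defer as ``the real work''---that $TS:z\mapsto 1-r^2/z$ carries $\{0<x<1,\ |z|>r,\ |z-1|>r\}$ bijectively onto $Q$---is false and cannot be repaired. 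Indeed, since $|TS(z)-1|=r^2/|z|$ and $TS$ sends the geodesic $\{x=0\}$ to $\{x=1\}$, $TS$ maps $\{x>0\}\cap\{|z|>r\}\cap\mathbb{H}$ onto $Q$; but the intermediate region is a \emph{proper} subset of that half-plane-minus-disk (it omits all of $\{x>1\}$), so its $TS$-image is a proper subregion of $Q$: the wall $x=1$ is sent to the small circle $|w-(1-\tfrac{r^2}{2})|=\tfrac{r^2}{2}$, which cuts off an uncovered open piece of $Q$ near $w=1$ (containing, e.g., $TS(2+i)$). Since that image is itself a full fundamental domain (a group translate of one), $Q$ strictly contains a fundamental domain plus a nonempty open leftover, and hence contains pairs of distinct $\Gamma_r$-equivalent interior points.

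With the inequality corrected to $|z-1|>r$, your scaffolding is essentially the paper's proof and the problematic folding step disappears entirely. The paper establishes discreteness by applying Poincar\'e's polygon theorem to the index-$2$ reflection cover $\mathcal{R}$ (as you suggest), shows that $\mathcal{F}=\{-1/2<x<1/2,\ |z|>r\}$ is a fundamental domain via a pullback/descent algorithm on the imaginary part together with a volume-and-index comparison against $\mathcal{R}$ to rule out overlap, and then obtains $\mathcal{F}_r=\{0<x<1,\ |z|>r,\ |z-1|>r\}$ from $\mathcal{F}$ by exactly the single cut-and-paste you describe (translate $\mathcal{F}\cap\{x<0\}$ by $+1$); no application of $TS$ is needed.
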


\begin{proof}
	First, we argue that an infinite volume Hecke group acts discretely on hyperbolic space.
	As in the case of the symmetric Schottky groups, this follows from Poincar\'e's Theorem on fundamental polygons (see \cite{beardon2012geometry} section 9.8).
	Consider the reflection group $\mathcal{R}$ generated by reflections through the geodesics $g_0 = \{iy \in \mathbb{H} \}, g_1 = \{1/2 + iy \in \mathbb{H}\},$ and $g_r = \{z \in \mathbb{H} : |z| = r\}$.
	One fundamental polygon for this is the region bounded by the three geodesics.
	The only vertex of this polygon occurs at $\infty$ between $g_0$ and $g_1$, and the angle there is $\pi/\infty$.
	Thus, Poincar\'e's Theorem applies.
	On the other hand, one may check directly from the generators that $\Gamma_r$ is an index-2 subgroup of $\mathcal{R}$.
	So, $\Gamma_r$ also acts discretely on hyperbolic space.
	
	Next, we show that the set
	$$
	\mathcal{F} := \left\{ z = x + iy : -\frac{1}{2} < x < \frac{1}{2}, |z| > r \right\}
	$$
	is a fundamental domain for $\Gamma_r$ (see figure \ref{Hecke_orig}).
	Let $z_0 \in \mathbb{H}$ be arbitrary, and consider the following sequence of moves
	\begin{enumerate}
		\item By repeated application of $z \mapsto z + 1$ (or its inverse $z \mapsto z - 1$), move $z_0$ to a point $z^*$ with $-1/2 \leq \text{Re}(z^*) \leq 1/2$
		\item If $|z^*| < r$, apply $z \mapsto -r^2/z$
		\item Repeat steps 1 and 2
	\end{enumerate}
	If at move 2 the point $z^*$ has $|z^*| \geq r$, then we have found a point in the closure of $\mathcal{F}$ which is equivalent under $\Gamma_r$ to $z_0$.
	On the other hand, note that move 1 does not affect the imaginary part of of $z_0$, and move 2 strictly increases the imaginary part of $z^*$.
	If this sequence of moves were to repeat indefinitely, then we would find an infinite number of distinct points in the compact region
	$$
	\{ z \in \mathbb{H} : \text{Im}(z) \geq \text{Im}(z_0), |z| \leq |r| \}
	$$
	which are equivalent under $\Gamma_r$.
	This would contradict that $\Gamma_r$ acts discretely on $\mathbb{H}$, and therefore the steps will terminate in finitely many moves.
	
	The argument above shows that any $z \in \mathbb{H}$ is equivalent under the action of $\Gamma_r$ to a point in $\overline{\mathcal{F}}$.
	Thus, $\mathcal{F}$ contains a fundamental domain for $\Gamma_r$.
	Conversely, one easily checks that the hyperbolic volume of $\mathcal{F}$ is exactly twice the volume of a fundamental polygon for the reflection group $\mathcal{R}$ described above.
	Since $[\mathcal{R} : \Gamma_r] = 2$, a fundamental domain for $\Gamma_r$ cannot be strictly smaller than $\mathcal{F}$.
	This completes the proof that $\mathcal{F}$ is a fundamental domain for $\Gamma_r$.
	
	To finish, we simply note that applying $z \mapsto z + 1$ to the set $\{ x + iy \in \mathcal{F} : x < 0 \}$ takes $\mathcal{F}$ to $\mathcal{F}_r$ (up to a set of measure zero).
	\begin{figure}
		\centering
		\begin{subfigure}{.5\textwidth}
			\centering
			\includegraphics[width=.9\linewidth]{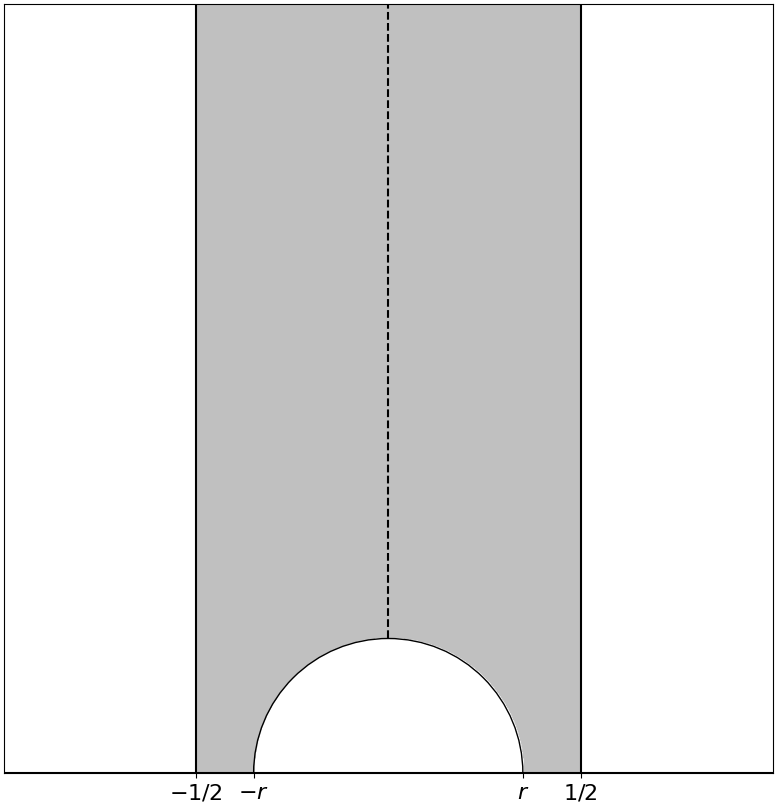}
			\caption{$\mathcal{F}$}
			\label{equiv_domains:suba}
		\end{subfigure}%
		\begin{subfigure}{.5\textwidth}
			\centering
			\includegraphics[width=.9\linewidth]{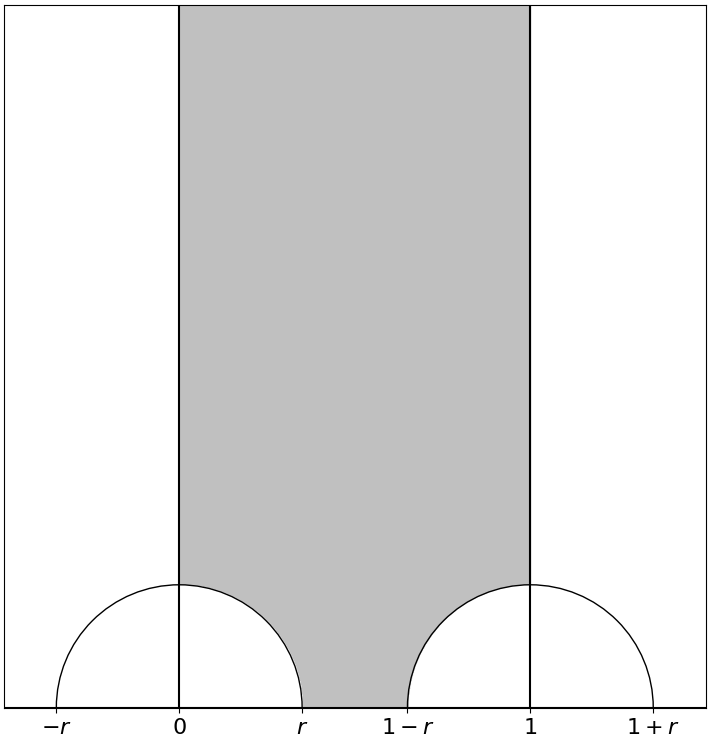}
			\caption{$\mathcal{F}_r$}
			\label{equiv_domains:subb}
		\end{subfigure}
		\caption{Translating the left half of $\mathcal{F}$ by 1 maps onto $\mathcal{F}_r$.}
		\label{equiv_domains}
	\end{figure}
	See Figure \ref{equiv_domains} to view the two fundamental domains side-by-side.
	
\end{proof}

We prefer the fundamental domain $\mathcal{F}_r$ to $\mathcal{F}$ because it is more clear that the group exhibits only one flare.
In Figure \ref{equiv_domains:suba}, it appears at first glance that there are two separate infinite volume portions of the fundamental domain.
However, since the $-1/2$ and $1/2$ lines are identified via $z \mapsto z + 1$, this is in fact just one continuous region bordering the real axis.

Let us also note that the proof above provides a pullback algorithm for $\mathcal{F}_r$.
Given any point $z \in \mathbb{H}$, we saw an algorithm to find a point $z^* \in \mathcal{F}$ which is equivalent to $z$ under $\Gamma_r$.
If $\text{Re}(z^*) \leq 1/2$, then $z^* + 1 \in \mathcal{F}_r$.
Otherwise, $z^*$ is already in $\mathcal{F}_r$.

\subsection{A Flare Domain for $\Gamma_r$}

Identifying M\"obius transformations with $\text{PSL}(2, \mathbb{R})$ in the usual manner, we can write our Hecke group in terms of matrices.
$$
\Gamma_r = \left\langle
\begin{pmatrix}
	1 & 1 \\
	0 & 1
\end{pmatrix},
\begin{pmatrix}
	0 & -r \\
	1/r & 0
\end{pmatrix}
\right\rangle
$$
where $0 < r < 1/2$.
To map to a flare domain, we require a hyperbolic matrix in $\Gamma_r$.
Ideally, the matrix we use would have an axis which cuts off the flare in our chosen fundamental domain.

Define the geodesics $g_1 = \{z \in \mathbb{H} : |z| = r\}$ and $g_2 = \{z \in \mathbb{H} : |z - 1| = r\}$.
Since $z \mapsto -r^2/z$ reflects $g_1$ across the imaginary axis and $z \mapsto z + 1$ maps $g_1$ to $g_2$, we note that the composition of these maps identifies one side of the flare with the other.
In other words, we are interested in the following matrix
$$
A :=
\begin{pmatrix}
	1 & 1 \\
	0 & 1
\end{pmatrix}
\begin{pmatrix}
	0 & -r \\
	1/r & 0
\end{pmatrix} =
\begin{pmatrix}
	1/r & -r \\
	1/r & 0
\end{pmatrix}
$$
Since $0 < r < 1/2$, we have that tr$(A) = 1/r > 2$, so $A$ is indeed hyperbolic.
We claim that diagonalizing this matrix results in a flare domain for $\Gamma_r$.

\begin{lem}\label{hecke_flare_lemma}
	Define $A$ as above, and let $C$ be $A$'s axis (i.e. $C$ is the geodesic between the fixed points of $A$).
	Let $z_1 < z_2$ be the endpoints of $A$ in $\mathbb{R}$, and let $U$ be the matrix whose action on $\mathbb{H}$ is
	$$
	U(z) = c\frac{z - z_1}{z - z_2}
	$$
	where $c = (r - z_2)/(r - z_1)$.
	Then $C$ cuts off a flare in $\mathcal{F}_r$, $UAU^{-1}$ is diagonal, and $U\Gamma_rU^{-1}$ acting on $U\mathbb{H}$ exhibits a flare domain.
	Furthermore, $U(\mathcal{F}_r)$ is itself already a flare domain.
\end{lem}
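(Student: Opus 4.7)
The plan is to reduce the lemma to the general result in Section \ref{sec:FlareDef} on conjugating to a flare domain. That earlier lemma produces exactly the $U$ written in the statement, and certifies exactly the conclusion about $U(\mathcal{F}_r)$, provided that $A$'s axis $C$ intersects the two semicircular walls of the infinite volume portion of $\mathcal{F}_r$ at right angles. So the entire proof reduces to (i) pinning down the fixed points of $A$, (ii) verifying perpendicularity, and (iii) reading off that $UAU^{-1}$ is diagonal.

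First I would solve $A(z) = z$: writing $A(z) = (z - r^2)/z = 1 - r^2/z$, this becomes $z^2 - z + r^2 = 0$, so
$$
z_{1,2} = \frac{1 \mp \sqrt{1 - 4r^2}}{2}
$$
The discriminant is positive because $0 < r < 1/2$, which is also why $A$ is hyperbolic ($\textup{tr}(A) = 1/r > 2$). The identities $z_1 + z_2 = 1$ and $z_1 z_2 = r^2$ drop out for free, and a short check shows $0 < z_1 < r$ and $1 - r < z_2 < 1$. Consequently the endpoints of $C$ interleave with those of both $g_1 = \{|z| = r\}$ (endpoints $\pm r$) and $g_2 = \{|z - 1| = r\}$ (endpoints $1 \pm r$), so $C$ crosses each of them in exactly one point of $\mathbb{H}$.

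Next I would verify perpendicularity. $C$ is a semicircle with center $(z_1 + z_2)/2 = 1/2$ and radius $(z_2 - z_1)/2 = \sqrt{1 - 4r^2}/2$. The standard criterion for two real-centered circles of centers $m_1, m_2$ and radii $\rho_1, \rho_2$ to be orthogonal is $(m_1 - m_2)^2 = \rho_1^2 + \rho_2^2$, and in both the $g_1$ and $g_2$ cases this reduces to the identity $1/4 = r^2 + (1 - 4r^2)/4$. At this point all three hypotheses of the lemma in Section \ref{sec:FlareDef} are in place (with $t = r$, the rightmost endpoint of $g_1$ on the real axis), and that lemma hands us precisely the $U$ in the statement together with the fact that $U(\mathcal{F}_r)$ is a flare domain for $U\Gamma_r U^{-1}$, which acts on $U\mathbb{H} = \mathbb{H}$.

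The only remaining piece is diagonality of $UAU^{-1}$, and that is immediate from $U(z_1) = 0$ and $U(z_2) = \infty$: the conjugate fixes $0$ and $\infty$, so it acts as $z \mapsto \kappa z$ for some $\kappa > 1$, which in matrix form is diagonal (with $\sqrt{\kappa}$ equal to the larger eigenvalue of $A$). The main point to watch out for is the perpendicularity calculation — done naively in terms of $r$ alone it is unpleasant, but the relations $z_1 + z_2 = 1$ and $z_1 z_2 = r^2$ collapse it to a single line.
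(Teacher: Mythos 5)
Your proposal is correct and follows essentially the same route as the paper: compute the fixed points of $A$ from $z^2 - z + r^2 = 0$, check that the axis $C$ meets $g_1$ and $g_2$ orthogonally, and deduce diagonality of $UAU^{-1}$ from $U(z_1) = 0$ and $U(z_2) = \infty$. The only differences are expository --- you supply the orthogonal-circles computation $(m_1 - m_2)^2 = \rho_1^2 + \rho_2^2$ that the paper merely asserts, and you obtain the conclusion that $U(\mathcal{F}_r)$ is a flare domain by invoking the general lemma of Section \ref{sec:FlareDef} (with $t = r$), whereas the paper verifies it by applying $U$ directly to points of $\mathcal{F}_r$ and appealing to a figure.
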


\begin{proof}
	A straightforward computation gives the fixed points of $A$.
	$$
	z_1 = \frac{1 - \sqrt{1 - 4r^2}}{2} ~~~~ z_2 = \frac{1 + \sqrt{1 - 4r^2}}{2}
	$$
	Moreover, one can show that $z_1 < r < 1 - r < z_2$ and that $C$ is orthogonal to $g_1$ and $g_2$.
	This is enough to conclude that $C$ cuts off the flare for $\mathcal{F}_r$; see Figure \ref{FrWithCs} for a visual.
	\begin{figure}[h]
		\centering
		\includegraphics[width=0.8\linewidth]{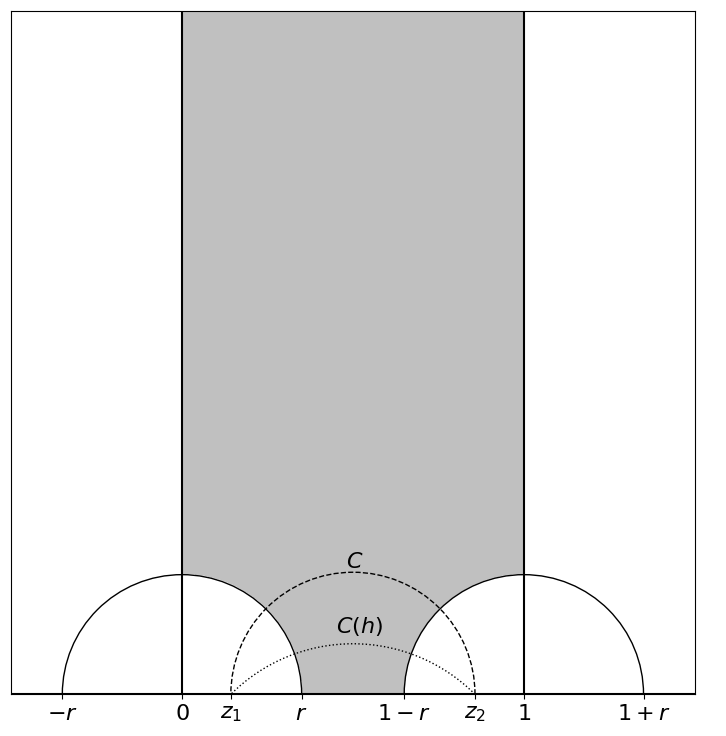}
		\caption{$\mathcal{F}_r$ for $r = 7/20$ with $C$ and $C(h)$ for some $h > 0$}
		\label{FrWithCs}
	\end{figure}
	
	Next, we argue that $UAU^{-1}$ is diagonal.
	One could of course check this directly, but we proceed by a more geometric argument which gives greater insight.
	First, notice that $U$ is chosen so that $U(z_1) = 0$ and $U(z_2) = \infty$.
	This gives us that $UAU^{-1}(0) = 0$ and $UAU^{-1}(\infty) = \infty$.
	The elements of $SL(2, \mathbb{R})$ which stabilize infinity have bottom-left entry equal to $0$, so we conclude that $UAU^{-1}$ has the form
	$\begin{psmallmatrix}
		a & b \\
		0 & d
	\end{psmallmatrix}$
	where $ad = 1$.
	Since we also have that $UAU^{-1}(0) = b/d$, we conclude that $b = 0$.
	So this matrix is diagonal, as claimed.
	
	Thus far, we have found a hyperbolic matrix $A \in \Gamma_r$ and a matrix $U \in SL(2, \mathbb{R})$ which diagonalizes it.
	Moreover, since $c$ is chosen so that $U(r) = 1$ and $\kappa := U(1 - r) > 1$, we have that \textit{some} fundamental domain for the action of $U\Gamma_rU^{-1}$ on $U\mathbb{H}$ will be a flare domain.
	By simply applying $U$ to any point in $\mathcal{F}_r$ and noting that it has norm between 1 and $\kappa$, we can confirm that $U(\mathcal{F}_r)$ is a flare domain.
	This is indeed the case, as seen in Figure \ref{UFr}.
	\begin{figure}[h]
		\centering
		\includegraphics[width=\linewidth]{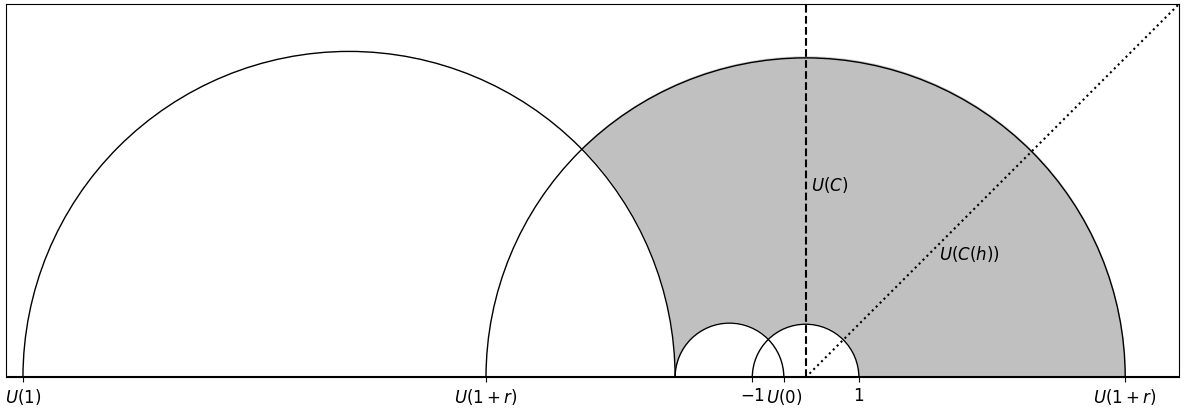}
		\caption{$U(\mathcal{F}_r)$ with $\mathcal{F}_r$ as in Figure \ref{FrWithCs}}
		\label{UFr}
	\end{figure}
\end{proof}

In Section \ref{sec:IntroFlares}, we parameterized a flare both by its width $\kappa$ and its cutoff angle $\alpha$.
To picture the cutoff angle in this context, we first define the curve $C(h)$ to be the arc of the circle centered at $\frac{1}{2} - ih$ ($h > 0$) which passes through $z_1$ and $z_2$.
Using classical geometry, it can be shown that the acute angle $C(h)$ makes with the real axis is
$$
\alpha = \frac{\pi}{2} - \arctan\left( \frac{h}{\sqrt{\frac{1}{4} - r^2}} \right)
$$
Next, since M\"obius transformations are conformal maps, the angle $C(h)$ makes with the real axis in the original domain is the same as the angle $U(C(h))$ makes.
So the angle $\alpha$ defined above becomes exactly the cutoff angle of the flare.
Note that if we have a specific value of $\alpha$ in mind, it would be straightforward to solve for $h > 0$ in this equation.
Moreover, we saw in the proof of Lemma \ref{hecke_flare_lemma} that the scaling parameter $\kappa$ is also easily described.
$$
\kappa = U(1-r) = \left(\frac{z_2}{r}\right)^2
$$
Since trace is conjugation invariant, we could also compute $\kappa$ by noting that
$$
\frac{1}{r} = \text{tr}(A) = \text{tr}(UAU^{-1}) = \sqrt{\kappa} + \frac{1}{\sqrt{\kappa}}
$$

\subsection{Choosing Test Points for Multiple Expansions}

In Section \ref{sec:TestPointsCusp}, we discussed choosing test points with respect to a cuspidal expansion.
In Section \ref{sec:TestPointsFlare}, we did the same for flare expansions.
The infinite volume Hecke groups discussed in this section exhibit both expansions.
To handle this, we simply choose a set of test points with respect to each expansion; our full set of test points is then the concatenation of these sets.

In Figure \ref{test_points_orig}, we display the original fundamental domain which exhibits a width 1 cusp at infinity.
\begin{figure}
	\centering
	\includegraphics[width=.7\linewidth]{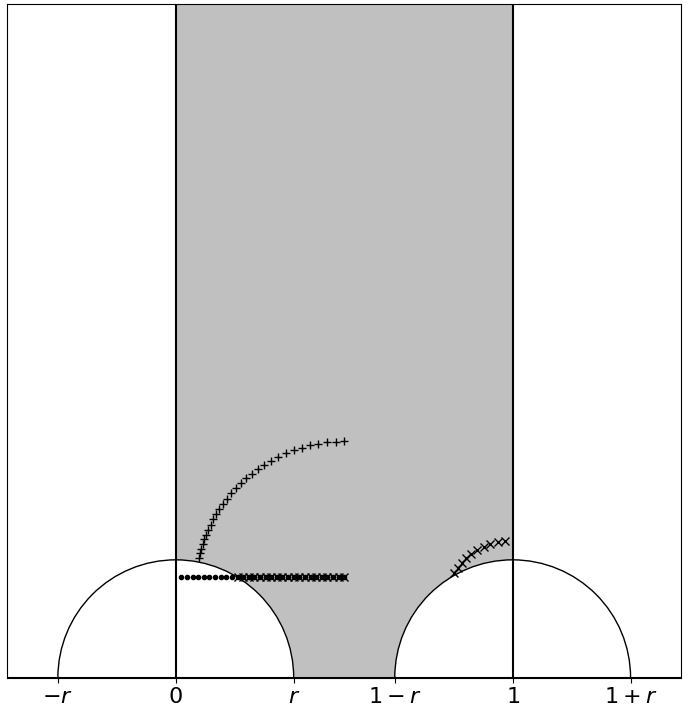}
	\caption{Test points in the original fundamental domain. A horocycle of height $3/10$ is shown as a set of points, and their pullbacks are shown as $\times$'s. The points from a ray of angle $\theta = 2.2$ in the flare expansion are shown as $+$'s.}
	\label{test_points_orig}
\end{figure}
Choosing test points with respect to the cuspidal expansion amounts to taking points along a low-lying horocycle.\footnote{we take half the horocycle because infinite volume Hecke groups admit a double cover by a reflection group, as seen in the proof of Lemma \ref{fund_Hecke}; this group includes reflection across the geodesic $\{\text{Re}(z) = 1/2\}$}
We choose the horocycle so that some test points fall outside the fundamental domain and some fall inside.
Recall from Section \ref{sec:MultiExpansion} that in the presence of multiple expansions, test points may be taken inside the fundamental domain.
These provide equations for our linear system by equating two different Fourier expansions at each such point.

We also note in Figure \ref{test_points_orig} that all test points - both those coming from a low-lying horocycle and those from a ray - are bounded away from the real line.
This is necessary for all points to be admissible with respect to the cuspidal expansion.

In Figure \ref{test_points_flare}, we show the same fundamental domain and test points after mapping to a flare.
\begin{figure}
	\centering
	\includegraphics[width=\linewidth]{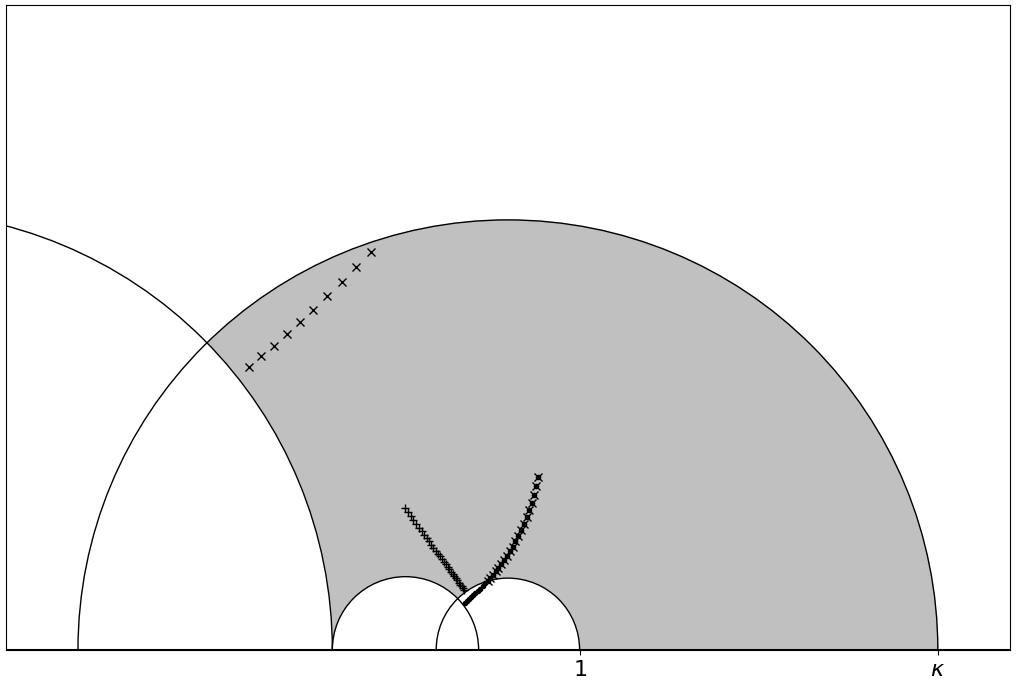}
	\caption{This is the same as Figure \ref{test_points_orig} after mapping to a flare domain.}
	\label{test_points_flare}
\end{figure}
The test points chosen with respect to the flare domain are along a ray of fixed angle (we stop at $\sqrt{\kappa}$ instead of $\kappa$ for the same reason we took half a horocycle: the double cover by a reflection group).
Both these points and those coming from the low-lying horocycle in the original domain are bounded away from an argument of $\pi$, so again all points are admissible with respect to the flare expansion.

In summary, we chose test points with respect to both expansions and both inside and outside of the fundamental domain.
Such variety ensures that rich group information is present in the linear system which appears in Hejhal's algorithm, thereby aiding in convergence towards a true Maass form.


\section{Results}\label{ch:Results}

In this section, we present results from experiments on Schottky groups and infinite volume Hecke groups.
A motivator for choosing these specific examples is that they were studied by McMullen \cite{mcmullen1998hausdorff}.
(Similar results are obtained by Jenkinson and Pollicott \cite{jenkinson2002calculating} and more recently by Pollicott and Vytnova \cite{pollicott2020hausdorff}).
In his paper, McMullen presents an algorithm for computing the Hausdorff dimension of the limit sets of certain Kleinian groups and Julia sets of rational maps.
As discussed in the introduction, the Hausdorff dimension $\delta$ is related to the bottom eigenvalue $\lambda_0$ of the Laplacian by the formula
\begin{equation}\label{delta_lambda}
	\lambda_0 = \delta(1 - \delta)
\end{equation}
Thus, McMullen's algorithm computes both $\delta$ and $\lambda_0$.

The extension of Hejhal's algorithm presented in this paper has both advantages and disadvantages when compared to McMullen's algorithm.
One advantage of McMullen's work is its applicability to cases in which the group in question does not admit Maass forms.
Equation \ref{delta_lambda} presupposes the existence of a base eigenfunction; we will see this issue come up below when discussing results for Schottky groups.

We also note here a few distinct advantages of using Hejhal's algorithm.
First, it computes more information about the group.
Our algorithm estimates the Fourier coefficients of the Maass form (thus estimating the Maass form itself), and it can be used to compute \textit{all} eigenvalues of the Laplacian - not just the bottom eigenvalue.
Another advantage of Hejhal's algorithm over McMullen's is efficiency.
In cases where both algorithms apply, our extension of Hejhal's algorithm computes the Hausdorff dimension to a much higher accuracy without sacrificing speed (see high precision examples in the GitHub code).

\subsection{Verifying Results}

In \cite{booker2006effective}, Booker, Str\"ombergsson, and Venkatesh consider the effective computation and \textit{verifaction} of Maass forms for $\text{SL}(2, \mathbb{Z})$ using Hejhal's algorithm.
They further consider some difficulties in extending their results to congruence subgroups.
For our infinite volume examples, there is an even larger gap in the knowledge required to obtain a result like theirs.
For example, most of the groups we consider are non-algebraic, so there are no Hecke relations which may be checked on the Fourier coefficients.

On the other hand, there is a plethora of heuristic evidence which may be used to verify our results.
The simplest example of this is that the algorithm utilizes least squares on an \textit{overdetermined} system.
A random overdetermined system has zero probability of having a solution, so whenever our algorithm converges at all, we can be reasonably certain that the convergence is towards a true eigenvalue-eigenfunction pair.

Moreover, in specific examples, we may look at facts about the Maass form which were not used to set up Hejhal's algorithm and check that they still hold for the function we computed.
For example, we saw in Lemma \ref{nonneg} that the base eigenfunction can be taken to be nonnegative.
However, there is nothing in our version of Hejhal's algorithm which forces the resulting function to have this property.
If the approximated Maass form ends up being nonnegative, that is another heuristic which strengthens our confidence in the result.

For the rest of this section, we look at results for groups that were studied by McMullen.
He used a totally different algorithm to compute the Hausdorff dimension of limit sets.
Therefore, wherever our results match his, we gain confidence in the accuracy of our own method.

\subsection{Results on Schottky Groups}\label{sec:resultsSchottky}

In \cite{mcmullen1998hausdorff} Table 12, McMullen lists the Hausdorff dimension of the limit set of every symmetric Schottky group with angles $\theta = 1, 2, 3, \dots, 120$ (in degrees).
To utilize our algorithm for the same purpose, we require the existence of Maass forms on the group.
A Lax-Phillips argument shows that $L^2$ functions can only occur for Hausdorff dimensions greater than $1/2$. Therefore, we only test on values of $\theta$ greater than or equal to $94$ degrees; this is the cutoff point for which the dimension exceeds $1/2$, as seen in McMullen's table.

Our own results for three symmetric Schottky groups are listed below to 10 decimal points of accuracy (see Appendix \ref{ch:Appx_Results} for the full table).
Indeed, the Hausdorff dimensions match McMullen's to 8 decimal points (this being the accuracy given in his paper).
\begin{table}[h!]
	\centering
	\begin{tabular}{|c|c|c|c|}
	\hline
	$\theta$ & 94 & 95 & 96 \\
	\hline
	$\delta$ & 0.5063972405 & 0.5155835572 & 0.5250520005 \\
	\hline
	$b_0$ & 1.0 & 1.0 & 1.0 \\
	\hline
	$b_1$ & -2.5166508538E-7 & -1.7300804172E-7 & -1.1617826121E-7 \\
	\hline
	$b_2$ & 1.1198755812E-14 & 4.8519808552E-15 & 1.9929273692E-15 \\
	\hline
	$b_3$ & -2.0726623244E-22 & -4.2399625678E-23 & -6.1816973015E-24 \\
	\hline
	$b_4$ & -2.3468239788E-29 & -5.5134294682E-30 & -1.141094769E-30 \\
	\hline
	$b_5$ & 3.4777754033E-36 & 4.8036304002E-37 & 5.505497463E-38 \\
	\hline
	$b_6$ & -2.7250500511E-43 & -2.7978189536E-44 & -1.7376105471E-45 \\
	\hline
	$b_7$ & -6.1842978186E-51 & -7.4067873863E-51 & 5.4124177087E-53 \\
	\hline
	$b_8$ & -2.3465059548E-57 & -4.0643898947E-57 & 2.1949093323E-59 \\
	\hline
	$b_9$ & 4.5291057657E-64 & 1.7973473077E-64 & 9.3505260894E-66 \\
	\hline
	\end{tabular}
	
	\caption{Hausdorff dimensions and first 10 Fourier coefficients of the base eigenfunction for various symmetric Schottky groups. The eigenfunctions are scaled so that the $0^{th}$ coefficient is 1.}
	\label{schottky_table}
	
\end{table}

\subsection{Results on Hecke Groups}

In \cite{mcmullen1998hausdorff}, McMullen also discusses the Hausdorff dimension of the limit sets of infinite volume Hecke groups.
Just as in the case of symmetric Schottky groups, we are only able to use our extension of Hejhal's algorithm in cases where the dimension exceeds $1/2$.
On the other hand, our algorithm simultaneously computes the Hausdorff dimension along with the Fourier coefficients for both the cuspidal and the flare expansions.
We include these results for a few example values of $r$ below.

McMullen's eigenvalue algorithm is implemented in PARI \cite{PARI2} and is available on his website\footnote{\url{https://people.math.harvard.edu/~ctm/programs/index.html}} as the program \texttt{hdim}.
We note that in McMullen's code, he parameterizes infinite volume Hecke groups by the value
$$
R = 2r
$$
where $r$ is the parameter defined for us in Section \ref{ch:Hecke}.
This must be taken into account when checking outputs from \texttt{hdim} against our own results.

\begin{table}[h!]
	\centering
	\begin{tabular}{|c|c|c|c|}
	\hline
	$r$ & 0.35 & 0.40 & 0.45 \\
	\hline
	$\delta$ & 0.767052417 & 0.8169416563 & 0.8782699772 \\
	\hline
	$a_0$ & 1.0 & 1.0 & 1.0 \\
	\hline
	$a_1$ & 1.3915582132 & 1.511667648 & 1.383662851 \\
	\hline
	$a_2$ & 1.042527677 & 0.5557801545 & -0.4018405456 \\
	\hline
	$a_3$ & 0.4185419039 & -0.2633463357 & -0.1590048682 \\
	\hline
	$a_4$ & 0.0082837207 & 0.0563845298 & 1.2672198274 \\
	\hline
	$b_0$ & 0.5693837173 & 0.5509892722 & 0.5168430765 \\
	\hline
	$b_1$ & -8.6593303021E-5 & -3.8018774819E-6 & -4.2062834491E-9 \\
	\hline
	$b_2$ & 2.7659692148E-10 & -2.0592480594E-13 & -1.1328909449E-18 \\
	\hline
	$b_3$ & -2.1134025239E-14 & -2.2707863778E-18 & 2.1364421568E-26 \\
	\hline
	$b_4$ & 4.6505587029E-19 & 8.2512318578E-25 & 8.6040972875E-33 \\
	\hline
	\end{tabular}
	\caption{Hausdorff dimensions and first 5 cuspidal and flare coefficients of the base eigenfunction for various infinite volume Hecke groups. The eigenfunctions are scaled so that the $0^{th}$ cuspidal coefficient is 1.}
	\label{hecke_table}
	
\end{table}


\bibliographystyle{alpha}
\bibliography{refs.bib}

\begin{appendices}
	
	\section{Code Details}\label{ch:Appx_Code}
	
	The code stored at \url{https://github.com/alexkarlovitz/hejhal} was used to produce all of the results and figures presented in this paper.
	In this appendix, we single out a few of the files to make navigation of the GitHub repository easier.
	
	The folder ``Visuals'' contains the code for making figures.
	This was written in Python and makes use of the Python library Matplotlib.
	Of particular note is the file poincareModel.py, which defines the \texttt{Geodesic} and \texttt{FundamentalDomain} classes for the Poincar\'e upper half plane model of hyperbolic space.
	A \texttt{FundamentalDomain} object comprises of a list of \texttt{Geodesic} objects, and it implements a function to plot the fundamental domain.
	The file diskModel.py defines the same classes in the disk model.
	
	The folder ``Algorithms'' contains the implementations of our algorithm on various examples.
	This work is all written in PARI/GP, a computer algebra system for arbitrary precision computing \cite{PARI2}.
	The file cover\_Schottky.pari contains code for the symmetric Schottky groups discussed in Section \ref{ch:Schottky}.
	The function \texttt{secant\_method} implements the algorithm using the secant method technique for updating eigenvalue guesses, which we discussed in Section \ref{sec:HejhalOriginal}.
	We also have various examples of how to run this function in the file test\_cover\_Schottky.pari.
	The files cusp\_flare\_Hecke.pari and test\_cusp\_flare\_Hecke.pari do the same for the infinite volume Hecke groups that were the focus of Section \ref{ch:Hecke}.
	
	\section{Future Research Directions}\label{ch:Appx_Future}
	
	As described in this work, our extension of Hejhal's algorithm should apply to any group acting discontinuously on hyperbolic space so long as it contains a hyperbolic element.
	However, we saw in Sections \ref{ch:Schottky} and \ref{ch:Hecke} that many complications arise in putting the theory to practice.
	In this appendix, we describe two situations to which it would be interesting to apply our algorithm.
	
	\subsection{Hejhal's Algorithm in Cocompact Fundamental Domains}\label{sec:cocompact}
	
	As discussed in Section \ref{sec:IntroFlares}, our algorithm should apply to any finitely generated Zariski dense Fuchsian group for $\text{SL}(2, \mathbb{R})$, since it only requires the existence of a hyperbolic matrix.
	However, if the group is cocompact (meaning it has a compact fundamental domain), there are some major differences from the cases we looked at in this thesis.
	These differences mainly stem from the fact that the group no longer admits a flare domain.
	
	Let us briefly consider an interesting example which is discussed by Kontorovich in the context of Dirichlet domains \cite{kontorovich2011}.
	We define the group $\Gamma$ to be the set of determinant 1 matrices with entries in the ring of integers $\mathcal{O}_K = \mathbb{Z}[\sqrt{3}]$ of the number field $K = \mathbb{Q}[\sqrt{3}]$.
	A fundamental domain for $\Gamma\backslash\mathbb{H}$ is shown in Figure \ref{cc_FD}.
	\begin{figure}[h]
		\centering
		\includegraphics[width=\linewidth]{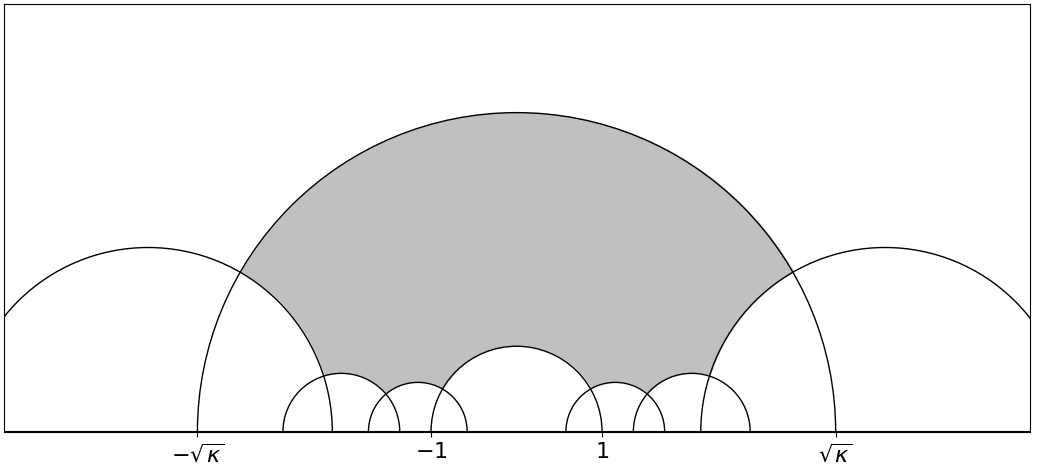}
		\caption{Fundamental domain for the action of the arithmetic group $\Gamma$ on $\mathbb{H}$; here, $\kappa = \frac{2 + \sqrt{3}}{2 - \sqrt{3}}$}
		\label{cc_FD}
	\end{figure}
	Note that $\Gamma$ contains the nontrivial diagonal matrix
	$$
	A =
	\begin{pmatrix}
		2 + \sqrt{3} & 0 \\
		0 & 2 - \sqrt{3}
	\end{pmatrix}
	$$
	Therefore, a Maass form for $\Gamma$ exhibits invariance under the map $z \mapsto \frac{2 + \sqrt{3}}{2 - \sqrt{3}}z$.
	This invariance is enough to give the flare expansion in Lemma \ref{flare_lemma}, and therefore enough to run our version of Hejhal's algorithm.
	
	The example considered here is of particular interest because of its relation to the Jacquet-Langlands correspondence.
	This correspondence relates the spectrum of the Laplacian for cocompact congruence hyperbolic surfaces to that of congruence covers of $\text{SL}(2, \mathbb{Z})\backslash\mathbb{H}$ (see \cite{bergeron2016spectrum} Chapter 8).
	For our particular example, this result suggests that the eigenvalues of Maass forms for $\Gamma$ coincide with those of the congruence subgroup $\Gamma_0(3)$.
	
	There is still some work required to apply Hejhal's algorithm to the case of cocompact Fuchsian groups.
	For example, we use the fact that flare domains have infinite volume in our proof of Lemma \ref{flare_lemma}; it is conceivable that some of the Legendre-$Q$ terms must be included in the finite volume case (particularly, we suspect, for the $n = 0$ term).
	Another consideration is if cocompactness affords any extra structure to the Fourier expansion.
	As we saw in the case of base eigenfunctions in Section \ref{sec:BaseEfuncs}, including such extra structure in Hejhal's algorithm can be key to obtaining convergence.
	
	\subsection{Hejhal's Algorithm in Hyperbolic 3-Space}\label{sec:H3}
	
	In the upper half space model of hyperbolic 3-space (which we denote $\mathbb{H}^3$), the group of orientation-preserving isometries is isomorphic to $\text{PSL}(2, \mathbb{C})$.
	The action is given by M\"obius transformations, where we interpret division as multiplication by the inverse in the quaternions:
	$$
	\begin{pmatrix}
		\alpha & \beta \\
		\gamma & \delta
	\end{pmatrix}(z) =
	(\alpha z + \beta)(\gamma z + \delta)^{-1}
	$$
	($z^{-1} = \bar{z}/|z|^2$).\footnote{see \url{http://www.maths.qmul.ac.uk/~sb/LTCCcourse/Holodyn2013notes_week3.pdf} for a good treatment of this material}
	Just as in the 2-dimensional case, we are interested in Zariski dense, geometrically finite groups $\Gamma \subset \text{PSL}(2, \mathbb{C})$ acting discretely on $\mathbb{H}^3$ with Hausdorff dimension exceeding 1 so that the Patterson-Sullivan formula $\lambda_0 = \delta(2 - \delta)$ holds.
	We also require that the group contains a hyperbolic element.
	(If $\Gamma$ contains only strictly loxodromic elements - that is, matrices with non-real trace of norm greater than 2 - then we cannot separate scaling and rotation in the Fourier expansion below).
	In this case, $\Gamma$ exhibits the analog of a flare domain in 3 dimensions.
	
	To find this analog, one starts with the formula for the Laplacian in the upper half space model
	$$
	\Delta = -y^2\left( \frac{\partial^2}{\partial x_1^2} + \frac{\partial^2}{\partial x_2^2} + \frac{\partial^2}{\partial y^2} \right) + y\frac{\partial}{\partial y}
	$$
	Then, after conjugation, we can assume the group admits invariance under the map $z \mapsto \kappa z$.
	This allows one to find a Fourier expansion for Maass forms in the same spirit as that given in Lemma \ref{flare_lemma}.
	\begin{lem}
		Let $\Gamma \subset \text{PSL}(2, \mathbb{C})$ be a group acting discontinuously on $\mathbb{H}^3$ which contains the matrix
		$\begin{psmallmatrix}
			\sqrt{\kappa} & 0 \\
			0 & \sqrt{\kappa}^{-1}
		\end{psmallmatrix}$ for some $\kappa > 1$.
		Let $f(r, \theta, \phi)$ be a Maass form for $\Gamma$ written in spherical coordinates on the upper half space model.
		Then $f$ has a Fourier expansion of the form
		\begin{equation}\label{H3_fourier}
			f(r, \theta, \varphi) = \sum_{m, n \in \mathbb{Z}}g_{m, n}(\varphi)e^{im\theta}e\left(n\frac{\log r}{\log\kappa}\right)
		\end{equation}
		where for each pair $(m, n)$, the function $g_{m, n}(\varphi)$ satisfies a hypergeometric differential equation.
	\end{lem}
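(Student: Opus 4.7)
The plan is to mimic the proof of Lemma \ref{flare_lemma}, adapting each step to three dimensions. The key observation is that in spherical coordinates $(r, \theta, \varphi)$ on the upper half space model, where $\theta$ is the azimuthal angle around the vertical axis and $\varphi$ is the polar angle from the vertical, any smooth function is automatically $2\pi$-periodic in $\theta$. This gives a Fourier series $f = \sum_{m} h_m(r, \varphi) e^{im\theta}$ with no assumption on $\Gamma$ required.

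Next I would use the hypothesis that $\Gamma$ contains the diagonal matrix to obtain invariance under $z \mapsto \kappa z$. In spherical coordinates this scaling is simply $r \mapsto \kappa r$ (leaving $\theta$ and $\varphi$ fixed), so each $h_m(r, \varphi)$ is periodic in $\log r$ with period $\log \kappa$, yielding a second logarithmic Fourier expansion. Combining the two gives exactly
\[
f(r, \theta, \varphi) = \sum_{m, n \in \mathbb{Z}} g_{m,n}(\varphi) e^{im\theta} e\!\left( n \frac{\log r}{\log \kappa} \right).
\]

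To identify the ODE satisfied by $g_{m,n}$, I would first convert the Laplacian from rectangular to spherical coordinates via the chain rule on the formula $\Delta = -y^2(\partial_{x_1}^2 + \partial_{x_2}^2 + \partial_y^2) + y\partial_y$, using $y = r\cos\varphi$. After simplification, $\Delta$ becomes a second-order operator in $(r, \theta, \varphi)$ whose coefficients depend only on $\varphi$; the overall scaling factor is $-\cos^2\varphi$ (analogous to the $-\sin^2\theta$ of the polar Laplacian in Equation (\ref{polarL-B})). Applying $\Delta f = \lambda f$ term by term and using uniqueness of Fourier coefficients turns $\partial_r$ into multiplication by $2\pi i n/\log\kappa$ (up to $1/r$) and $\partial_\theta$ into multiplication by $im$, reducing the eigenvalue equation to a linear second-order ODE for $g_{m,n}(\varphi)$ with constant coefficients multiplied by rational functions of $\cos\varphi$ and $\sin\varphi$.

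The main obstacle \emph{and} the heart of the argument is the final step: carrying out a change of variable (e.g.\ $u = \sin^2(\varphi/2)$ or $u = \tfrac{1}{2}(1 - \cos\varphi)$) to bring this ODE into the standard form
\[
u(1-u) G'' + \bigl(c - (a+b+1)u\bigr) G' - a b \, G = 0
\]
of the Gauss hypergeometric equation, with parameters $a, b, c$ depending explicitly on $\lambda$, $m$, $n$, and $\log\kappa$. This is a calculation entirely analogous to the reduction of the polar-Laplacian ODE to the Legendre equation in Lemma \ref{flare_lemma}, but bookkeeping the indicial exponents at $\varphi = 0, \pi/2$ (which become the regular singular points $u = 0, 1$) is where care is required; the remaining singularity at $u = \infty$ forces the equation to be hypergeometric rather than a more general Heun equation. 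As in the 2D case, the subsequent step (not claimed here but relevant for the algorithm) would be to impose $L^2$-admissibility at $\varphi \to 0$ to rule out one of the two independent solutions.
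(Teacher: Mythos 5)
Your outline matches the approach the paper intends: the paper itself offers no proof of this lemma (it sits in the future-research appendix, and the surrounding text only sketches the strategy of writing the upper-half-space Laplacian, invoking invariance under $z \mapsto \kappa z$, and imitating Lemma \ref{flare_lemma}), so your proposal is essentially a filled-in version of that same sketch, and the two-variable Fourier expansion part is complete and correct as you state it. One small correction to your final step: with $\varphi$ measured from the vertical axis, the singular points of the $\varphi$-ODE sit at $\varphi = 0$ (where the $m^2/\sin^2\varphi$ term from $\partial_\theta^2$ blows up) and $\varphi = \pi/2$ (where the overall factor $\cos^2\varphi$ vanishes), so the substitution $u = \sin^2(\varphi/2)$ sends these to $u = 0$ and $u = 1/2$ rather than to $0$ and $1$; the natural choice is $u = \sin^2\varphi$ or $u = \cos^2\varphi$. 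Relatedly, the assertion that the equation must be hypergeometric rather than Heun is not automatic from ``the remaining singularity at $u = \infty$'' --- you have to actually carry out the substitution and verify there are exactly three regular singular points on the Riemann sphere; that verification is precisely the computation you (and the paper) defer.
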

	Finding the specific form of the functions $g_{m, n}$ would be the first step toward extending our algorithm to 3 dimensions.
	
	\section{Extended Results for Schottky Groups}\label{ch:Appx_Results}
	
	Here, we continue Table \ref{schottky_table} from Section \ref{sec:resultsSchottky}.
	
	\begin{longtable}{|c|c|c|c|}
		\hline
		$\theta$ & 94 & 95 & 96 \\
		\hline
		$\delta$ & 0.5063972405 & 0.5155835572 & 0.5250520005 \\
		\hline
		$b_0$ & 1.0 & 1.0 & 1.0 \\
		\hline
		$b_1$ & -2.5166508538E-7 & -1.7300804172E-7 & -1.1617826121E-7 \\
		\hline
		$b_2$ & 1.1198755812E-14 & 4.8519808552E-15 & 1.9929273692E-15 \\
		\hline
		$b_3$ & -2.0726623244E-22 & -4.2399625678E-23 & -6.1816973015E-24 \\
		\hline
		$b_4$ & -2.3468239788E-29 & -5.5134294682E-30 & -1.141094769E-30 \\
		\hline
		$b_5$ & 3.4777754033E-36 & 4.8036304002E-37 & 5.505497463E-38 \\
		\hline
		$b_6$ & -2.7250500511E-43 & -2.7978189536E-44 & -1.7376105471E-45 \\
		\hline
		$b_7$ & -6.1842978186E-51 & -7.4067873863E-51 & 5.4124177087E-53 \\
		\hline
		$b_8$ & -2.3465059548E-57 & -4.0643898947E-57 & 2.1949093323E-59 \\
		\hline
		$b_9$ & 4.5291057657E-64 & 1.7973473077E-64 & 9.3505260894E-66 \\
		\hline
		~ & ~ & ~ & ~ \\
		\hline
		$\theta$ & 97 & 98 & 99 \\
		\hline
		$\delta$ & 0.5348189358 & 0.5449022229 & 0.5553214236 \\
		\hline
		$b_0$ & 1.0 & 1.0 & 1.0 \\
		\hline
		$b_1$ & -7.6013378206E-8 & -4.8314592354E-8 & -2.9729974038E-8 \\
		\hline
		$b_2$ & 7.7120365664E-16 & 2.7910594923E-16 & 9.3644442708E-17 \\
		\hline
		$b_3$ & -1.1629968189E-25 & 3.240903982E-25 & 1.4463862455E-25 \\
		\hline
		$b_4$ & -2.0560710306E-31 & -3.2250222589E-32 & -4.3389494219E-33 \\
		\hline
		$b_5$ & 5.6269123662E-39 & 4.831982001E-40 & 3.4232144892E-41 \\
		\hline
		$b_6$ & -8.8541082608E-47 & -4.1736843832E-48 & -2.2467648948E-49 \\
		\hline
		$b_7$ & 2.7068359385E-54 & 6.7122586307E-56 & -3.0306326758E-58 \\
		\hline
		$b_8$ & 1.0664361431E-61 & 5.4886600578E-63 & 1.5496208223E-64 \\
		\hline
		$b_9$ & 2.4678823261E-68 & 4.9079480835E-70 & -7.6094629613E-72 \\
		\hline
		~ & ~ & ~ & ~ \\
		\hline
		$\theta$ & 100 & 101 & 102 \\
		\hline
		$\delta$ & 0.5660980508 & 0.5772558693 & 0.5888212627 \\
		\hline
		$b_0$ & 1.0 & 1.0 & 1.0 \\
		\hline
		$b_1$ & -1.7639423974E-8 & -1.0043266514E-8 & -5.456246873E-9 \\
		\hline
		$b_2$ & 2.8821033506E-17 & 8.0301043236E-18 & 1.9919927194E-18 \\
		\hline
		$b_3$ & 4.1975548533E-26 & 9.4718196155E-27 & 1.7133977096E-27 \\
		\hline
		$b_4$ & -4.9802480099E-34 & -4.6307356889E-35 & -3.7448841415E-36 \\
		\hline
		$b_5$ & 1.849630679E-42 & 7.6430925784E-44 & 2.5955783828E-45 \\
		\hline
		$b_6$ & -2.5041130675E-51 & -1.3916147359E-52 & -1.3656888855E-53 \\
		\hline
		$b_7$ & 4.0025314834E-58 & 3.4046971526E-59 & -4.8792676259E-61 \\
		\hline
		$b_8$ & -1.8672502894E-66 & -8.1716268477E-67 & -8.5441042622E-69 \\
		\hline
		$b_9$ & 3.574917711E-73 & 2.5477503938E-74 & -6.6213464918E-77 \\
		\hline
		~ & ~ & ~ & ~ \\
		\hline
		$\theta$ & 103 & 104 & 105 \\
		\hline
		$\delta$ & 0.6008236865 & 0.6132962338 & 0.6262763513 \\
		\hline
		$b_0$ & 1.0 & 1.0 & 1.0 \\
		\hline
		$b_1$ & -2.8091147305E-9 & -1.3592086394E-9 & -6.117866583E-10 \\
		\hline
		$b_2$ & 4.3059867697E-19 & 7.8634405884E-20 & 1.1575941563E-20 \\
		\hline
		$b_3$ & 2.5470168364E-28 & 3.0122225886E-29 & 2.7838285274E-30 \\
		\hline
		$b_4$ & -2.1966625504E-37 & -1.0172542862E-38 & -3.4292418033E-40 \\
		\hline
		$b_5$ & 8.4731393595E-47 & 1.5448429246E-48 & 2.2652082015E-50 \\
		\hline
		$b_6$ & -1.6101058727E-56 & -9.03295402E-59 & -1.240719692E-60 \\
		\hline
		$b_7$ & -6.9814011287E-67 & -4.1657738196E-69 & -5.1850882962E-69 \\
		\hline
		$b_8$ & -3.983344528E-76 & 8.7894985482E-78 & 1.219232274E-79 \\
		\hline
		$b_9$ & 4.8710709698E-84 & -1.1631482431E-86 & 7.2144594008E-87 \\
		\hline
		~ & ~ & ~ & ~ \\
		\hline
		$\theta$ & 106 & 107 & 108 \\
		\hline
		$\delta$ & 0.6398067604 & 0.6539366615 & 0.6687233406 \\
		\hline
		$b_0$ & 1.0 & 1.0 & 1.0 \\
		\hline
		$b_1$ & -2.5291540896E-10 & -9.4495240438E-11 & -3.1254026951E-11 \\
		\hline
		$b_2$ & 1.2579941054E-21 & 7.9104214829E-23 & -1.4847943528E-24 \\
		\hline
		$b_3$ & 1.9451181016E-31 & 9.798116181E-33 & 3.3450030796E-34 \\
		\hline
		$b_4$ & -6.1528001369E-42 & -1.2039748589E-43 & -7.2517362158E-46 \\
		\hline
		$b_5$ & 4.6733173301E-52 & 5.1108190527E-55 & 3.0778195335E-56 \\
		\hline
		$b_6$ & -1.2962178464E-61 & 2.582536327E-66 & -7.6195707277E-67 \\
		\hline
		$b_7$ & 2.6552201906E-70 & 1.1956725704E-76 & 3.5572546476E-76 \\
		\hline
		$b_8$ & 4.9493375274E-79 & 9.1018233297E-87 & 5.4814226659E-87 \\
		\hline
		$b_9$ & 9.3923926854E-89 & 1.3108018334E-96 & -4.2411477229E-96 \\
		\hline
		~ & ~ & ~ & ~ \\
		\hline
		$\theta$ & 109 & 110 & 111 \\
		\hline
		$\delta$ & 0.6842343612 & 0.7005506322 & 0.7177708377 \\
		\hline
		$b_0$ & 1.0 & 1.0 & 1.0 \\
		\hline
		$b_1$ & -8.905490517E-12 & -2.1078083478E-12 & -3.9408565331E-13 \\
		\hline
		$b_2$ & -9.0798851393E-25 & -9.2695203422E-26 & -4.6177556013E-27 \\
		\hline
		$b_3$ & 7.1254493292E-36 & 8.4731543192E-38 & 4.7860872619E-40 \\
		\hline
		$b_4$ & -4.8360048672E-48 & -9.6547812524E-51 & -1.0054790036E-53 \\
		\hline
		$b_5$ & -3.5921262497E-61 & -3.254837943E-64 & -2.0571402777E-66 \\
		\hline
		$b_6$ & -8.4531009525E-72 & 1.9270339967E-76 & 2.1383139366E-78 \\
		\hline
		$b_7$ & -8.835095445E-82 & 4.1506421257E-89 & -7.5887908411E-91 \\
		\hline
		$b_8$ & -2.8351626532E-92 & 1.0764680261E-98 & 4.311246004E-102 \\
		\hline
		$b_9$ & -1.1017111646E-102 & 1.8841276344E-110 & -1.0579301794E-116 \\
		\hline
		~ & ~ & ~ & ~ \\
		\hline
		$\theta$ & 112 & 113 & 114 \\
		\hline
		$\delta$ & 0.7360180662 & 0.7554501793 & 0.7762769358 \\
		\hline
		$b_0$ & 1.0 & 1.0 & 1.0 \\
		\hline
		$b_1$ & -5.4178437433E-14 & -4.9247119445E-15 & -2.5071014886E-16 \\
		\hline
		$b_2$ & -1.1159016934E-28 & -1.1069761375E-30 & -3.3016384588E-33 \\
		\hline
		$b_3$ & 1.0378217055E-42 & 6.2801367977E-46 & 6.3119677388E-50 \\
		\hline
		$b_4$ & -2.1495439013E-57 & -1.4686177078E-62 & 2.260854543E-66 \\
		\hline
		$b_5$ & 1.0245177302E-70 & -3.0934984328E-76 & 1.7310619203E-81 \\
		\hline
		$b_6$ & 9.1784411082E-83 & 1.8002141971E-89 & 3.8122640372E-96 \\
		\hline
		$b_7$ & 4.9151604494E-96 & 2.19079449E-103 & 3.8220683218E-111 \\
		\hline
		$b_8$ & 2.4417894817E-109 & 1.3494436105E-117 & 1.6223366286E-126 \\
		\hline
		$b_9$ & -2.2789932004E-121 & 6.144589999E-131 & 2.3495874332E-141 \\
		\hline
		~ & ~ & ~ & ~ \\
		\hline
		$\theta$ & 115 & 116 & 117 \\
		\hline
		$\delta$ & 0.7987903036 & 0.823423288 & 0.8508798156 \\
		\hline
		$b_0$ & 1.0 & 1.0 & 1.0 \\
		\hline
		$b_1$ & -5.418283051E-18 & -2.9946987419E-20 & -1.4682715922E-23 \\
		\hline
		$b_2$ & -1.7162852646E-36 & -5.7205412456E-41 & -1.4617186967E-47 \\
		\hline
		$b_3$ & 6.2989019301E-55 & 3.2457831213E-62 & 7.5982009863E-74 \\
		\hline
		$b_4$ & 3.0968327766E-72 & 3.3998460484E-83 & 2.651059256E-96 \\
		\hline
		$b_5$ & 9.5710565755E-89 & -1.1995692265E-101 & -1.262557898E-118 \\
		\hline
		$b_6$ & 6.0648983058E-105 & -3.830712769E-120 & 1.2426594636E-140 \\
		\hline
		$b_7$ & 1.119509805E-121 & -1.0033974478E-138 & -1.1253026592E-162 \\
		\hline
		$b_8$ & 3.9841535568E-138 & -2.6518406123E-157 & 9.5751944735E-185 \\
		\hline
		$b_9$ & 1.2651190276E-154 & -5.635630824E-176 & -7.540633254E-207 \\
		\hline
		~ & ~ & ~ & ~ \\
		\hline
		$\theta$ & 118 & 119 & 120 \\
		\hline
		$\delta$ & 0.8824840727 & 0.9215247961 & 1.0 \\
		\hline
		$b_0$ & 1.0 & 1.0 & 1.0 \\
		\hline
		$b_1$ & -4.1125246082E-29 & -1.2136119956E-41 & 0.0 \\
		\hline
		$b_2$ & -1.1900011198E-58 & -1.0415524924E-83 & 0.0 \\
		\hline
		$b_3$ & -6.794358068E-89 & -3.0935852244E-126 & 0.0 \\
		\hline
		$b_4$ & -5.3083424343E-119 & -1.5403528265E-168 & 0.0 \\
		\hline
		$b_5$ & 3.110564025E-148 & 2.2937221356E-208 & 0.0 \\
		\hline
		$b_6$ & -1.2034894694E-176 & 1.551286008E-248 & 0.0 \\
		\hline
		$b_7$ & 8.2458855624E-205 & -1.3164901303E-289 & 0.0 \\
		\hline
		$b_8$ & 3.9051264579E-233 & -5.9952485673E-329 & 0.0 \\
		\hline
		$b_9$ & -4.6050893399E-261 & -4.3814548545E-369 & 0.0 \\
		\hline
		
		\caption{Hausdorff dimensions and first 10 Fourier coefficients of the base eigenfunction for various symmetric Schottky groups. The eigenfunctions are normalized so that the $0^{th}$ coefficient is 1. Note that for $\theta = 120$ degrees, the group is finite volume; this implies that constant functions are $L^2$, and they have eigenvalue $0$.}
		\label{schottky_table_long}
		
	\end{longtable}
	
\end{appendices}

\end{document}